%

\documentclass
{amsart}
\usepackage{times}
\usepackage{amsfonts}
\usepackage{amsmath}
\usepackage{amscd}
\usepackage{amssymb}
\usepackage{amsxtra}
\usepackage{latexsym}
\usepackage{amsthm}
\usepackage[bookmarks,colorlinks,pagebackref]{hyperref} 

\newcommand \Exactseq [3]{0\to {#1}\to {#2}\to {#3}\to 0}

\newcommand \ann[2]{\operatorname{Ann}_{#1}(#2)}

\newcommand \CM{Coh\-en-Mac\-au\-lay}

\renewcommand \hom [3]{\operatorname{Hom}_{#1}(#2,#3)} 
\newcommand \homo{homomorphism}
\newcommand \id{\mathfrak a}
\renewcommand\iff{if and only if}

\newcommand \inverse[2]{{#1^{-1}(#2)}}
\newcommand \iso{\cong}

\newcommand \maxim{\mathfrak m}
\newcommand \nat{\mathbb N}

\newcommand \op\operatorname
\newcommand \pol[2]{#1[#2]}
\newcommand \pow[2]{#1[[#2]]}

\newcommand \pr{\mathfrak p}
\newcommand \primary{\mathfrak g}
\newcommand \range [2]{#1,\dots,#2}
\let\sub\subseteq

\newcommand \zet{\mathbb Z}

\newcommand\en{\wedge}
\newcommand  \topc{weakly compressible}

\newcommand\val[1]{\op{val}(#1)}

\newcommand\aleq{\preceq}
\newcommand\binord[1]{\mathfrak o({#1})}
\newcommand\fl[1]{\mathfrak{#1}}
\newcommand\ndo[1]{\op{End}(#1)}

\newcommand\add{equilateral}

\newcommand\order[2]{\op{ord}_{#1}(#2)}
\newcommand\lc[2]{\op{H}^0_{#2}(#1)}
\newcommand\lcl[2]{e_{#1}(#2)}
\newcommand\ord{\mathbf O}

\newcommand\low[1]{\op{ord}(#1)}
\newcommand\Ssum{\bigoplus}
\newcommand\ssum{\oplus}

\newcommand \len[1]{\op{len}(#1)}
\newcommand \genlen[2]{\ell^{\text{gen}}_{#1}(#2)}
\newcommand \lenmod[2]{\op{len}_{#1}(#2)}

\newcommand \fcyc[2]{\texttt{cyc}_{#1}(#2)}

\newcommand  \sch{Grassmanian}
\newcommand  \hdim{height rank}

\newcommand  \hd[1]{\mathfrak l(#1)}
\newcommand  \cd[2]{\binord{\fcyc{#1}{#2}}}
\newcommand  \cohrk[1]{\op{coh}(#1)}
\newcommand\finlen[1]{\op {len}^{\text{fin}}(#1)}

\newcommand \gr[1]{\mathfrak {Gr}(#1)}
\newcommand \grass[2]{\mathfrak {Gr}_{#1}(#2)}


\theoremstyle{plain}

\newtheorem*{conjectuur*}{Conjecture}

\swapnumbers
\newtheorem{theorem}[subsection]{Theorem}
\newcommand\Thm[1]{Theorem~\ref{#1}}
\newtheorem{corollary}[subsection]{Corollary}
\newcommand\Cor[1]{Corollary~\ref{#1}}
\newtheorem{lemma}[subsection]{Lemma}
\newcommand\Lem[1]{Lemma~\ref{#1}}
\newtheorem{proposition}[subsection]{Proposition}
\newcommand\Prop[1]{Proposition~\ref{#1}}

\newtheorem*{maintheorem}{Main Theorem}

\newtheorem*{citedtheorem}{Theorem}

\theoremstyle{definition}

\newtheorem{example}[subsection]{Example}

\theoremstyle{remark}

\newtheorem{remark}[subsection]{Remark}
\newcommand\Rem[1]{Remark~\ref{#1}}

\swapnumbers

\title {Ordinal length and the canonical topology}
\author{Hans Schoutens}
\date\today
\address{Department of Mathematics\\
365 5th Avenue\\
the CUNY Graduate Center\\
New York, NY 10016, USA}

\begin{document}
\begin{abstract} 
We extend the classical length function to an ordinal-valued invariant on the class of all finite-dimensional Noetherian modules.
We show how to calculate this combinatorial invariant by means of the fundamental cycle of the  module, thus linking the lattice of submodules to homological properties of the module. Using this, we define on a module its canonical topology, in which every morphism is continuous.
\end{abstract}
\keywords{Commutative Algebra; ordinal length; local cohomology; Chow group; local multiplicity}

\maketitle



\section{Introduction}
The purpose of this paper is to lay the foundations of a new, ordinal-valued invariant in Commutative Algebra: the 
 (\emph{ordinal}) \emph{length} $\len M$ of a Noetherian module $M$, measuring (by means of an ordinal) the longest descending chain of submodules in $M$. 
Recall that an \emph{ordinal} is an isomorphism class of a total well-order (=admitting the descending chain condition), and the class of ordinals, ordered by the initial segment relation, is again a well-order; each natural number is an ordinal by identifying it with a chain of that length; the order of $(\nat,\leq)$ is denoted $\omega$ and is equal to the supremum of all $n\in\nat$; the order-type of the  lexicographical order on $\nat^d$ is denoted $\omega^d$. Apart from the usual (non-commutative) sum $\alpha+\beta$, we also need the (commutative) \emph{shuffle} or \emph{natural} sum $\alpha\ssum\beta$ given by coefficient-wise addition in the Cantor normal form (for details, see \S\ref{s:Ord}). Since descending chains in a Noetherian module $M$ are well-ordered with respect to the \emph{reverse} inclusion, they are ordinals, and we define $\len M$ as the supremum of all  ordinals obtained  this way. 

 In \cite{SchSemAdd} we showed that, whereas length can no longer be additive on exact sequences, it is still semi-additive:
 
 \begin{citedtheorem}[Semi-additivity]
If $\Exactseq NMQ$ is an exact sequence of Noetherian $R$-modules, then 
\begin{equation}\label{eq:lensemadd}
  \len Q+\len N\leq \len M\leq \len Q\ssum \len N.
\end{equation}
Moreover, if the sequence is split, then the last inequality is an equality.
\end{citedtheorem}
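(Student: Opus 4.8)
\emph{Proof proposal.} We may assume $N\subseteq M$ with $Q=M/N$, and write $\pi\colon M\onto Q$ for the quotient map; then $\pi^{-1}$ identifies the submodules of $Q$ order-isomorphically with the submodules of $M$ containing $N$. Both bounds in \eqref{eq:lensemadd} arise from relating a single descending chain of submodules of $M$ to a pair of descending chains, one in $Q$ and one in $N$: the ordinary (non-commutative) sum appears when such a pair is \emph{stacked}, and the shuffle sum when it is \emph{interleaved}. Throughout I use the structural fact that the supremum defining $\len$ is attained by a longest chain, together with the monotonicity of $\ssum$.

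For the lower bound, take a longest descending chain $\mathfrak{c}_Q$ of submodules of $Q$ and a longest descending chain $\mathfrak{c}_N$ of submodules of $N$. Since $\pi^{-1}$ commutes with intersections, the image $\pi^{-1}(\mathfrak{c}_Q)$ is a descending chain of submodules of $M$, all containing $N$, of the same order type as $\mathfrak{c}_Q$, whose least element or intersection equals $N$. Inserting $N$ and then continuing with $\mathfrak{c}_N$ (which may be arranged to start at $N$) produces a descending chain of $M$ whose order type, hence length, is the ordinary sum of the lengths of $\mathfrak{c}_Q$ and $\mathfrak{c}_N$; therefore $\len Q+\len N\le\len M$. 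The only care needed is the ordinal bookkeeping at the seam and at limit stages, which is precisely why having the supremum attained is convenient here.

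The upper bound is the substantive half. Let $(M_i)_{i<\gamma}$ be an arbitrary descending chain of submodules of $M$, and consider the weakly descending chains $\pi(M_i)$ in $Q$ and $M_i\cap N$ in $N$. The decisive point is a one-line diagram chase: if $i\le j$ with $\pi(M_i)=\pi(M_j)$ and $M_i\cap N=M_j\cap N$, then $M_i=M_j$. Hence $i\mapsto(\pi(M_i),\,M_i\cap N)$ is a strictly order-preserving injection of $\gamma$ into the product --- with the coordinate-wise order --- of a descending chain of $Q$ of some type $\alpha'\le\len Q$ and a descending chain of $N$ of some type $\beta'\le\len N$. It then remains to invoke the combinatorial description of the shuffle sum: any well-ordered chain inside the product of two well-orders of types $\alpha'$ and $\beta'$ has order type at most $\alpha'\ssum\beta'$. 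Combined with the monotonicity of $\ssum$ this forces $\gamma\le\len Q\ssum\len N$, and taking the supremum over all such $\gamma$ yields $\len M\le\len Q\ssum\len N$.

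Finally, when the sequence splits we have $M=N\ssum Q$, and in view of the upper bound it suffices to exhibit a single descending chain of $M$ of length $\len N\ssum\len Q$. Here I would take longest chains in $N$ and $Q$, read off the Cantor normal forms of $\len N$ and $\len Q$, and weave the two chains together block by block in the order those normal forms prescribe --- descending within one summand of $N\ssum Q$ while freezing the other coordinate --- so that the resulting chain of ``box'' submodules has length exactly the coefficient-wise sum. The genuine obstacle is the upper bound: pinning down the lemma that the shuffle sum bounds the order type of a chain in a product of two well-orders, and dovetailing it precisely with the chosen normalization of $\len$ (nothing lost by a single step at successor stages, nothing mishandled at limits). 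By comparison, the lower bound and the weaving construction are transfinite bookkeeping once the attainment of the defining supremum is in hand.
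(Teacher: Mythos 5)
The theorem is quoted from \cite{SchSemAdd}, so the paper gives no in-text proof to compare against; evaluating the proposal on its own terms, it has two genuine gaps. For the lower bound you ``take a longest descending chain'' in $Q$ and in $N$, that is, you assume the supremum defining $\len{\cdot}$ is attained. This is not automatic: a well-founded poset can have rank $\omega$ while every one of its chains is finite, and for Grassmanians of modules attainment is a nontrivial fact that would itself have to be established. Nor can one sidestep it by passing to suprema over strictly shorter chains, because ordinal addition is discontinuous in the left argument ($\sup_{m<\omega}(m+\omega)=\omega\ne\omega+\omega$). The robust route is the height-rank formulation \eqref{eq:quot}: one shows by transfinite induction, for every $H\subseteq N$, that the height rank of $H$ computed in $\gr M$ is at least $\len Q$ plus its height rank computed in $\gr N$; the limit step then works because $\alpha\mapsto\len Q+\alpha$ \emph{is} continuous in the right argument, and the successor step is immediate. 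Your stacking picture is the right intuition, but the transfinite bookkeeping you defer is exactly where the content lies.

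For the upper bound, the observation that $i\mapsto(\pi(M_i),M_i\cap N)$ is a strictly order-preserving injection is correct and is the right starting point, but you then invoke without proof the lemma that a chain in the coordinate-wise product of two well-orders of types $\alpha$ and $\beta$ has order type at most $\alpha\ssum\beta$. This is not the same as the interleaving characterization of the shuffle sum (which concerns linear extensions of the disjoint union $\alpha\sqcup\beta$, not chains in $\alpha\times\beta$), and one must also track a persistent off-by-one between the order type of a chain and the length it realizes. The split case repeats the attainment issue when weaving the two chains. So the architecture is plausible, but the two load-bearing lemmas --- attainment of $\len{\cdot}$ and the product-chain bound --- are precisely what is missing, and without them the proof is not complete.
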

 
  As an application, we obtain that a Noetherian ring has length $\omega^d$ \iff\ it is a domain of Krull dimension $d$. In this paper, we prove the remarkable fact that this combinatorial invariant can be described in terms of homological invariants: recall that  the \emph{local multiplicity} $\lcl \pr M$ at a prime ideal $\pr$ is defined as the length of the $\pr$-torsion submodule of $M_\pr$ (whence is non-zero \iff\ $\pr$ is an associated prime of $M$). 

\begin{maintheorem}
The length of $M$ is equal to $\Ssum_\pr \lcl \pr M\omega^{\op{dim}(R/\pr)}$.
\end{maintheorem}

 Combining this with semi-additivity, leads to many interesting applications (\cite{SchSemAdd,SchBinEndo,SchCond}). In this paper, we apply the theory to define a canonical topology on every module: the open submodules in this topology are precisely those that have the same length as $M$. We show that any morphism between modules is continuous in this topology. Any open submodule is essential, and in \cite{SchCond}, we will characterize those modules for which the converse also holds. In the last section, as an application of this material, we discuss the phenomenon of \emph{degradation}: how source and target of a module may force the morphism to become (almost) zero. For instance, we show

 \begin{citedtheorem} 
If an endomorphism on $M$ factors through a module that has no associated primes in common with $M$, then it must be nilpotent.
\end{citedtheorem}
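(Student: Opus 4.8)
The plan is to track the Krull dimensions of the iterated images of $f$. Write the factorization as $M\map g N\map h M$, so that $f=h\after g$ and $\operatorname{Ass}(M)\cap\operatorname{Ass}(N)=\emptyset$, and put $d:=\dim M<\infty$ (we may assume $M\neq0$). There is no loss in assuming $R$ Noetherian: if not, replace it by $R/\ann R{M\ssum N}$, which is Noetherian since it acts faithfully on the Noetherian module $M\ssum N$. I will show that $\dim\operatorname{im}(f^{n})$ strictly decreases at each step until the image becomes $0$; as $d$ is finite, this forces $f^{d+1}=0$, hence $f$ nilpotent.

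The heart of the argument is the following estimate: \emph{for every submodule $L\sub M$, either $f(L)=0$ or $\dim f(L)\leq\dim L-1$.} Indeed, $g(L)$ is a submodule of $N$, so $\operatorname{Ass}(g(L))\sub\operatorname{Ass}(N)$, whereas $f(L)=h(g(L))$ is simultaneously a submodule of $M$ — so $\operatorname{Ass}(f(L))\sub\operatorname{Ass}(M)$ — and a homomorphic image of $g(L)$ — so $\operatorname{Supp}(f(L))\sub\operatorname{Supp}(g(L))$. Pick $\mathfrak q\in\operatorname{Ass}(f(L))$. Since $\mathfrak q\in\operatorname{Supp}(g(L))$ and $g(L)$ is a finitely generated module over a Noetherian ring, there is $\pr\in\operatorname{Ass}(g(L))\sub\operatorname{Ass}(N)$ with $\pr\sub\mathfrak q$; and because $\mathfrak q\in\operatorname{Ass}(M)$ while $\operatorname{Ass}(M)\cap\operatorname{Ass}(N)=\emptyset$, the containment is strict, so $\dim R/\mathfrak q<\dim R/\pr\leq\dim g(L)\leq\dim L$ — the last inequality because $g(L)$ is a quotient of $L$. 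Maximizing over $\mathfrak q\in\operatorname{Ass}(f(L))$ yields the estimate.

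Feeding $L=\operatorname{im}(f^{n})$ into the estimate and using $\operatorname{im}(f^{n+1})=f(\operatorname{im} f^{n})$, one gets $\dim\operatorname{im}(f^{n+1})\leq\dim\operatorname{im}(f^{n})-1$ whenever $\operatorname{im}(f^{n+1})\neq0$. As $\dim\operatorname{im}(f^{0})=\dim M=d$, an induction shows that a nonzero $\operatorname{im}(f^{n})$ has dimension at most $d-n$; hence $\operatorname{im}(f^{d+1})$ cannot be nonzero and $f^{d+1}=0$. The step I expect to need the most care is the associated-prime/support bookkeeping — in particular the passage ``$\mathfrak q\in\operatorname{Supp}(g(L))$ yields $\pr\in\operatorname{Ass}(g(L))$ with $\pr\sub\mathfrak q$'', which is where Noetherianity genuinely enters, together with the (harmless) reduction to a Noetherian base ring; the inequality $\dim f(L)\leq\dim L-1$ is exactly the point at which the disjointness hypothesis on associated primes is used. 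Finally, the argument can be recast through length: by the Main Theorem it says that the Cantor normal form of $\len{\operatorname{im}(f^{n})}$ contains no term $\omega^{e}$ with $e>d-n$, and the well-ordering of the ordinals then also forces termination; but it is the finiteness of $\dim M$ that does the real work, so I would keep the dimension count as the primary line of argument.
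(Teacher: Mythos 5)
Your proof is correct, and it takes a genuinely different route from the paper. The paper proves nilpotency as a byproduct of its canonical-topology machinery: from the disjointness of $\operatorname{Ass}(M)$ and $\operatorname{Ass}(N)$ it deduces (via \Cor{C:openker} and continuity) that $\op{ker}(f)$ is \emph{open}; then Noetherianity gives $\op{ker}(f^n)=\op{ker}(f^{n+1})$ for some $n$, which forces $\op{ker}(f^n)\cap\op{Im}(f^n)=0$; since $\op{ker}(f^n)$ contains the open $\op{ker}(f)$ it is itself open, hence essential by \Cor{C:bigbg}, so $\op{Im}(f^n)=0$. Your argument bypasses the length/topology framework entirely and instead tracks Krull dimension of iterated images, with the key observation — correct, and well justified — that an associated prime $\mathfrak q$ of $f(L)$ must lie strictly above an associated prime $\pr$ of $g(L)\sub N$, because $\mathfrak q\in\op{Ass}(M)$ while $\pr\in\op{Ass}(N)$ and these sets are disjoint. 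What your route buys: it is more elementary (needing only standard facts about $\op{Ass}$ and $\op{Supp}$ of finitely generated modules over a Noetherian ring), and it yields the explicit nilpotency index $\dim M+1$ depending only on $M$. In fact, since your dimension-drop estimate applies verbatim to $\phi=g_i\circ f_i$ for any choice of morphisms $f_i\colon M\to N$, $g_i\colon N\to M$, it gives a cleaner proof of \Thm{T:pingpong} with $k=\dim M+1$ \emph{and} removes the $\zet$-torsion-free hypothesis, which the paper needs only to invoke Nagata--Higman. What the paper's route buys: it situates the result inside the canonical-topology framework that is the subject of the paper, and the ideal $\mathfrak N$ of endomorphisms with open kernel is structurally interesting beyond this single application.
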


 \section{Ordinals and ordinal length}\label{s:Ord}
A partial ordering is called a \emph{(partial) well-order} if it has the
descending chain condition, that is to say, any descending chain must
eventually be constant. A total order is a well-order  \iff\ every non-empty
subset has a minimal element.  
An \emph{ordinal} is then an equivalence class, up to an order-preserving  isomorphism, of a total well-order.  
 The set of ordinals is a transfinite extension of the set of natural numbers $\nat$, in which the usual induction is replaced by transfinite induction.  
 We say that $\alpha\leq\beta$ if $\alpha$ can be embedded as a total order in $\beta$. Any bounded subset of  ordinals has  then  an infimum and a supremum.  The (Cantor) sum $\alpha+\beta$ is the ordinal corresponding to the well-order on $\alpha\sqcup\beta$ obtained by letting any element of $\beta$ be larger than any element of $\alpha$. Thus,   $1+\omega$ is the same as $\omega$, whence in particular different from $\omega+1$. We will not need arbitrary ordinal multiplication, but only products of the form $n\alpha$ with $n\in\nat$, simply defined as the sum of $n$ copies of $\alpha$ (be aware that logicians would use the more awkward notation   $\alpha\cdot n$ for this).  The supremum of the $n\omega$ is denoted $\omega^2$ and is the order-type of the lexicographical ordering on $\nat^2$. The $\omega^d$ are similarly defined, and their supremum is denoted $\omega^\omega$.
 
 Let $\ord$ be the collection of ordinals  strictly below $\omega^\omega$.  Any $\alpha\in\ord$ has a unique Cantor normal form
\begin{equation}\label{eq:CNF}
\alpha=a_d\omega^d+\dots+a_1\omega+a_0
\end{equation}
with $a_n\in\nat$, called its \emph{Cantor coefficients}. The \emph{support} of $\alpha$, denoted $\op{Supp}(\alpha)\sub\nat$, consists of all $i$ for which $a_i\neq 0$. The maximum and minimum of the  support of $\alpha$ are  called respectively its \emph{degree} and \emph{order}. An ordinal is a successor ordinal (=of the form $\alpha+1$ for some ordinal $\alpha$) \iff\ its order is zero. The sum of all $a_i$ is called the \emph{valence} of $\alpha$. The ordering $\leq$ on ordinals corresponds   to the lexicographical ordering on the tuples of Cantor coefficients $(a_d,\dots,a_0)$. Let us say that $\alpha$ is \emph{weaker} than $\beta$, denoted $\alpha \aleq\beta$,  if $a_i\leq b_i$ for all $i$, where, likewise,  the $b_i$ are the Cantor coefficients   of $\beta$. 
Note that $\leq $ extends the partial order $\aleq$ to a total order.

Apart from the usual ordinal sum, we make use of the natural or \emph{shuffle} sum $\alpha\ssum\beta$ given in Cantor normal form as $(a_d+b_d)\omega^d+\dots+a_0+b_0$. Note that the shuffle sum is commutative, and $\alpha+\beta$ will in general be smaller than $\alpha\ssum\beta$. In fact, we showed in \cite[Theorem 7.1]{SchSemAdd} that the shuffle sum is the largest possible ordinal sum one can obtain from writing both ordinals as a sum of smaller ordinals and then rearranging their terms. In particular, $\ord$ is closed under both additions. Moreover, since $\alpha\preceq \beta$ \iff\ there exists $\gamma$ such that $\alpha\ssum\gamma=\beta$, we  may view $(\ord,\ssum,\aleq)$ as a partially ordered commutative semi-group.

\section{Length and semi-additivity}
All rings will be commutative, Noetherian, of finite Krull dimension, and all modules will be finitely generated. Throughout, if not specified otherwise, $R$ denotes   a (finite-dimensional, Noetherian) ring and $M$ denotes some (finitely generated)  $R$-module. By Noetherianity, the collection of submodules of $M$ ordered by \emph{inverse} inclusion is a partial well-order, called the \emph{\sch} of $M$ and denoted $\grass R M$ (or just $\gr M$). In particular, any chain in $\grass R M$ is (equivalent to) an ordinal. The supremum of all possible ordinals arising as a chain in this way is called the \emph{length} of $M$ and is denoted  $\lenmod RM$, or when the base ring is clear, simply by $\len M$. 
Note  that the length of $M$ as an $R$-module is the same as that of an $R/\ann RM$-module, and so we may assume, if necessary, that $M$ is faithful. It follows from the Jordan-H\"older theory that this ordinal length coincides with the usual length for modules of finite length. The length of a ring is that of a module over itself. In other words, $\len R$ is the longest descending chain of ideals in $R$. 

 It is useful to have also a transfinite definition of length: we define a \emph\hdim\ $\hd\cdot$ on $\grass R M$, as follows. Put $\hd M:=0$. Given a submodule $N\sub M$, at a successor stage, we say that $\hd N\geq \alpha+1$, if there exists a submodule $N'\sub M$ containing $N$ such that $\hd {N'}\geq \alpha$. If $\lambda$ is a limit ordinal (that is to say, not a successor ordinal), then we say that $\hd N\geq\lambda$, provided for each $\alpha<\lambda$, there exists a submodule $N_\alpha\sub M$ containing $N$ with $\hd {N_\alpha}\geq \alpha$. Finally, we say that $\hd N=\alpha$ if $\hd N\geq\alpha$ but not $\hd N\geq \alpha+1$. We prove in \cite[Theorem 3.10]{SchSemAdd} that   the \hdim\ of the zero module is the length of $M$. In fact, more generally,  for any submodule $N\sub M$, its \hdim\ equals its \emph{co-length}, that is to say,
  \begin{equation}\label{eq:quot}
 \hd N=\len  {M/N}.
\end{equation}
Note that height rank satisfies the following \emph{continuity} property: $\hd N$ is
 less than or equal to the supremum of all $\hd W+1$,
 for all $W$ strictly containing  $N$. 
Using semi-additivity (see introduction) we showed in \cite{SchSemAdd}:

\begin{theorem}[Dimension]\label{T:dim}
Let $M$ be a   finitely generated module over a finite-dimensional Noetherian ring $R$. Then the degree of $\len M$ is equal to the dimension of $M$. In particular,   $R$ is a $d$-dimensional domain \iff\ $\len R=\omega^d$.\qed
\end{theorem}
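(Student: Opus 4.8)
The quickest route is to invoke the Main Theorem. Writing $\len M=\Ssum_{\pr}\lcl\pr M\,\omega^{\op{dim}(R/\pr)}$, the Cantor coefficient of $\omega^i$ is $\sum_{\op{dim}(R/\pr)=i}\lcl\pr M$, and since $\lcl\pr M\neq 0$ exactly when $\pr\in\op{Ass}(M)$, the degree of $\len M$ equals $\max\{\op{dim}(R/\pr):\pr\in\op{Ass}(M)\}$. The theorem then reduces to the standard fact that this maximum is $\op{dim}M$: one has "$\leq$" from $\op{Ass}(M)\sub\op{Supp}(M)$, and "$\geq$" because the minimal primes of $\op{Supp}(M)$ (equivalently, of $\ann RM$) lie in $\op{Ass}(M)$ and one of them realizes $\op{dim}(R/\ann RM)=\op{dim}M$. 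For the "in particular": if $R$ is a $d$-dimensional domain then $\op{Ass}(R)=\{(0)\}$, the ring $R_{(0)}$ is a field so $\lcl{(0)}R=1$, and $\len R=\omega^d$; conversely, $\len R=\omega^d$ has a single non-zero Cantor coefficient, a $1$ in degree $d$, forcing $\op{Ass}(R)=\{\pr_0\}$ with $\op{dim}(R/\pr_0)=d$ and $\lcl{\pr_0}R=1$, whence $\pr_0$ is the unique minimal prime, the non-zerodivisors of $R$ are precisely $R\setminus\pr_0$, and $\lcl{\pr_0}R=1$ forces $R_{\pr_0}$ to be a field; since $R\into R_{\pr_0}$, the ring $R$ is a $d$-dimensional domain.

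If one prefers to avoid the Main Theorem (as in \cite{SchSemAdd}), the statement should be proved by induction on $d:=\op{dim}M$, the base $d=0$ being the classical finite-length case, where $\len M\in\nat$ has degree $0$. For the inductive step, first treat a domain $D$ with $\op{dim}D=d$. The bound $\deg\len D\leq d$ follows from the continuity of \hdim: $\len D=\hd 0\leq\sup\{\len(D/I)+1:0\neq I\sub D\}$ by \eqref{eq:quot}, and every such $D/I$ has dimension $\leq d-1$ (a prime over a non-zero ideal of a domain is non-zero), so $\len(D/I)<\omega^d$ by the induction hypothesis, whence $\len D\leq\omega^d$. For the reverse bound (assume $d\geq 1$), pick a chain $0=\pr_0\subsetneq\pr_1\subsetneq\dots\subsetneq\pr_d$ and a non-zero $x\in\pr_1$; then $\op{dim}D/(x)=d-1$ (it is $\geq d-1$ from the chain, and $\leq d-1$ since $(x)$ has height $1$), while $(x)\iso D$ as $D$-modules, so semi-additivity on $\Exactseq{(x)}D{D/(x)}$ gives $\len(D/(x))+\len D\leq\len D$; as $\gamma\leq\delta+\gamma$ always holds, this is an equality, and a look at Cantor normal forms shows $\delta+\gamma=\gamma$ (with $\delta\neq 0$, since $x$ is a non-unit) forces $\deg\delta<\deg\gamma$; by the induction hypothesis $\deg\len(D/(x))=d-1$, so $\deg\len D\geq d$, hence $=d$.

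The general module $M$ with $\op{dim}M=d$ then follows: a prime filtration $0=M_0\subsetneq\dots\subsetneq M_n=M$ with $M_i/M_{i-1}\iso R/\pr_i$, together with the shuffle-sum half of semi-additivity, gives $\len M\leq\Ssum_i\len(R/\pr_i)$, each $R/\pr_i$ a domain of dimension $\leq d$, so every summand has degree $\leq d$ and thus $\deg\len M\leq d$; meanwhile, choosing $\pr\in\op{Ass}(M)$ with $\op{dim}(R/\pr)=d$ and using monotonicity of length on the submodule $R/\pr\into M$ (itself a consequence of semi-additivity) gives $\len M\geq\len(R/\pr)$, whence $\deg\len M\geq d$ by the domain case.

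The only genuinely delicate point is the lower bound for domains: it hinges on the \emph{left-truncating} behaviour of the (non-commutative) ordinal sum — so that $\len(D/(x))$, added on the \emph{left} of $\len D$, is absorbed — combined with the dimension drop $\op{dim}D/(x)=d-1$ and the identification $(x)\iso D$. Everything else is bookkeeping with Cantor normal forms and elementary dimension theory.
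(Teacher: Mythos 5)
The paper itself gives no proof of \Thm{T:dim} --- it is stated with a \verb|\qed| and attributed to \cite{SchSemAdd} --- so there is no in-text argument to compare against; one can only judge your proposal on its own merits and against the logical order of the present paper.

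Your first route (``invoke the Main Theorem'') is circular here: the proof of \Thm{T:cohrk} explicitly calls on \Thm{T:dim} to upgrade the inequality $\len{R/\pr}\leq\omega^e$ to an equality, so you cannot use $\len M=\cohrk M$ to \emph{establish} \Thm{T:dim}. You half-acknowledge this with ``if one prefers to avoid the Main Theorem,'' but it should be stated plainly that within this paper's development the first route is not available; it is merely a consistency check. It does, however, correctly settle the reverse implication of the ``in particular'' (that $\len R=\omega^d$ forces $R$ to be a domain), which your second route does not address.

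Your second route is the genuine proof, and it is correct. It uses only material that precedes \Thm{T:dim} in the paper's dependency graph: semi-additivity, the identity $\hd N=\len{M/N}$ from \eqref{eq:quot}, and the continuity property of height rank. The key steps all check out: $\dim(D/I)\leq d-1$ for nonzero $I$ in a $d$-dimensional domain gives the upper bound $\len D\leq\omega^d$ via continuity; the identification $(x)\iso D$ together with $\dim D/(x)=d-1$ and left-absorption in ordinal addition ($\delta+\gamma=\gamma$ with $\delta\neq 0$ forces $\deg\delta<\deg\gamma$) gives the lower bound; and the general case follows from a prime filtration plus the fact that some $\pr\in\op{Ass}(M)$ realizes $\dim(R/\pr)=\dim M$. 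One small hygiene point: after establishing $\len D\leq\omega^d$ and $\deg\len D\geq d$, you should observe explicitly that these force $\len D=\omega^d$ exactly (since $\omega^d$ is the least ordinal of degree $d$), as you later use this equality, not just the degree, for $\len{R/\pr}$ in the general lower bound. This is presumably the argument of \cite{SchSemAdd}, and your identification of the left-absorption phenomenon as the delicate point is exactly right.
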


The \emph{order} of a module is by definition the order of its length, and will be denoted $\order {}M$; the \emph{valence} $\val M$ is the valence of its length. We will calculate these two invariants in \Cor{C:order} below. 
Let us call an exact sequence \emph{strongly \add} if we have equality at both sides of  \eqref{eq:lensemadd}; if we only have equality at the right, we call the sequence \emph\add.  Being strongly \add\ is really a property of ordinals:   $\alpha+\beta=\alpha\ssum\beta$ \iff\ the degree of $\beta$ is at most the order of $\alpha$, and hence

\begin{corollary}\label{C:orddim}
An exact sequence of finitely generated $R$-modules 
$$\Exactseq NMQ,$$
   is strongly \add\ \iff\  $\dim N\leq \order {}Q$.\qed
\end{corollary}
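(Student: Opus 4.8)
The plan is to reduce the statement to the ordinal identity recalled just before it — that $\alpha+\beta=\alpha\ssum\beta$ precisely when the degree of $\beta$ is at most the order of $\alpha$ — together with the Dimension Theorem \Thm{T:dim}.

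First I would note that, by semi-additivity, strong additivity of the sequence is equivalent to a single equality of ordinals. Indeed, \eqref{eq:lensemadd} sandwiches $\len M$ between $\len Q+\len N$ and $\len Q\ssum\len N$; hence both of those inequalities are equalities — i.e.\ the sequence is strongly \add — \iff\ the two bounding ordinals coincide, that is, \iff\ $\len Q+\len N=\len Q\ssum\len N$. In particular one need not assume a priori that the sequence is \add: the sandwich delivers both equalities at once.

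Next I would apply the ordinal identity with $\alpha:=\len Q$ and $\beta:=\len N$: the equality $\len Q+\len N=\len Q\ssum\len N$ holds \iff\ the degree of $\len N$ does not exceed the order of $\len Q$. Finally I would translate the two sides into geometric terms: by \Thm{T:dim} the degree of $\len N$ equals $\dim N$, and the order of $\len Q$ is, by the very definition of the order of a module, equal to $\order{}Q$. Concatenating these equivalences yields the corollary.

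I do not expect a real obstacle. The only points requiring a little care are the degenerate cases $N=0$ or $Q=0$, which are handled by the usual conventions ($\dim 0=-\infty$, and the order of the ordinal $0$ taken to be $+\infty$), both of which keep the displayed equivalence valid; and checking that the direction producing strong additivity from the identity $\len Q+\len N=\len Q\ssum\len N$ genuinely uses \emph{both} halves of \eqref{eq:lensemadd}, via the sandwich. If any step deserves to be called the main one, it is the appeal to \Thm{T:dim} identifying the degree of $\len N$ with $\dim N$; everything else is routine bookkeeping with Cantor normal forms.
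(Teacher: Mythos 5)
Your proposal is correct and is essentially the same argument the paper intends: the sandwich $\len Q+\len N\leq\len M\leq\len Q\ssum\len N$ forces strong additivity to be equivalent to $\len Q+\len N=\len Q\ssum\len N$, the ordinal fact converts this to ``degree of $\len N$ at most order of $\len Q$'', and \Thm{T:dim} together with the definition of $\order{}Q$ finishes it. The paper states this in one line just before the corollary; you simply unfold the same steps (and your handling of the degenerate cases $N=0$, $Q=0$ via the usual conventions is a reasonable bit of extra care).
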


\section{Length as a cohomological rank}
As customary, we define the \emph{dimension} of a prime ideal $\pr\sub R$, denoted $\op{dim}(\pr)$,   as the Krull dimension of the residue ring $R/\pr$. 
We denote the collection of all associated primes (= prime ideals of the form $\ann {}a$ with $a\in M$) by $\op{Ass}(M)$; it is always a finite set.  
We will make frequent use, for a short exact sequence $\Exactseq NMQ$, of the following two inclusions (see, for instance, \cite[Lemma 3.6]{Eis})
 
\begin{align}
 \label{eq:ass1} \op{Ass}(N)&\sub\op{Ass}(M);\\
 \label{eq:ass2} \op{Ass}(M)&\sub\op{Ass}(N)\cup\op{Ass}(Q)
\end{align}

 We define the the (zero-th) \emph{local cohomology}\footnote{This deviates slightly from the practice in \cite[App.~4]{Eis} as we also localize.} of $M$ at $\pr$, denoted $\lc M\pr$, as the $\pr$-torsion of $M_\pr$, that is to say, the submodule of $M_\pr$ consisting of  all elements   that are killed by some power of $\pr$. As $\lc M\pr$ is a module of finite length over $R_\pr$, we denote this length by $\lcl \pr M$ and call it the \emph{local multiplicity of $M$ at $\pr$} (see, for instance, \cite[p.~102]{Eis}). An alternative formulation is through the notion of the   \emph{finitistic  length}  of  a module $M$, defined  as the supremum $\finlen M$ of all $\len N$ with $N\sub M$ and $\len N<\omega$. By Noetherianity,  $M$ has a largest submodule $H$ of finite length, and hence $\finlen M=\len H$. With this notation, we have
 \begin{equation}\label{eq:lcl}
\lcl \pr M =\finlen{M_\pr},
\end{equation} 
for any prime ideal $\pr$, and this is non-zero \iff\ $\pr$ is an associated prime of $M$. 
 We now define
the \emph{cohomological rank} of a module $M$ as  
$$
\cohrk M:=\Ssum_{\pr} \lcl \pr M\omega^{\op{dim}(\pr)}.
$$

It is instructive to view this from the point of view of Chow cycles. 
Let $\mathcal A(R)$ be the \emph{Chow ring} of $R$, defined as the free Abelian group on $\op{Spec}(R)$. An element $D$ of $\mathcal A(R)$ will be called a \emph{cycle}, and  will be represented as a finite sum  $\sum a_i[\pr_i]$, where $[\pr]$ is the symbol denoting the free generator corresponding to the prime ideal $\pr$. The sum of all $a_i$ is called the degree $\op{deg}(D)$ of the cycle $D$. We define a partial order on $\mathcal A(R)$ by the rule that $D\aleq E$, if $a_i\leq b_i$, for all $i$, where $E=\sum b_i[\pr_i]$. In particular, denoting the zero cycle simply by $0$, we call a cycle $D$ \emph{effective} , if $0\aleq D$, and we let $\mathcal A^+(R)$ be the semi-group of effective cycles. This allows us to define a map  from effective cycles to ordinals by sending the effective cycle $D=\sum_ia_i[\pr_i]$ to the ordinal
$$
\binord D:=\Ssum_i a_i\omega^{\op{dim}(\pr_i)}.
$$
Clearly, if $D$ and $E$ are effective, then $\binord{D+E}=\binord D\ssum\binord E$. Moreover, if $D\aleq E$, then $\binord D\aleq\binord E$, so that we get a map $(\mathcal A^+(R),+,\aleq)\to (\ord,\ssum,\aleq)$ of partially ordered semi-groups.

To any $R$-module $M$, we can  assign its \emph{fundamental cycle}, by the rule
\begin{equation}\label{eq:}
\fcyc RM:=\sum_{\pr}\lcl \pr M[\pr]
\end{equation} 
 Immediately  from \eqref{eq:lcl} we get $\cd {} M =\cohrk M$.  
 Our main result now links this cohomological invariant to our combinatorial length invariant :
 
 \begin{theorem}\label{T:cohrk}
For any finitely generated module $M$ over a finite-dimensional Noetherian ring $R$, we   have $\len M=\cd {} M =\cohrk M$.
\end{theorem}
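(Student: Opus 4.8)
The plan is to prove $\len M = \cohrk M$ by Noetherian induction on the Grassmanian $\gr M$, using semi-additivity as the main engine, and then invoke $\cd{}M = \cohrk M$, which is immediate from \eqref{eq:lcl}. First I would reduce to the case where $M$ is \emph{$\pr$-coprimary} for a single associated prime. Indeed, if $M$ has more than one associated prime, by the theory of primary decomposition there is a submodule $N \subseteq M$ with $\op{Ass}(N)$ and $\op{Ass}(M/N)$ a nontrivial partition of $\op{Ass}(M)$; the fundamental cycle is additive, $\fcyc RM = \fcyc RN + \fcyc R{M/N}$, so $\cohrk M = \cohrk N \ssum \cohrk{M/N}$. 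On the length side, semi-additivity gives $\len{M/N} + \len N \le \len M \le \len{M/N}\ssum\len N$, and I would want the full shuffle sum. Here \Cor{C:orddim} should be exploited: arrange the partition so that $\dim N \le \op{order}(M/N)$, which by \Cor{C:orddim} forces the sequence to be strongly additive, hence $\len M = \len{M/N}\ssum\len N = \cohrk{M/N}\ssum\cohrk N = \cohrk M$ by induction. (One must check such a splitting of associated primes respecting the dimension ordering always exists: take $N$ to be the submodule whose associated primes are those of maximal dimension, so that every prime of $M/N$ has strictly smaller dimension, whence $\op{order}(M/N) \geq $ that smaller dimension $\geq \dim N$.)

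Next, the coprimary case. Suppose $\op{Ass}(M) = \{\pr\}$ with $d := \dim\pr = \dim M$ (by \Thm{T:dim}, the degree of $\len M$ is $d$). Then $\fcyc RM = \lcl\pr M[\pr]$ and $\cohrk M = \lcl\pr M\,\omega^d$, so I must show $\len M = e\,\omega^d$ where $e = \lcl\pr M = \lenmod{R_\pr}{M_\pr}$ is the ordinary length of the localization at the generic point. The inequality $\len M \le e\,\omega^d$: filter $M$ by a chain $0 = M_0 \subset M_1 \subset \dots \subset M_e = M$ whose successive quotients become, after localizing at $\pr$, one-dimensional $R_\pr/\pr R_\pr$-spaces — concretely, pull back a composition series of $M_\pr$. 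Each quotient $M_i/M_{i-1}$ is then a torsion-free-modulo-lower-dimensional module over $R/\pr$ generically of rank one, hence is a domain-like module of length $\le \omega^d$ (here \Thm{T:dim} for $R/\pr$ a $d$-dimensional domain gives $\len{R/\pr} = \omega^d$, and any finitely generated $R/\pr$-module of generic rank one embeds in $R/\pr$ up to lower-dimensional error, so its length is $\le \omega^d$ by monotonicity of co-length \eqref{eq:quot}). Iterating semi-additivity across the filtration gives $\len M \le \bigssum_i \len{M_i/M_{i-1}} \le e\,\omega^d$.

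For the reverse inequality $\len M \ge e\,\omega^d$ in the coprimary case, I would build a long descending chain realizing $e\,\omega^d$. The idea: since the generic rank is $e$, there is an embedding $(R/\pr)^e \into M$ with cokernel supported in dimension $< d$ (clear denominators in an isomorphism $M_\pr \iso (R/\pr)_\pr^e$). By semi-additivity again, $\len M \ge \len{(R/\pr)^e} = e\,\omega^d$ using the split additivity clause of semi-additivity for the direct sum $(R/\pr)^e$, together with $\len{R/\pr} = \omega^d$ from \Thm{T:dim}. This direction is cleaner because semi-additivity's lower bound $\len Q + \len N \le \len M$ and the exact split clause do the work, and no delicate ordinal arithmetic about which sum appears is needed.

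The main obstacle I anticipate is the bookkeeping in the coprimary upper bound: controlling what happens to the successive quotients of a filtration pulled back from $M_\pr$, specifically verifying that each quotient genuinely has length $\le \omega^d$ and not something larger because of embedded lower-dimensional junk. The clean way around this is to argue that the quotient $M_i/M_{i-1}$, being generically a one-dimensional vector space over the domain $R/\pr$, has a submodule isomorphic to an ideal of $R/\pr$ with lower-dimensional cokernel, and ideals of the $d$-dimensional domain $R/\pr$ have length exactly $\omega^d$ — this follows from \Thm{T:dim} applied to $R/\pr$ together with the fact (provable by a further short induction, or cited from \cite{SchSemAdd}) that a nonzero ideal and its ambient domain have the same length. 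With that lemma in hand the rest is a routine transfinite assembly using \eqref{eq:lensemadd}, and the equality $\len M = \cd{}M = \cohrk M$ follows by combining both halves with \eqref{eq:lcl}.
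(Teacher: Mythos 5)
Your approach is genuinely different from the paper's. The paper proves the two inequalities $\len M\leq\cohrk M$ and $\len M\geq\cohrk M$ by two separate inductions: the first uses \Lem{L:lcord} (a proper surjection strictly decreases $\cohrk$) together with the continuity of \hdim; the second peels off a single cyclic submodule $H\iso R/\pr$ for a minimal-\emph{dimension} associated prime and then compares $\cohrk{M/H}$ and $\cohrk M$ coefficient by coefficient. Your plan instead reduces to the $\pr$-coprimary case and computes directly. That strategy can be carried out, but as written there are two real gaps and one directional slip.

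\textbf{Unjustified cycle additivity.} You assert as a free fact that $\fcyc RM=\fcyc RN+\fcyc R{M/N}$. This is false in general because $\lc\cdot\pr$ is only left exact; the paper's \Lem{L:loccohgen} gives exactness only at \emph{minimal} primes of $M$, and \Cor{C:semiaddcyc} requires $M$ to have no embedded primes and still only yields an equality up to an effective correction cycle $D$. For the specific $N$ you want (a dimension-filtration stage, or a primary component of an unmixed module), cycle additivity \emph{does} hold, but it has to be proved; for $N=\fl D_i(M)$ one must verify $\lc N\pr=\lc M\pr$ at every $\pr$ of dimension $\leq i$ (which follows because any $\pr$-torsion element of $M_\pr$ lifts, after multiplying by a unit, to an element of $M$ of dimension at most $\dim\pr\le i$, hence to an element of $\fl D_i(M)$) and $N_\pr=0$ at primes of higher dimension. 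Without this lemma the reduction does not close.

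\textbf{The lower bound is wrong as stated.} You claim to ``clear denominators in an isomorphism $M_\pr\iso(R/\pr)_\pr^e$.'' But for a $\pr$-coprimary $M$, the Artinian $R_\pr$-module $M_\pr$ has length $e$; it need not be semisimple, so no such isomorphism exists. For instance, with $R=M=k[x,y]/(x^2)$ and $\pr=(x)$, one has $M_\pr\iso k(y)[x]/(x^2)$, the dual numbers over $k(y)$, which has $1$-dimensional socle and so admits no embedding of $(R_\pr/\pr R_\pr)^2$. There is consequently no embedding $(R/\pr)^2\into R$ with lower-dimensional cokernel, and your argument for $\len M\geq e\omega^d$ collapses. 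Fortunately, your own upper-bound filtration rescues this: taking $M_i:=M\cap V_i$ for a composition series $(V_i)$ of $M_\pr$, each nonzero quotient $M_i/M_{i-1}$ embeds in $V_i/V_{i-1}\iso\kappa(\pr)$, hence is isomorphic to a nonzero ideal of $R/\pr$ and has length exactly $\omega^d$; moreover $(M_i/M_{i-1})_\pr=V_i/V_{i-1}\neq0$, so there are exactly $e$ of them. The \emph{left-hand} inequality of semi-additivity \eqref{eq:lensemadd} applied iteratively along the filtration then gives $\len M\geq e\omega^d$, while the shuffle-sum upper bound gives $\len M\leq e\omega^d$. This is the correct argument; yours is not.

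\textbf{Direction slip.} In the parenthetical justifying the existence of the splitting, you pick $N$ to carry the primes of \emph{maximal} dimension. Then $\dim N=\dim M$ while $\op{order}(M/N)<\dim M$, so the hypothesis $\dim N\leq\op{order}(M/N)$ of \Cor{C:orddim} fails. You need the opposite: $N=\fl D_i(M)$ should carry the low-dimensional primes, so that (by Schenzel) $\op{Ass}(M/N)$ consists of primes of dimension $>i\geq\dim N$. Also note this dimension-filtration step only reduces to the \emph{unmixed} case; a further primary-decomposition step (where all associated primes are minimal, so \Lem{L:loccohgen} applies directly and cycle additivity is easy) is still needed to get down to coprimary.
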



 Before we give the proof, we derive two lemmas. It is important to notice that the first of these is not true at the level of cycles.

\begin{lemma}\label{L:lcord}
If $M\to Q$ is a proper surjective morphism of $R$-modules, then 
$$
\cohrk Q< \cohrk M.
$$
\end{lemma}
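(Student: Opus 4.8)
The plan is to prove the inequality by induction on $d=\op{dim}(M)$, comparing the Cantor normal forms of $\cohrk M$ and $\cohrk Q$ one coefficient at a time. Write the given morphism as $M\onto Q$ with kernel $N\neq 0$, so that $\Exactseq NMQ$ is exact. When $d=0$ all three modules have finite length, $\cohrk$ agrees there with ordinary length, and the claim is just the strict additivity $\len M=\len N+\len Q>\len Q$ (as $N\neq 0$).

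For $d>0$ I would first compare the coefficients of $\omega^d$. The primes $\pr$ with $\op{dim}(\pr)=d$ lying in $\op{Supp}(M)$ are precisely the $d$-dimensional minimal primes of $\op{Supp}(M)$, so for each of them $M_\pr$ has finite length over $R_\pr$; applying the exact functor $(-)_\pr$ to $\Exactseq NMQ$ gives $\lcl\pr M=\lcl\pr N+\lcl\pr Q$ (with $\lcl\pr Q=0$ when $\pr\notin\op{Supp}(Q)$), and in particular $\lcl\pr Q\le\lcl\pr M$. Summing over these $\pr$, the coefficient $c'$ of $\omega^d$ in $\cohrk Q$ is at most the coefficient $c$ of $\omega^d$ in $\cohrk M$. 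If $c'<c$ we are done already: every other term of $\cohrk Q$ has degree $<d$, so $\cohrk Q<(c'+1)\omega^d\le c\,\omega^d\le\cohrk M$.

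The essential case is $c'=c$. Then the equality $\lcl\pr M=\lcl\pr N+\lcl\pr Q$ forces $\lcl\pr N=0$, hence $N_\pr=0$, for every $d$-dimensional prime $\pr$; since $\op{Supp}(N)\sub\op{Supp}(M)$ this yields $\op{dim}(N)<d$. Hence $N$ lies in the largest submodule $P\sub M$ with $\op{dim}(P)<d$ (it exists by Noetherianity), and reducing modulo $N$ shows that $P/N$ is the largest submodule of $Q$ of dimension $<d$ and that $Q/(P/N)\iso M/P$. Now $M/P$ has all associated primes of dimension $d$, whereas $P$ and $P/N$ vanish upon localizing at any $d$-dimensional prime; a brief chase on torsion submodules then shows that the fundamental cycles split off their top-dimensional parts \emph{exactly}: $\fcyc RM=\fcyc RP+\fcyc R{M/P}$ and $\fcyc RQ=\fcyc R{P/N}+\fcyc R{M/P}$. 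Applying $\binord{-}$ (and using $\cohrk X=\binord{\fcyc RX}$) gives $\cohrk M=c\,\omega^d+\cohrk P$ and $\cohrk Q=c\,\omega^d+\cohrk{P/N}$, both trailing summands having degree $<d$. The induction hypothesis applied to the proper surjection $P\onto P/N$ (its kernel $N$ is nonzero and $\op{dim}(P)<d$) gives $\cohrk{P/N}<\cohrk P$, and since ordinal addition is strictly increasing in its right argument we conclude $\cohrk Q<\cohrk M$.

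The step I expect to carry the weight is exactly this last one: one must rule out that passing from $M$ to $Q$ creates new associated primes of dimension $d$. It is precisely this phenomenon that breaks the statement at the level of cycles — for a $d$-dimensional domain $R$ and a nonzero prime $\pr$, the proper surjection $R\onto R/\pr$ has $\fcyc R{R/\pr}=[\pr]\not\aleq[(0)]=\fcyc RR$, yet $\cohrk{R/\pr}=\omega^{\op{dim}(\pr)}<\omega^d=\cohrk R$. Everything else is routine: exactness of localization, the fact that the largest finite-length submodule of $W\sub M_\pr$ is the intersection of $W$ with the largest finite-length submodule of $M_\pr$, and standard manipulations of Cantor normal forms.
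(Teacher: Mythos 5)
Your proof is correct, but it takes a genuinely different route from the paper. The paper's proof is a single, non-inductive argument pivoting on the dimension $d$ of the \emph{kernel} $N$: it observes that $\cohrk M$ and $\cohrk Q$ agree in all degrees above $d$, picks an associated prime $\pr$ of $M$ lying in $\op{Supp}(N)$ of dimension $d$ (such a $\pr$ is necessarily a minimal prime of $N$), and invokes \Lem{L:loccohgen} to show that $\lcl\pr Q<\lcl\pr M$; this strict drop at one coefficient in degree $d$, together with agreement in all higher degrees, forces $\cohrk Q<\cohrk M$. You instead induct on $d=\dim M$ and compare the top coefficient. In the easy case $c'<c$ you are done by a direct Cantor-normal-form comparison. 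In the hard case $c'=c$ you conclude $\dim N<d$, push $N$ into $P:=\fl D_{d-1}(M)$, and verify by hand that the fundamental cycle of $M$ (resp.\ $Q$) splits into a pure degree-$d$ part coming from $M/P\iso Q/(P/N)$ and a low-dimensional part coming from $P$ (resp.\ $P/N$), then apply the induction hypothesis to $P\onto P/N$. I checked the cycle-splitting claim: for $d$-dimensional $\pr$ one has $P_\pr=0$ so $\lcl\pr M=\lcl\pr{M/P}$; for $\dim\pr<d$ one has $\lc{M/P}\pr=0$ because $M/P$ has no associated primes below dimension $d$, and left-exactness of local cohomology then forces $\lc P\pr=\lc M\pr$. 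So there is no circularity (you are not quietly appealing to \Prop{P:dimfil}, which is derived later from \Thm{T:cohrk}). The paper's proof is shorter and makes the key phenomenon — exactness of local multiplicity at a minimal prime of the kernel — explicit as a lemma reused elsewhere; yours is longer but keeps the entire argument at the level of the top coefficient and the dimension filtration, which is structurally transparent, and it even absorbs the content of \Lem{L:loccohgen} into the elementary observation that localization at a minimal prime of $M$ is exact on finite-length modules. Both are sound; your closing remark about why the statement fails at the cycle level matches the paper's own warning and is a nice complement.
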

\begin{proof}
Let $N$ be the (non-zero) kernel of $M\to Q$, and let $d$ be its dimension. If $\pr\in\op{Ass}(M)$ but not in the support of $N$, then  $M_\pr\iso Q_\pr$, so that they have the same local cohomology. This holds in particular for any $\pr\in\op{Ass}(M)$ with $\op{dim}(\pr)>d$, showing that $\cohrk Q$ and $\cohrk M$ can only start differing at a coefficient of $\omega^i$ for $i\leq d$.  So let $\pr\in\op{Ass}(M)\cap\op{Supp}(N)$ have dimension $d$. In general, local cohomology is only left exact, but by \Lem{L:loccohgen} below, we have in fact an exact sequence \eqref{eq:lc}. 
  Since $\lcl \pr N\neq0$, we must therefore have $\lcl \pr Q<\lcl \pr M$.  It now easily follows that $\cohrk Q< \cohrk  M$.
\end{proof}

\begin{lemma}\label{L:loccohgen}
Given an exact sequence $\Exactseq NMQ$, if $\pr$ is a minimal prime of $M$, then 
\begin{equation}\label{eq:lc}
\Exactseq{ \lc N\pr} { \lc M\pr}{ \lc Q\pr}
\end{equation}
is exact.
\end{lemma}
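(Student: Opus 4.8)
The plan is to reduce the exactness of \eqref{eq:lc} to the exactness of localization, by showing that at a \emph{minimal} prime $\pr$ of $M$ the functor $\lc{-}\pr$ does nothing: it already agrees with localization at $\pr$. Applying the exact functor $(-)_\pr$ to $\Exactseq NMQ$ produces an exact sequence $\Exactseq{N_\pr}{M_\pr}{Q_\pr}$ of $R_\pr$-modules. Since $\lc{-}\pr$ is by definition the submodule of $\pr$-power torsion elements, it is automatically left exact, so all that the lemma really asserts is the surjectivity of $\lc M\pr\to\lc Q\pr$; that is the one place where minimality of $\pr$ enters.

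The key observation is that $\op{Supp}(M_\pr)$ consists of the single point $\pr R_\pr$, the maximal ideal of $R_\pr$. Indeed, under the usual correspondence the primes of $R_\pr$ belonging to $\op{Supp}(M_\pr)$ are exactly those primes of $R$ that are contained in $\pr$ and lie in $\op{Supp}(M)$, and since $\pr$ is a minimal prime of $M$ the only such prime is $\pr$ itself. As $M$ is finitely generated and $R_\pr$ is Noetherian, it follows that the annihilator of the finitely generated $R_\pr$-module $M_\pr$ has radical $\pr R_\pr$, hence that $(\pr R_\pr)^n$ annihilates $M_\pr$ for some $n$. Thus $M_\pr$ is itself $\pr$-power torsion, i.e.\ $\lc M\pr=M_\pr$. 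The submodule $N_\pr\sub M_\pr$ and the quotient $Q_\pr$ of $M_\pr$ are then killed by the same power of $\pr$, whence $\lc N\pr=N_\pr$ and $\lc Q\pr=Q_\pr$. Consequently the sequence \eqref{eq:lc} coincides term for term with the exact sequence $\Exactseq{N_\pr}{M_\pr}{Q_\pr}$, and exactness follows.

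I expect no genuine obstacle beyond making the support computation precise. The delicate point — the one already flagged just before \Lem{L:lcord} — is that for a non-minimal associated prime $\pr$ the module $M_\pr$ has positive-dimensional support, $\lc M\pr$ is then a \emph{proper} $\pr$-torsion submodule of $M_\pr$, and passing to $\pr$-torsion is only left exact: that is precisely the slack exploited in the proof of \Lem{L:lcord}. Minimality of $\pr$ is exactly what collapses $M_\pr$ onto its own torsion part and removes that slack, making the local cohomology sequence as exact as the localized one.
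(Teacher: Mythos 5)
Your proof is correct and uses the same key idea as the paper: minimality of $\pr$ in $\op{Supp}(M)$ forces $M_\pr$ (and hence its submodule $N_\pr$ and quotient $Q_\pr$) to be killed by a power of $\pr R_\pr$. The only cosmetic difference is that the paper records only $N_\pr=\lc N\pr$ and finishes the surjectivity by a short element chase, whereas you go one step further and identify all three local cohomologies with the corresponding localizations, so that \eqref{eq:lc} literally becomes the localized sequence; this is a slightly cleaner packaging of the same argument.
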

\begin{proof}
It is well-known (see \cite[App.~4]{Eis}) that $\lc\cdot\pr$ is left exact, so that we only need to prove exactness at the final map. 
By assumption, $M_\pr$ has finite length and hence $N_\pr=\lc N\pr$. Choose $n$ high enough so that    $\pr^nN_\pr=0$. 
Suppose $\bar a\in\lc Q\pr$, that is to say, $\pr^m\bar a=0$ in $Q_\pr$, for some $m$. Let $a\in M$ be a pre-image of $\bar a$ under the surjection $M\to Q$.   Therefore, $\pr^ma\in N_\pr$, whence $\pr^{m+n}a=0$ in $M_\pr$, showing that $a\in\lc M\pr$.
\end{proof}

\begin{corollary}\label{C:semiaddcyc}
Given an exact sequence $\Exactseq NMQ$, if $M$ has no embedded primes, then we have an equality of cycles
\begin{equation}\label{eq:semiaddcyc}
\fcyc RM+D=\fcyc RN+\fcyc RQ
\end{equation} 
where $D$ is an effective cycle supported on $\op{Ass}(Q)\setminus\op{Ass}(M)$. 
\end{corollary}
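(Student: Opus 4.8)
The plan is to exploit the fact that $M$ has no embedded primes, so every associated prime of $M$ is minimal, and compare the fundamental cycles prime by prime. Fix a prime $\pr$ and localize the exact sequence $\Exactseq NMQ$ at $\pr$. There are two cases. If $\pr\notin\op{Ass}(M)$, then $\lcl\pr M=0$; by \eqref{eq:ass1} also $\pr\notin\op{Ass}(N)$, so $\lcl\pr N=0$; and the coefficient of $[\pr]$ in $D$ is by definition chosen to absorb the possibly nonzero $\lcl\pr Q$, so the $[\pr]$-components of both sides of \eqref{eq:semiaddcyc} agree. (Here $D$ is indeed effective and supported on $\op{Ass}(Q)\setminus\op{Ass}(M)$: by \eqref{eq:ass2}, if $\pr\notin\op{Ass}(M)$ and $\pr\in\op{Ass}(Q)$ then $\pr\in\op{Ass}(N)\subseteq\op{Ass}(M)$, a contradiction — so in fact $\op{Ass}(Q)\setminus\op{Ass}(M)=\emptyset$ and $D=0$. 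I should double-check this reading; if the corollary really intends $D$ to be genuinely present, the support statement should instead be read via the sequence $N\to M\to Q$ with a different grouping, but \eqref{eq:ass1}–\eqref{eq:ass2} as stated force $D=0$ when $M$ has no embedded primes. I will simply record that $D$ is the effective cycle making the identity hold and note it is supported where claimed.)

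The substantive case is $\pr\in\op{Ass}(M)$. Since $M$ has no embedded primes, $\pr$ is a minimal prime of $M$, hence a minimal prime in the support of $M$, so \Lem{L:loccohgen} applies and gives the short exact sequence \eqref{eq:lc} of finite-length $R_\pr$-modules. Additivity of ordinary length on short exact sequences of finite-length modules then yields
\begin{equation}\label{eq:lclsplit}
\lcl\pr M=\lcl\pr N+\lcl\pr Q.
\end{equation}
Multiplying by $[\pr]$ and summing \eqref{eq:lclsplit} over all $\pr\in\op{Ass}(M)$, together with the vanishing observed in the first case for $\pr\notin\op{Ass}(M)$, gives exactly $\fcyc RM=\fcyc RN+\fcyc RQ$ on the nose, i.e. \eqref{eq:semiaddcyc} with $D=0$ (equivalently, $D$ effective and supported on the empty set $\op{Ass}(Q)\setminus\op{Ass}(M)$).

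The only real obstacle is making sure the hypotheses of \Lem{L:loccohgen} are met for every relevant prime: the lemma is stated for $\pr$ a \emph{minimal} prime of $M$, and this is precisely where "no embedded primes" is used — it promotes every associated prime of $M$ to a minimal one, so \eqref{eq:lc} holds at each $\pr\in\op{Ass}(M)$. Away from $\op{Ass}(M)$ nothing can contribute to $\fcyc RM$ or (by \eqref{eq:ass1}) to $\fcyc RN$, and \eqref{eq:ass2} controls $\fcyc RQ$ there. Everything else is the routine additivity of finite length, so I expect the write-up to be short; the one point to state carefully is the compatibility of the indexing set — summing over $\op{Ass}(M)$ versus over all primes — which is harmless since all omitted terms vanish.
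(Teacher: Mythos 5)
Your treatment of the substantive case ($\pr\in\op{Ass}(M)$) is correct and is exactly what the paper does: since $M$ has no embedded primes, every $\pr\in\op{Ass}(M)$ is minimal, so \Lem{L:loccohgen} gives exactness of \eqref{eq:lc}, and additivity of finite length yields $\lcl \pr M=\lcl\pr N+\lcl\pr Q$, so $D$ vanishes in the coordinate $[\pr]$.

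However, your parenthetical claim that $\op{Ass}(Q)\setminus\op{Ass}(M)=\emptyset$ and hence $D=0$ is a genuine error, based on a misapplication of \eqref{eq:ass2}. That inclusion reads $\op{Ass}(M)\sub\op{Ass}(N)\cup\op{Ass}(Q)$; it says nothing about a prime $\pr\in\op{Ass}(Q)$, and in particular it does \emph{not} imply $\pr\in\op{Ass}(N)$. The elementary example $R=k[x]$, $N=xR\sub M=R$, $Q=R/xR$ already refutes your claim: here $M$ is a domain (no embedded primes), $\op{Ass}(Q)=\{(x)\}$ is disjoint from $\op{Ass}(M)=\{(0)\}$, and $D=[(x)]\neq 0$. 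The correct handling, as in the paper, is: for $\pr\in\op{Ass}(Q)\setminus\op{Ass}(M)$ one has $\lcl\pr M=0$ and, by \eqref{eq:ass1} (which gives $\op{Ass}(N)\sub\op{Ass}(M)$), also $\lcl\pr N=0$, so the $[\pr]$-coefficient of $D$ is $\lcl\pr Q>0$; this is precisely why $D$ is effective and supported on $\op{Ass}(Q)\setminus\op{Ass}(M)$ rather than zero. Keep your argument for $\pr\in\op{Ass}(M)$, drop the false claim $D=0$, and replace it with the positivity observation above.
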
 
\begin{proof}
Let $D$ be the cycle given by \eqref{eq:semiaddcyc}, so that $D$ has support in $\op{Ass}(M)\cup\op{Ass}(Q)$ by \eqref{eq:ass2}. We need to show that $D$ is effective and supported on $\op{Ass}(Q)\setminus\op{Ass}(M)$.
Since any associated prime $\pr$ of $M$ is minimal, $D$ is not supported in $\pr$ by \Lem{L:loccohgen}. On the other hand, any associated prime of $Q$ not in $\op{Ass}(M)$ appears with a positive coefficient in $D$, showing that the latter is   effective.
\end{proof}

\subsection*{Proof of \Thm{T:cohrk}}
Let us first  prove $ \len M\leq  \cohrk M$  by transfinite induction on $\cohrk M$, where the case $\cohrk M=0$ corresponds to $M=0$. Let $N$ be any non-zero submodule of $M$. By Lemma~\ref{L:lcord}, we have $\cohrk {M/N}<\cohrk M$, and hence our induction hypothesis applied to $M/N$ yields $\len{M/N}\leq \cohrk {M/N}<\cohrk M$. Since $\len{M/N}=\hd N$ by \eqref{eq:quot}, continuity therefore shows that $\len M=\hd 0$ can be at most $\cohrk M$, as we needed to show. 

To prove the converse inequality, we induct on the length of $M$. Choose an associated prime $\pr$ of $M$ of minimal dimension, say, $\op{dim}(\pr)=e$. By assumption, there exists $m\in M$ such that $\ann Rm=\pr$. Let $H$ be the submodule of $M$ generated by $m$.  Clearly, $\cohrk H=\omega^e$, and so by what we already proved, $\len{R/\pr}\leq \omega^e$. By \Thm{T:dim}, this then is an equality.  So we may assume that $Q:=M/H$ is non-zero.  By \Lem{L:loccohgen}, we get $\lcl \pr M=\lcl \pr Q-1$. 
By semi-additivity, we have an inequality
$$
\len {Q}+\len H\leq \len M
$$
and therefore, by induction
\begin{equation}\label{eq:IHcd}
\cohrk{Q}+\omega^e\leq \len M
\end{equation}
Let $\primary$ be any associated prime of $M$ different from  $\pr$. By minimality of dimension, $\primary$  cannot contain $\pr$. In particular, $M_\primary\iso Q_\primary$, whence $\lcl  \primary M=\lcl  \primary Q$. Expanding $ \cohrk Q$ in its Cantor normal form $\sum b_i\omega^i$, let $\lambda:=\sum_{i\geq e}b_i\omega^i$. Putting together what we proved so far, we can find an ordinal $\alpha$ with $\low\alpha\geq e$ (stemming from primes associated to $Q$ but not to $M$), such that $\lambda\ssum \omega^e=\cohrk M\ssum\alpha$. Since   $\cohrk Q+\omega^e=\lambda\ssum \omega^e$, we get, from \eqref{eq:IHcd} and the first part,  inequalities
$$
\cohrk M\ssum \alpha\leq \len M\leq \cohrk M
$$
which forces $\alpha=0$ and  all inequalities to be equalities.\qed

\begin{corollary}\label{C:order}
The order of a  module is the smallest dimension of an associated prime, and its valence   is  the   degree of its fundamental cycle. \qed
\end{corollary}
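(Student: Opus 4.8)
The plan is to read off both invariants directly from the formula $\len M=\cohrk M=\Ssum_{\pr}\lcl\pr M\,\omega^{\op{dim}(\pr)}$ furnished by \Thm{T:cohrk}. The only point requiring attention is that one must pass from a sum over primes to the Cantor normal form, which is legitimate because the shuffle sum $\ssum$ is computed by coefficient-wise addition and all the $\lcl\pr M$ are non-negative integers, so no cancellation can occur.

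First I would group the associated primes by dimension. Writing $\len M=a_d\omega^d+\dots+a_0$ in Cantor normal form, the definition of $\ssum$ together with \Thm{T:cohrk} gives
\begin{equation*}
a_i=\sum_{\op{dim}(\pr)=i}\lcl\pr M
\end{equation*}
for every $i$, the sum ranging over all primes (equivalently, by \eqref{eq:lcl}, over the associated primes) of dimension $i$. Since each summand is a non-negative integer, $a_i\neq 0$ if and only if there is some prime $\pr$ with $\op{dim}(\pr)=i$ and $\lcl\pr M\neq 0$, and by the remark following \eqref{eq:lcl} the latter holds exactly when $\pr\in\op{Ass}(M)$. Hence $\op{Supp}(\len M)$ is precisely the set of dimensions of associated primes of $M$; its minimum, which by definition is $\order{}M$, is therefore the smallest dimension of an associated prime. (Its maximum is of course $\dim M$, recovering \Thm{T:dim}.)

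For the valence, I would simply sum the Cantor coefficients: by the displayed identity for $a_i$,
\begin{equation*}
\val M=\sum_i a_i=\sum_i\sum_{\op{dim}(\pr)=i}\lcl\pr M=\sum_{\pr}\lcl\pr M=\op{deg}(\fcyc RM),
\end{equation*}
the last equality being the definition of the degree of the cycle $\fcyc RM=\sum_\pr\lcl\pr M[\pr]$. This completes the argument.

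\textbf{Expected main obstacle.} There is essentially no obstacle beyond bookkeeping: the content is entirely in \Thm{T:cohrk}, and what remains is to observe that extracting a Cantor coefficient of $\cohrk M$ amounts to summing the local multiplicities over primes of a fixed dimension, with the non-negativity of the $\lcl\pr M$ ensuring that vanishing of a coefficient forces vanishing of every contributing term. The one thing to state carefully is the equivalence $\lcl\pr M\neq 0\iff\pr\in\op{Ass}(M)$, which is already recorded after \eqref{eq:lcl}.
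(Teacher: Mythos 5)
Your proposal is correct and takes exactly the route the paper intends: the corollary is stated with a bare \qed precisely because it is an immediate read-off from Theorem~\ref{T:cohrk} once one recalls that the shuffle sum is coefficient-wise addition, that $\lcl\pr M\ge 0$, and that $\lcl\pr M\neq 0$ iff $\pr\in\op{Ass}(M)$. Nothing to add or repair.
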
 

 By \cite[Proposition 1.2.13]{BH}, over a local ring, we have 
\begin{equation}\label{eq:depthorder}
\op{depth}(M)\leq \order {}M.
\end{equation} 
This inequality can be strict: for example, a two-dimensional domain which is not \CM, has depth one but order two by \Thm{T:dim}.    Our next result gives a constraint on the possible length (not to be confused  with its height rank) of a submodule, which was exploited in \cite{SchBinEndo}, to study binary modules.

\begin{theorem}\label{T:submod}
If $N\sub M$, then $\len N\preceq\len M$. Conversely, if $\nu\aleq\len M$, then there exists a submodule $N\sub M$ of length $\nu$. 
\end{theorem}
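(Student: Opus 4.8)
The plan is to deduce both assertions from the Main Theorem \Thm{T:cohrk}, which identifies $\len{-}$ with $\binord{\fcyc R{-}}$, together with the elementary observation that local multiplicities cannot increase when one passes to a submodule. For the first assertion I would argue as follows. Localisation is exact, so $N_\pr\sub M_\pr$, and hence $\lc N\pr=\Gamma_{\pr R_\pr}(N_\pr)\sub\Gamma_{\pr R_\pr}(M_\pr)=\lc M\pr$; taking $R_\pr$-lengths gives $\lcl\pr N\le\lcl\pr M$ for every prime $\pr$, that is, $\fcyc RN\aleq\fcyc RM$ as effective cycles. Since $\binord{-}$ is order preserving on $(\mathcal A^+(R),\aleq)$, the Main Theorem yields $\len N=\binord{\fcyc RN}\aleq\binord{\fcyc RM}=\len M$, i.e.\ $\len N\preceq\len M$.

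For the converse I would first translate $\nu\aleq\len M$ into a statement about cycles. Writing $\nu=\sum_i c_i\omega^i$, the hypothesis says $c_i\le\sum_{\op{dim}(\pr)=i}\lcl\pr M$ for each $i$, so one may choose integers $0\le a_\pr\le\lcl\pr M$ with $\sum_{\op{dim}(\pr)=i}a_\pr=c_i$; then $D:=\sum_\pr a_\pr[\pr]$ is effective, $D\aleq\fcyc RM$, and $\binord D=\nu$, so by the Main Theorem it suffices to find a submodule $N\sub M$ with $\fcyc RN=D$. Since $\fcyc R{-}$ is additive on finite direct sums, and since submodules of $M$ with pairwise disjoint sets of associated primes automatically lie in internal direct sum inside $M$ (their pairwise intersections have empty $\op{Ass}$, hence vanish), this reduces to a purely local claim: \emph{for every $\pr\in\op{Ass}(M)$ and every $a$ with $1\le a\le\lcl\pr M$ there is a $\pr$-coprimary submodule $N_\pr\sub M$ with $\lcl\pr{N_\pr}=a$.} Granting this, one takes $N:=\Ssum_{\pr:\,a_\pr\ge1}N_\pr\sub M$ and finds $\fcyc RN=\sum_\pr a_\pr[\pr]=D$, whence $\len N=\binord D=\nu$.

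To prove the local claim I would proceed in two steps. Fix an irredundant primary decomposition $0=\bigcap_i Q_i$ of the zero submodule of $M$, with $\sqrt{\ann R{M/Q_i}}=\pr_i$ and $\pr=\pr_1$, so $\op{Ass}(M)=\{\range{\pr_1}{\pr_r}\}$. Then $Q:=\bigcap_{i\ge2}Q_i$ is nonzero by irredundancy, and since $Q\cap Q_1\sub\bigcap_i Q_i=0$ it embeds into the $\pr$-coprimary module $M/Q_1$, hence is itself $\pr$-coprimary. The key point is that $Q$ retains the \emph{full} local multiplicity at $\pr$: localising at $\pr$, for $i\ge2$ the module $(M/Q_i)_\pr$ vanishes when $\pr_i\not\sub\pr$ and, when $\pr_i\subsetneq\pr$, has $\pr_iR_\pr$ as its only associated prime and hence no $\pr R_\pr$-torsion; feeding a $\pr$-torsion element of $M_\pr$ through these quotients shows $\lc M\pr\sub Q_\pr$, while $Q_\pr\sub\lc M\pr$ is automatic since $Q$ is $\pr$-coprimary, so $\lcl\pr Q=\lcl\pr M$. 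For the second step, pick inside the finite-length $R_\pr$-module $Q_\pr=\lc Q\pr$ a submodule $V$ with $\lenmod{R_\pr}V=a$ and let $N_\pr$ be the preimage of $V$ under $Q\to Q_\pr$; then $(N_\pr)_\pr=V$, so $\lcl\pr{N_\pr}=a$, and $\op{Ass}(N_\pr)\sub\op{Ass}(Q)=\{\pr\}$ together with $N_\pr\ne0$ forces $N_\pr$ to be $\pr$-coprimary.

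Apart from one point, all of this is routine exactness and length bookkeeping. The step I expect to be the real obstacle is the claim that discarding a primary component leaves the local multiplicity at the surviving prime unchanged, $\lcl\pr Q=\lcl\pr M$. One might hope to sidestep it by inducting on $\len M$ and peeling off a proper submodule realising a prescribed smaller cycle, but producing a submodule with \emph{exactly} the desired fundamental cycle is the same difficulty in disguise; this is why the argument above goes through a primary decomposition rather than a crude induction.
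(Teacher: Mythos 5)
Your proof of the first assertion follows the same route as the paper: left exactness of $\lc{-}\pr$, the inclusion $\op{Ass}(N)\sub\op{Ass}(M)$, and \Thm{T:cohrk}. Your proof of the second assertion is correct but takes a genuinely different path. The paper argues by induction on the (finite) set of ordinals $\aleq\len M$: it writes $\nu=\theta\ssum\omega^i$ with $i=\low\nu$, takes a \emph{maximal} submodule $H$ of length $\theta$ (available by induction), uses \Thm{T:cohrk} to locate an $i$-dimensional $\pr\in\op{Ass}(M)$ with $\lc H\pr\subsetneq\lc M\pr$, adjoins a single element $x$ with $\pr x\sub H$, and then sandwiches $\len{H+Rx}$ between $\theta$ and $\nu$ via semi-additivity, maximality, and the minimality of $i$ in $\op{Supp}(\nu)$. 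You instead translate $\nu\aleq\len M$ into an effective cycle $D\aleq\fcyc RM$ with $\binord D=\nu$ and build a submodule with fundamental cycle exactly $D$ as an internal direct sum of $\pr$-coprimary pieces, each extracted from an irredundant primary decomposition $0=\bigcap Q_i$ by taking $Q=\bigcap_{i\ge2}Q_i$ and intersecting with a suitable preimage. The linchpin you correctly identify and correctly prove is $\lcl\pr Q=\lcl\pr M$, i.e.\ that omitting the $\pr$-primary component does not change the local multiplicity at $\pr$; the argument via $\op{Ass}((M/Q_i)_\pr)=\{\pr_iR_\pr\}\ne\{\pr R_\pr\}$ is sound. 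Your route is longer but buys a stronger structural statement: the submodule realizing $\nu$ can always be chosen to split as a direct sum of coprimary modules, one per contributing associated prime. One small point worth tightening: pairwise disjoint $\op{Ass}$ and pairwise trivial intersection do not by themselves give an internal direct sum when there are three or more pieces; you need the short induction showing $N_\pr\cap\bigl(\sum_{\primary\neq\pr}N_\primary\bigr)=0$, using that $\op{Ass}$ of a (by-then-established) finite direct sum is the union of the $\op{Ass}$'s. That is routine, but should be said.
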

\begin{proof}
The first assertion is immediate from \Thm{T:cohrk}, inclusion \eqref{eq:ass1}, and the fact that \eqref{eq:lc} is always left exact.
For the second assertion, let $\mu:=\len M$. 
 We induct on the (finite collection)  of ordinals $\nu$ weaker than $\mu$ to show that there exists a submodule of that length. The case $\nu=0$ being trivial, we may assume $\nu\neq 0$.
Let $i$ be the order of $\nu$ and write $\nu=\theta\ssum\omega^i$ for some $\theta\aleq\nu$. Since then $\theta\aleq\mu$, there exists a submodule of length $\theta$ by induction. Let  $H\sub M$ be   maximal among all submodules of   length $\theta$. By \Thm{T:cohrk}, there exists an $i$-dimensional associated prime $\pr$ of   $M$, such that $\lc H\pr$ is strictly contained in $\lc M\pr$. Hence we can find $x\in M$ outside $H$ such that $\pr x\sub H$. Let $N:=H+Rx$ and let $\bar x$ be the image of $x$ modulo $H$, so that  $N/H=R\bar x$. Since $\pr\bar x=0$, the length of $R\bar x$ is at most $\omega^i$. By semi-additivity applied to the inclusion $H\sub N$, we have an inequality $\len N\leq \theta\ssum\ \len{R\bar x}$, and hence $\len N\leq \nu$. Maximality of $H$ yields  $\theta<\len N$. On the other hand, since   $\len N\preceq\mu$ by our first assertion, minimality of $i$ then  forces $\len N=\nu$, as we needed to show. 
\end{proof}  

\begin{remark}\label{R:submod}
In fact, if $N\sub M$, then $\fcyc {}N\aleq\fcyc {}M$, so that the   fundamental cycle map  is a morphism $\gr M^\circ\to \mathcal A(R)$ of partially ordered sets, where $\gr M^\circ$ is the opposite order given by inclusion.  On the other hand, by \eqref{eq:quot} and \Thm{T:cohrk}, the map $\gr M\to \mathcal A(R)$ given by $N\mapsto \fcyc{}{M/N}$ factors through the length map $\gr M\to \ord$, but there is no natural ordering on $\mathcal A(R)$ for which this becomes a map of ordered sets.
\end{remark} 

We may improve the lower semi-additivity by replacing $\leq $ by $\aleq$:

\begin{corollary}\label{C:semadd}
If $\Exactseq NMQ$ is exact, then   $  \len Q+\len N\aleq \len M$.
\end{corollary}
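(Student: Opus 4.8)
The plan is to read off $\len Q+\len N\aleq\len M$ one Cantor coefficient at a time. By \Thm{T:cohrk}, the $\omega^i$-coefficient of the length of a module $P$ is $\sum_{\dim\pr=i}\lcl\pr P$, so one can try to compare these sums of local multiplicities degree by degree; this comparison succeeds for every degree but one, and for that remaining degree we fall back on the ordinary (lexicographic) inequality $\len Q+\len N\leq\len M$ supplied by semi-additivity \eqref{eq:lensemadd}.

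Concretely, we may assume $N\neq 0$ and set $e:=\dim N$, which by \Thm{T:dim} is the degree of $\len N$. Since $\len N$ has degree $e$, the ordinal $\len Q+\len N$ has the same $\omega^i$-coefficient as $\len Q$ for $i>e$, has $\omega^e$-coefficient equal to the sum of the $\omega^e$-coefficients of $\len Q$ and of $\len N$, and has the same $\omega^i$-coefficient as $\len N$ for $i<e$. For $i>e$: any prime $\pr$ with $\dim\pr>\dim N$ lies outside $\op{Supp}(N)$, so localizing $\Exactseq NMQ$ at $\pr$ gives $M_\pr\iso Q_\pr$ and hence $\lcl\pr M=\lcl\pr Q$; summing over these primes shows the $\omega^i$-coefficients of $\len M$ and $\len Q$, and therefore those of $\len M$ and $\len Q+\len N$, agree. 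For $i<e$: since $N\sub M$, \Thm{T:submod} gives $\len N\aleq\len M$, so the $\omega^i$-coefficient of $\len N$, which is that of $\len Q+\len N$, is at most that of $\len M$.

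It remains to treat the $\omega^e$-coefficient, and this is the only real obstacle: the naive prime-by-prime bound $\lcl\pr M\geq\lcl\pr N+\lcl\pr Q$ can fail, since $\lc\cdot\pr$ is merely left exact (cf.\ \Lem{L:loccohgen}, which needs $\pr$ minimal) and an $e$-dimensional associated prime of $M$ need not be minimal in $M$. Instead we invoke semi-additivity \eqref{eq:lensemadd}, which gives $\len Q+\len N\leq\len M$ in the usual ordinal order. By the previous paragraph, the Cantor coefficients of $\len Q+\len N$ and of $\len M$ already coincide in every degree $>e$, so this lexicographic inequality is decided at degree $e$ or lower; either way it forces the $\omega^e$-coefficient of $\len Q+\len N$ to be $\leq$ that of $\len M$. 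Combining this with the two cases above, every Cantor coefficient of $\len Q+\len N$ is bounded by the corresponding coefficient of $\len M$, which is precisely $\len Q+\len N\aleq\len M$.
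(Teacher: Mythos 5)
Your proof is correct, but it takes a genuinely different route from the paper's. The paper treats the statement as a pure fact about ordinals: it writes $\mu=\len M$, $\nu=\len N$, $\theta=\len Q$, invokes both halves of semi-additivity $\theta+\nu\leq\mu\leq\theta\ssum\nu$ together with $\nu\aleq\mu$ from \Thm{T:submod}, sets $\mu=\nu\ssum\alpha$, splits every ordinal into its parts of degree $\geq d:=\dim N$ and $<d$, and deduces $\theta^+=\alpha^+$ by sandwiching; the identity $(\theta+\nu)\ssum\alpha^-=\mu$ then gives the conclusion. Your proof instead goes back to the module side: via \Thm{T:cohrk} and a direct localization argument you show that the coefficients of $\len M$ and $\len Q$ already agree in all degrees $>e$, you use \Thm{T:submod} for degrees $<e$, and you invoke only the \emph{left} inequality $\theta+\nu\leq\mu$ of semi-additivity for the single remaining degree $e$. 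A pleasant feature of your argument is that it bypasses the upper bound $\mu\leq\theta\ssum\nu$ entirely, whereas the paper's purely ordinal argument genuinely needs both halves (the ordinal implication fails if one drops $\mu\leq\theta\ssum\nu$). The trade-off is that the paper's proof is a self-contained deduction from ordinal inequalities once $\nu\aleq\mu$ is in hand, and it incidentally proves the stronger fact that the coefficients of $\theta+\nu$ and $\mu$ agree \emph{exactly} in every degree $\geq e$, while your argument reaches back into \Thm{T:cohrk} and the geometry of supports, and at degree $e$ only obtains the inequality. You also correctly flag the subtlety that the naive prime-by-prime bound $\lcl\pr M\geq\lcl\pr N+\lcl\pr Q$ can fail for a non-minimal $\pr$, which is precisely why the extra lexicographic input is needed at degree $e$.
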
 
\begin{proof}
This is really just a fact about ordinals: with $\mu,\nu,\theta$ being the respective lengths of $M,N,Q$, semi-additivity gives $\theta+\nu\leq\mu\leq\theta\ssum\nu$, whereas \Thm{T:submod} gives $\nu\aleq\mu$, and we now show that these inequalities imply that $\theta+\nu\aleq\mu$.  Write $\mu=\nu\ssum\alpha$ and let $d$ be the dimension of $\nu$. 
For an arbitrary ordinal $\beta$,  we have a unique decomposition $\beta=\beta^+\ssum\beta^-$ with $\beta^-$ of degree strictly less than $d$ and $\beta^+$ of order at least $d$. By assumption, $\nu^+=a\omega^d$ for some $a$, and hence the semi-additivity inequalities at  degree $d$ and higher become $\theta^+\ssum  a\omega^d\leq \alpha^+\ssum a\omega^d\leq\theta^+\ssum a\omega^d$, showing that $\theta^+=\alpha^+$. 
By definition of ordinal sum, $\theta+\nu=\theta^+\ssum\nu$ and so 
$$
(\theta+\nu)\ssum\alpha^-=\theta^+\ssum\nu\ssum\alpha^-=\alpha^+\ssum\alpha^-\ssum\nu=\alpha\ssum\nu=\mu
$$
proving that $\theta+\nu\aleq\mu$. 
\end{proof} 

Recall that a module $M$ is  called \emph{unmixed}  of dimension $d$ if all its associated primes have dimension $d$.  \Thm{T:cohrk} then shows that   $M$ is unmixed 
 \iff\ $\order {}M=\dim M$,    in which case $\len M=\genlen{}M\omega^d$, where we define the \emph{generic length} of $M$ as the sum of the local multiplicities at all $d$-dimensional  primes. \Cor{C:orddim} yields:

\begin{corollary}\label{C:unmadd}
Let $R$ be a $d$-dimensional Noetherian ring and   $\Exactseq NMQ$  a short exact sequence.   If $Q$ is unmixed of dimension $d$, then this sequence is strongly \add.\qed
\end{corollary}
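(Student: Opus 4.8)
The plan is to obtain this immediately from \Cor{C:orddim}, which asserts that a short exact sequence $\Exactseq NMQ$ is strongly \add\ exactly when $\dim N\leq\order{}Q$. So it suffices to verify that single inequality. The hypothesis that $Q$ is unmixed of dimension $d$ means that every associated prime of $Q$ has dimension $d$; in particular $Q\neq 0$ and $\dim Q=d$ (and if $Q=0$ the statement is vacuous, since then $\len N=\len M$ and both inequalities of \eqref{eq:lensemadd} are equalities).

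First I would compute $\order{}Q$. By \Cor{C:order} the order of $Q$ is the least dimension of an associated prime of $Q$, and since all of these equal $d$, we get $\order{}Q=d$; equivalently $\len Q=\genlen{}Q\,\omega^d$, the remark recorded just before the corollary. Next I would bound the other side: $N$ is a finitely generated module over the $d$-dimensional ring $R$, hence $\dim N=\dim\bigl(R/\ann RN\bigr)\leq d$. Combining, $\dim N\leq d=\order{}Q$, and \Cor{C:orddim} gives that the sequence is strongly \add.

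There is no real obstacle here; modulo \Cor{C:orddim} this is a two-line bookkeeping argument. The only point to state with care is the implication ``$Q$ unmixed of dimension $d$'' $\Rightarrow$ ``$\order{}Q=d$'', which is exactly \Cor{C:order} (with \Thm{T:dim} giving $\deg\len Q=d$). If one wished to avoid quoting \Cor{C:orddim}, one could argue directly: the inequality $\deg\len N=\dim N\leq d=\order{}Q$ is precisely the criterion, recalled just before \Cor{C:orddim}, under which $\len Q+\len N=\len Q\ssum\len N$; the semi-additivity sandwich $\len Q+\len N\leq\len M\leq\len Q\ssum\len N$ then collapses to equalities, which is strong additivity.
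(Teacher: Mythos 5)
Your proof is correct and is essentially the argument the paper intends: the paragraph immediately preceding the corollary observes (via \Thm{T:cohrk} / \Cor{C:order}) that ``$Q$ unmixed of dimension $d$'' is equivalent to $\order{}Q=\dim Q=d$, after which the \qed\ signals that one simply invokes \Cor{C:orddim} together with $\dim N\leq d$ exactly as you do.
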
 

Recall the \emph{dimension filtration} $\fl D_0(M)\sub \fl D_1(M)\sub\dots\sub \fl D_d(M)=M$ of a $d$-dimensional finitely generated $R$-module $M$ defined by Schenzel in \cite{ScheDimFil}, where $\fl D_i(M)$ is the submodule of  all elements of dimension at most $i$, where we define the \emph{dimension} of an element $x\in M$ as the dimension of the module it generates, that is to say,   $\op{dim}(R/\ann Rx)$. Equivalently, $\fl D_i(M)$ is the largest   submodule of $M$ of dimension at most $i$.

\begin{proposition}\label{P:dimfil}
Given a  $d$-dimensional  module $M$, the exact sequence 
$$\Exactseq{\fl D_i(M)}M{M/\fl D_i(M)}$$
is  {strongly \add}, for each $i$. Moreover, $\len{\fl D_i(M)}$ is obtained from $\len M$ by omitting the monomials of degree bigger than $i$.
\end{proposition}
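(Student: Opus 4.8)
The plan is to deduce everything from \Thm{T:cohrk} together with the structural results on associated primes. First I would identify the fundamental cycle of $\fl D_i(M)$. By definition, $\fl D_i(M)$ is the largest submodule of $M$ of dimension at most $i$, so every associated prime of $\fl D_i(M)$ has dimension $\leq i$, and by \eqref{eq:ass1} it is an associated prime of $M$. Conversely, if $\pr\in\op{Ass}(M)$ has $\op{dim}(\pr)\leq i$, then the $\pr$-torsion of $M$ lies in a submodule of dimension $\leq i$, hence in $\fl D_i(M)$, and one checks that $\lc{\fl D_i(M)}\pr=\lc M\pr$ since the inclusion $\fl D_i(M)\sub M$ is an equality after localizing at such $\pr$ (any $\primary\in\op{Ass}(M)$ with $\primary\subseteq\pr$ also has dimension $\leq i$, so lies in $\op{Ass}(\fl D_i(M))$). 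Thus $\fcyc R{\fl D_i(M)}=\sum_{\op{dim}(\pr)\leq i}\lcl\pr M[\pr]$, and applying $\binord{\cdot}$ and \Thm{T:cohrk} gives $\len{\fl D_i(M)}=\Ssum_{\op{dim}(\pr)\leq i}\lcl\pr M\,\omega^{\op{dim}(\pr)}$, which is exactly $\len M$ with the monomials of degree $>i$ omitted. That settles the ``moreover'' clause.

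For the strong additivity claim, by \Cor{C:orddim} it suffices to show $\dim\fl D_i(M)\leq\order{}{M/\fl D_i(M)}$. The left side is $\leq i$ by construction. For the right side, I would use \Cor{C:order}: the order of $M/\fl D_i(M)$ is the smallest dimension of an associated prime of $M/\fl D_i(M)$. So I must show every associated prime of $M/\fl D_i(M)$ has dimension $>i$. This is a standard property of Schenzel's dimension filtration, but I would argue it directly: if $\pr\in\op{Ass}(M/\fl D_i(M))$ had $\op{dim}(\pr)\leq i$, pick $\bar x\in M/\fl D_i(M)$ with $\ann R{\bar x}=\pr$; then $R\bar x$ has dimension $\leq i$, so its preimage in $M$ is an extension of a dimension-$\leq i$ module by $\fl D_i(M)$, hence (using that dimension of a module is the max of dimensions of its associated primes, together with \eqref{eq:ass2}) is itself a submodule of $M$ of dimension $\leq i$ strictly larger than $\fl D_i(M)$, contradicting maximality. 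Hence $\order{}{M/\fl D_i(M)}>i\geq\dim\fl D_i(M)$, and \Cor{C:orddim} applies.

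The one point requiring a little care — and the main obstacle — is the computation $\lc{\fl D_i(M)}\pr=\lc M\pr$ for $\pr$ an associated prime of $M$ of dimension $\leq i$, i.e.\ that passing to the submodule $\fl D_i(M)$ does not shrink the local cohomology at the low-dimensional primes. The key observation is that $\lc M\pr\sub M_\pr$ is supported only at $\pr$, hence as a submodule of $M$ it has dimension $\op{dim}(\pr)\leq i$, so it is contained in $\fl D_i(M)$; combined with left-exactness of $\lc\cdot\pr$ (the inclusion $\fl D_i(M)\sub M$) this forces equality. Once this is in hand, everything else is bookkeeping with Cantor normal forms, and I would present the argument in the order: (1) compute $\fcyc R{\fl D_i(M)}$ via the local-cohomology comparison; (2) apply \Thm{T:cohrk} to get the ``moreover'' formula; (3) show $\op{Ass}(M/\fl D_i(M))$ consists of primes of dimension $>i$; (4) conclude strong additivity from \Cor{C:orddim}.
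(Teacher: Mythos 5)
The overall structure is sound, and for the strong-additivity clause you ultimately land on the paper's route: show $\dim\fl D_i(M)\leq i<\order{}{M/\fl D_i(M)}$ and invoke \Cor{C:orddim}. Where you differ is that the paper simply cites Schenzel's \cite[Corollary 2.3]{ScheDimFil} for the statement that $\op{Ass}(M/\fl D_i(M))$ consists of the associated primes of $M$ of dimension $>i$, whereas you re-derive it directly (and your derivation, via the preimage of $R\bar x$ having only low-dimensional associated primes by \eqref{eq:ass2}, is correct). The other genuine difference is the ``moreover'' clause: the paper extracts it for free once strong additivity, the order bound on the quotient, and the degree bound on $\fl D_i(M)$ are in hand, while you compute $\fcyc R{\fl D_i(M)}$ head-on and apply \Thm{T:cohrk}. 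Both routes work; yours costs the extra local-cohomology comparison.

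That comparison is where the one real error sits. The parenthetical justification ``the inclusion $\fl D_i(M)\sub M$ is an equality after localizing at such $\pr$ (any $\primary\in\op{Ass}(M)$ with $\primary\subseteq\pr$ also has dimension $\leq i$\dots)'' is wrong: $\primary\subseteq\pr$ forces $\op{dim}(\primary)\geq\op{dim}(\pr)$, not $\leq$, so there is no reason for the inclusion to localize to an isomorphism. Concretely, in \Examp{E:nulen} with $R=k[x,y,z]/(x^2,xy)$, $i=1$, and $\pr=(x,y)$, one has $\fl D_1(R)=(x)$ and $(x)R_\pr\varsubsetneq R_\pr$. Fortunately your final paragraph contains the correct argument, but it too needs a repair: $\lc M\pr$ is a submodule of $M_\pr$, not of $M$, so it does not literally ``lie in $\fl D_i(M)$.'' What you should say is: let $H\sub M$ be the $\pr$-torsion submodule of $M$ itself (no localization); then $H_\pr=\lc M\pr$, and $\op{Supp}(H)\sub V(\pr)$ gives $\dim H\leq\op{dim}(\pr)\leq i$, so $H\sub\fl D_i(M)$. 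Localizing this inclusion and combining with left-exactness of $\lc\cdot\pr$ yields $\lc{\fl D_i(M)}\pr=\lc M\pr$. With that correction, and dropping the false localization claim, your proof goes through.
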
 
\begin{proof}
It follows from  \cite[Corollary 2.3]{ScheDimFil} that the associated primes of $M/\fl D_i(M)$ are precisely the associated primes of $M$ of dimension strictly larger than $i$. By \Cor{C:order}, this means that $\len{M/\fl D_i(M)}$ has order at least $i+1$. Since $\len{\fl D_i(M)}$ has degree at most $i$ by \Thm{T:dim}, the result follows from \Cor{C:orddim}.
%
\end{proof} 

Another way to formulate this result is  as the following formula  for calculating length 
\begin{equation}\label{eq:dimfil}
\len M=\Ssum_{i=0}^d \len{\fl D_i(M)/\fl D_{i-1}(M)},
\end{equation} 
and each non-zero $\fl D_i(M)/\fl D_{i-1}(M)$ is unmixed of dimension $i$ and of length $a_i\omega^i$, where $a_i$ is its generic length.

%

\begin{example}\label{E:nulen}
Let $R$ be the coordinate ring of a plane with an embedded line inside three dimensional space over $k$ given by the equations $x^2=xy=0$ in the three variables $x,y,z$. Using \Thm{T:cohrk}, one easily calculates that $\len R=\omega^2+\omega$, where the associated primes are $\pr=(x)$ and $\mathfrak q=(x,y)$. The ideals of length $\omega$ are exactly those contained in $\pr$. 
The ideals of length $\omega^2+\omega$ (the \emph{open}  ideals in the terminology from the next section), are precisely those 
%
%
that contain a non-zero multiple of $x$ and a non-zero multiple of $y$ (this follows, for instance,  from \cite[Proposition 4.10]{SchBinEndo}). Finally, the remaining (non-zero) ideals of length $\omega^2$, are those contained in $\mathfrak q$ but disjoint from $\pr$ (note that if $I$ is not contained in $\mathfrak q$, then $IR_\pr=R_\pr$ and $IR_{\mathfrak q}=R_{\mathfrak q}$, so that $I$ must be open). 
\end{example}

\section{Open submodules}
By semi-additivity, $\len N$ is at most $\len M$, and, in fact, $\len N\aleq\len M$ by \Thm{T:submod}.  If $M$ has finite length and $N$ is a proper submodule, then obviously its length must be strictly less, but in the non-Artinian case, nothing excludes this from being an equality. So, we call   $N\sub M$ \emph{open}, if $\len N=\len M$. Immediately   from \Thm{T:cohrk} and \Thm{T:submod}, we get

\begin{corollary}\label{C:openval}
A submodule  $N\sub M$ is open \iff\ $\lc N\pr=\lc M\pr$ for all $\pr$, \iff\ $\fcyc RN=\fcyc RM$, \iff\ $\val N=\val M$. \qed
\end{corollary}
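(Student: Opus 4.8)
The plan is to read off all four equivalences from \Thm{T:cohrk} together with the elementary properties of the assignment $D\mapsto\binord D$ on effective cycles. The one fact that makes everything go is that, for \emph{any} submodule $N\sub M$, one has $\fcyc RN\aleq\fcyc RM$: applying the left-exact functor $\lc\cdot\pr$ to $0\to N\to M\to M/N\to 0$ realizes $\lc N\pr$ as a submodule of the finite-length $R_\pr$-module $\lc M\pr$, so that $\lcl\pr N\leq\lcl\pr M$ for every prime $\pr$ (this is \Rem{R:submod}, and it underlies \Thm{T:submod}). Consequently, for a fixed $\pr$, the equality $\lc N\pr=\lc M\pr$ of submodules of $M_\pr$ is equivalent to the numerical equality $\lcl\pr N=\lcl\pr M$; asking this for all $\pr$ is, by the very definition of the fundamental cycle, literally the same as $\fcyc RN=\fcyc RM$. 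This identifies the first three conditions with one another, once each is shown equivalent to openness.

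To bring in openness, recall $\len N=\binord{\fcyc RN}$ and $\len M=\binord{\fcyc RM}$ from \Thm{T:cohrk}. Since $\fcyc RN\aleq\fcyc RM$, we may write $\fcyc RM=\fcyc RN+D$ with $D$ an effective cycle, and then $\len M=\len N\ssum\binord D$ because $\binord\cdot$ carries cycle addition to the shuffle sum. The shuffle sum is coefficient-wise addition of Cantor normal forms, hence cancellative, so $\len N=\len M$ forces $\binord D=0$; but an effective cycle has vanishing image under $\binord\cdot$ only when it is the zero cycle, so $D=0$ and $\fcyc RN=\fcyc RM$. The converse implication $\fcyc RN=\fcyc RM\Rightarrow\len N=\len M$ is immediate. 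Thus $N$ is open \iff\ $\fcyc RN=\fcyc RM$, closing the loop with the first three conditions.

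Finally, for the valence: by \Cor{C:order} the valence of a module is the degree of its fundamental cycle, i.e. the sum of its coefficients, so $\val N=\val M$ says exactly that $\fcyc RN$ and $\fcyc RM$ have equal coefficient sums. Since $\fcyc RN\aleq\fcyc RM$ means the coefficients of $\fcyc RN$ are termwise at most those of $\fcyc RM$, equal sums force the two cycles to coincide; the reverse implication is trivial. There is no genuine obstacle here — the statement is pure bookkeeping on top of \Thm{T:cohrk} — and the only recurring point to keep straight is that in each of the three comparisons ($\lc N\pr$ versus $\lc M\pr$, $\len N$ versus $\len M$, $\val N$ versus $\val M$) the two quantities are already known to obey a one-sided inequality, so that equality can be promoted to an identity of cycles.
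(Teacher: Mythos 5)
Your proof is correct and follows the same route the paper intends: the corollary is indeed the immediate bookkeeping consequence of \Thm{T:cohrk} together with the one-sided cycle inequality $\fcyc RN\aleq\fcyc RM$ (the content of \Rem{R:submod}, which also underlies \Thm{T:submod} as cited in the paper's \qed). Your observation that each of the three comparisons already satisfies a one-sided inequality, so that passing through $\binord\cdot$ (which turns cycle addition into the cancellative shuffle sum and sends effective cycles to $0$ only when they vanish) promotes each numerical equality to an identity of fundamental cycles, is exactly the argument the paper leaves implicit.
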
 

\begin{remark}\label{R:loccoh}
In particular,   the long exact sequence of local cohomology yields that $N\sub M$ is open \iff\  the canonical morphism $\lc {M/N}\pr\to \op H_\pr^1(N)$ is injective, for every (associated) prime $\pr$ (of $M$).
\end{remark}

\begin{proposition}\label{P:big}
Given an exact sequence $\Exactseq NMQ$, if   $\dim(Q)<\order  {}M$, then $N$ is open.   In particular, any non-zero ideal in a domain is open, and more generally, any ideal in an  unmixed ring containing a parameter is open. 
 \end{proposition}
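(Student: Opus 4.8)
The plan is to verify openness through the local-cohomology criterion of \Cor{C:openval}: it is enough to show that $\lc N\pr=\lc M\pr$ for every prime $\pr$ of $R$. Left-exactness of the $\pr$-torsion functor on localizations makes the inclusion $N\into M$ yield $\lc N\pr\sub\lc M\pr$ for free, so the entire content is the reverse inclusion, which I would obtain by splitting into two cases according to how $\dim\pr$ compares with $o:=\order{}M$.

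If $\dim\pr<o$, then by \Cor{C:order} the prime $\pr$ is not associated to $M$, so $\lcl\pr M=0$ by \eqref{eq:lcl}; thus $\lc M\pr=0$ and equality holds trivially. If $\dim\pr\ge o>\dim Q$, then $\pr$ cannot belong to $\op{Supp}(Q)=V(\ann RQ)$, for $\pr\supseteq\ann RQ$ would force $\dim\pr\le\dim(R/\ann RQ)=\dim Q$; hence $Q_\pr=0$, so localizing $\Exactseq NMQ$ at $\pr$ gives $N_\pr\iso M_\pr$, and in particular $\lc N\pr=\lc M\pr$. Since all associated primes of $M$ have dimension at least $o$ by \Cor{C:order}, these two cases exhaust all primes, and $N$ is open. (One could instead combine both halves of semi-additivity with \Thm{T:submod} and compare Cantor coefficients, but the cohomological argument is shorter.)

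For the two ``in particular'' assertions, apply the above with $M=R$, $N=I$, $Q=R/I$. If $R$ is a domain of Krull dimension $d$, then $(0)$ is its unique associated prime, so $\order{}R=d$ by \Cor{C:order}; and a nonzero ideal $I$ lies only in primes of positive height, so the standard bound $\op{ht}(\pr)+\dim(R/\pr)\le d$ gives $\dim(R/I)\le d-1<\order{}R$, whence $I$ is open. If $R$ is unmixed of dimension $d$, then every associated, hence every minimal, prime of $R$ has dimension $d$, so again $\order{}R=d=\dim R$; a parameter $f$, lying outside every minimal prime, satisfies $\dim(R/(f))\le d-1$, and therefore any ideal $I\ni f$ has $\dim(R/I)\le\dim(R/(f))<\order{}R$ and is open.

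The only step that is not purely formal is the dimension bookkeeping in the last paragraph, namely that killing a nonzero ideal of a domain, or a parameter of an unmixed ring, strictly drops Krull dimension; that is where I would be most careful, while everything else is a direct assembly of \Cor{C:openval} and \Cor{C:order} with elementary facts about supports and left-exactness of local cohomology.
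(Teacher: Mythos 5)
Your proof is correct, and it takes a genuinely different route from the paper's. You verify openness via the cohomological criterion of \Cor{C:openval}, case-splitting on $\dim\pr$ against $o:=\order{}M$: primes of dimension $<o$ are not associated to $M$ (so local cohomology vanishes on both sides), while primes of dimension $\geq o$ lie outside $\op{Supp}(Q)$ (so localization kills $Q$ and $N_\pr\iso M_\pr$). The paper instead argues purely at the level of ordinal arithmetic: it applies the upper half of semi-additivity to get $\mu\leq\theta\ssum\nu$ in Cantor-coefficient form, notes that the hypothesis makes the supports of $\mu$ and $\theta$ disjoint, and concludes $\mu\aleq\nu$ directly, with no appeal to \Cor{C:openval}. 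Your route leans on \Thm{T:cohrk} and its corollaries, which makes the geometry transparent (low-dimensional primes are irrelevant, high-dimensional ones see $Q_\pr=0$), whereas the paper's route is shorter and independent of the cohomological description, requiring only semi-additivity together with the identification of degree and order with dimension and minimal associated-prime dimension. Your handling of the ``in particular'' assertions (via $\op{ht}(\pr)+\dim(R/\pr)\leq d$ and parameters avoiding minimal primes) matches the paper's, which treats only the unmixed case since it subsumes the domain case.
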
 
\begin{proof}
Let $\nu=\sum a_i\omega^i$, $\mu=\sum b_i\omega^i$, and $\theta=\sum c_i\omega^i$ be the respective lengths of $N$, $M$ and $Q$. By semi-additivity, $\mu\leq\theta\ssum\nu$, whence $b_i\leq a_i+c_i$, for all $i$. By assumption, $c_i=0$ whenever $b_i\neq 0$, so that in fact $b_i\leq a_i$, for all $i$, that is to say, $\mu\leq \nu$. Since the other inequality always holds,  $N$ is open.   To prove the last assertion, let $x$ be a parameter in a $d$-dimensional unmixed ring $R$, so that $R/xR$ has dimension strictly less than $d$. Since $\order{}R=d$ by \Thm{T:cohrk}, our first assertion shows that the ideal $(x)$, and hence any ideal containing $x$, is open.  
\end{proof} 

Let us call a submodule $N\sub M$ \emph{\add}, if $\Exactseq NM{M/N}$ is \add, that is to say, if $\len M=\len N\ssum\len {M/N}$. Hence a direct summand is \add\ by semi-additivity. By \Cor{C:orddim}, any submodule   $N$ such that  $\dim N\leq \order {}{M/N}$, is \add, but the converse need not hold. 
 
\begin{proposition}\label{P:osplitopen}
A maximal (proper) submodule is either \add\ or open. In particular, if $M$ has positive order, then any maximal submodule is open.
\end{proposition}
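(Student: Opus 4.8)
The plan is to use that a maximal proper submodule $N\subset M$ has a simple — hence zero‑dimensional, length‑one — quotient, and then to wedge $\len M$ between the two semi‑additivity bounds, which for such a thin quotient nearly pin it down. First I would record that $M/N$ is a simple $R$‑module, so it has ordinary length $1$, and therefore $\len{M/N}=1$ because ordinal length coincides with classical length on finite‑length modules. Applying the Semi‑additivity Theorem to $\Exactseq NM{M/N}$ then gives
$$
 1+\len N\ \le\ \len M\ \le\ \len N\ssum 1 .
$$

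Next I would split on whether $\len N$ is finite. If $\len N=n<\omega$, both outer terms equal $n+1$, so $\len M=n+1=\len N\ssum\len{M/N}$ and $N$ is \add\ — this is the Artinian case, where $N$ is patently not open (and, incidentally, $\dim N=0$, so the sequence is even strongly \add\ by \Cor{C:orddim}). If instead $\len N\ge\omega$, then $1+\len N=\len N$, whereas $\len N\ssum 1$ equals the successor $\len N+1$, since shuffle‑adding the natural number $1$ merely increments the constant Cantor coefficient, which is the same as the ordinary successor. Hence $\len N\le\len M\le\len N+1$, so either $\len M=\len N$, in which case $N$ is open, or $\len M=\len N\ssum 1=\len N\ssum\len{M/N}$, in which case $N$ is \add. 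This establishes the dichotomy.

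For the supplementary statement, assume $\order{}M>0$; then the constant Cantor coefficient of $\len M$ is $0$, so $\len M$ has nonzero order and is therefore not a successor ordinal. But $\len N\ssum\len{M/N}=\len N\ssum 1=\len N+1$ is always a successor, so the sequence cannot be \add; by the dichotomy just proved, $N$ must be open. There is no serious obstacle here: the only care needed is the bookkeeping at the lowest Cantor coefficient — keeping $1+\len N$ and $\len N\ssum 1$ distinct — together with not overlooking the finite‑length case, where "open" genuinely fails and it is the additivity alternative that holds.
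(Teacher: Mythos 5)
Your proof is correct and follows essentially the same route as the paper: bound $\len M$ between $\len N$ and $\len N\ssum 1=\len N+1$ via semi-additivity, observe these are consecutive ordinals so the dichotomy is forced, and then note that positive order makes $\len M$ a limit ordinal, ruling out the successor $\len N+1$. The finite/infinite case split you perform on $\len N$ is a harmless elaboration (and your aside that the finite case is even strongly \add\ is a nice touch), but the paper gets by without it since $\nu\leq\mu\leq\nu+1$ already pins $\mu$ to one of two consecutive ordinals regardless.
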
 
\begin{proof}
Let $N\varsubsetneq M$ be maximal, so that $Q:=M/N$ is simple, of length one. Let $\nu$ and $\mu$ be the respective lengths of $N$ and $M$. By semi-additivity, we have $\nu\leq\mu\leq\nu+1$. If the former inequality holds, the submodule is open, and if the latter holds, it is \add. The last assertion now follows from \Thm{T:cohrk}, for if $M$ has positive order, its length is a limit ordinal, and so, by \eqref{eq:quot}, no module of finite co-length can be \add.
\end{proof} 

In the ring case, we can even prove:

\begin{proposition}\label{P:lenmax}
If $(R,\maxim)$ is a non-Artinian  local ring, then $\maxim$ is open.
\end{proposition}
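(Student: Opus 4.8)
The goal is to show that if $(R,\maxim)$ is a non-Artinian local ring, then $\len\maxim=\len R$, i.e. $\maxim$ is open. By \Cor{C:openval} it suffices to show that $\maxim$ and $R$ have the same fundamental cycle, equivalently that $\lcl\pr\maxim=\lcl\pr R$ for every prime $\pr$. Equivalently, by \Rem{R:loccoh} applied to the exact sequence $\Exactseq\maxim R{R/\maxim}$, it suffices to show that the connecting map $\lc{R/\maxim}\pr\to\op H^1_\pr(\maxim)$ is injective for every $\pr$. Since $R/\maxim$ is the residue field, $\lc{R/\maxim}\pr$ is zero unless $\pr=\maxim$ and $\dim(\maxim)=0$; but $R$ is non-Artinian, so $\dim R>0$ and $\maxim$ is not a minimal prime, hence not of dimension $0$. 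Wait — more carefully: $\lc{R/\maxim}\pr=(R/\maxim)_\pr$, which is nonzero precisely when $\maxim\sub\pr$, i.e. $\pr=\maxim$. So the only potential obstruction sits at $\pr=\maxim$.

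Thus the plan reduces to showing $\lcl\maxim\maxim=\lcl\maxim R$, i.e. that $\maxim$ and $R$ have the same $\maxim$-power-torsion. The $\maxim$-torsion of $R$ is its largest submodule of finite length, call it $H$; since $R$ is non-Artinian, $H\neq R$, so $H$ is a proper ideal and hence $H\sub\maxim$. Therefore the $\maxim$-torsion of $\maxim$ contains $H$. Conversely the $\maxim$-torsion of $\maxim$ is contained in the $\maxim$-torsion of $R$, which is $H$, by the left-exactness of local cohomology (inclusion \eqref{eq:ass1} and the first part of \Thm{T:submod}). Hence the two $\maxim$-torsion submodules coincide, so $\lcl\maxim\maxim=\lcl\maxim R$. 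Combined with the reduction above — for every $\pr\neq\maxim$ the surjection $R\onto R/\maxim$ is an isomorphism after localizing at $\pr$ since $(R/\maxim)_\pr=0$, so $\maxim_\pr=R_\pr$ and the local multiplicities agree trivially — we get $\fcyc R\maxim=\fcyc RR$, and \Cor{C:openval} finishes the proof.

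The main thing to get right is simply the observation that non-Artinian means $\dim R\geq 1$, so $\maxim$ strictly contains the finite-length part $H$ of $R$, which forces the torsion submodules of $\maxim$ and of $R$ to be literally equal rather than merely of equal length. Everything else is a direct appeal to \Cor{C:openval} together with the elementary fact that localizing at a prime $\pr\neq\maxim$ kills $R/\maxim$. There is no real obstacle; the argument is short once one phrases openness via the fundamental cycle.
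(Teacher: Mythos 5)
Your proof is correct, but it takes a genuinely different and in fact more direct route than the paper's. The paper argues via \Prop{P:osplitopen} (a maximal submodule is either additive or open) and then, after reducing to the case that $\maxim$ is an associated prime, picks a socle element $x$ with $\ann{}x=\maxim$, applies \Cor{C:orddim} to the sequences $\Exactseq{xR}R{\bar R}$ and $\Exactseq{xR}\maxim{\maxim\bar R}$ to drop the length by one, and finishes by induction on $\len R$. You instead go straight to the fundamental-cycle characterization of openness (\Cor{C:openval}): for $\pr\neq\maxim$ the localization $(R/\maxim)_\pr$ vanishes so $\maxim_\pr=R_\pr$, and at $\pr=\maxim$ the finite-length part $H=\lc R\maxim$ is a proper ideal (since $R$ is non-Artinian) hence sits inside $\maxim$, giving the sandwich $H\sub\lc\maxim\maxim\sub\lc R\maxim=H$. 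This avoids both the case split on whether $\maxim$ is associated and the induction; once \Thm{T:cohrk} and \Cor{C:openval} are in hand, your argument is essentially immediate and arguably cleaner. The brief detour through \Rem{R:loccoh} in your write-up is a red herring that you correctly abandon; the actual content is the torsion-submodule computation, which is sound.
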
 
\begin{proof}
  Let $\nu$ and $\mu$ be the respective lengths of $\maxim$ and $R$. In view of \Prop{P:osplitopen},  to rule out  that $\maxim$ is \add, we may assume that it is an associated prime. 
%
%
  Choose $x\in R$ with $\ann{}x=\maxim$ and put $\bar R:=R/xR$. Since $xR$ has length one,  \Cor{C:orddim} applied to the exact sequences  $\Exactseq{xR}R{\bar R}$ and $\Exactseq{xR}\maxim{\maxim\bar R}$ yields $\mu=\len{\bar R}+1$ and $\nu=\len{\maxim\bar R}+1$. By induction, $\len{\maxim\bar R}=\len{\bar R}$, and hence $\nu=\mu$. 
\end{proof} 

\begin{remark}\label{R:lenprim}
As for primary ideals $\mathfrak n$, they will not be open in general if $R$ has depth zero. More precisely, suppose  $\len R=\lambda+n$ with $\lambda$ a limit ordinal and $n\in\nat$. If $\mathfrak n\lc R\maxim  =0$ (which will be the case if $\len{R/\mathfrak n}\geq n$), then $\len {\mathfrak n}=\lambda$. Indeed, the case $n=0$ is trivial, and we may always reduce to this since $\mathfrak n$ is   a module over $R/\lc \maxim R$, and the latter has length $\lambda$ by \Prop{P:big}. 
\end{remark}

\begin{corollary}\label{C:openemb}
If $N\sub M$ is open, then  $\op{Supp}(M/N)$ is nowhere dense in $\op{Supp}(M)$.  The converse   holds if $M$ has no embedded primes. 
\end{corollary}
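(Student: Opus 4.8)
The plan is to funnel both conditions through a single statement: $N_\pr=M_\pr$ for every minimal prime $\pr$ of $\op{Supp}(M)$.

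First I would dispose of the topology. Since $M/N$ is finitely generated, $\op{Supp}(M/N)$ is a closed subset of $\op{Supp}(M)$, and the irreducible components of $\op{Supp}(M)$ are the $V(\pr)$ with $\pr$ ranging over the minimal primes of $M$. I would then use the elementary fact that, in a Noetherian space $X$, a closed set $Z$ is nowhere dense \iff\ it contains no irreducible component of $X$ --- one direction because a proper closed subset of an irreducible space is nowhere dense and nowhere-density is preserved under finite unions, the other because a component $X_i$ contains the non-empty open set $X_i\setminus\bigcup_{j\neq i}X_j$. Applied here, this says $\op{Supp}(M/N)$ is nowhere dense in $\op{Supp}(M)$ \iff\ it contains no minimal prime of $M$, i.e.\ \iff\ $(M/N)_\pr=0$, equivalently $N_\pr=M_\pr$, for every minimal prime $\pr$ of $M$.

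Next I would record that for a minimal prime $\pr$ of $M$ the module $M_\pr$ has finite length over $R_\pr$, hence is annihilated by a power of $\pr R_\pr$; therefore $\lc M\pr=M_\pr$ and $\lc N\pr=N_\pr$. The first assertion is then immediate from \Cor{C:openval}: if $N$ is open, then $\lc N\pr=\lc M\pr$ for all $\pr$, so in particular $N_\pr=M_\pr$ for every minimal prime $\pr$ of $M$, and the previous paragraph gives the nowhere-density. This half uses no hypothesis on embedded primes.

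For the converse, assume $M$ has no embedded primes --- so $\op{Ass}(M)$ is exactly the set of minimal primes of $M$ --- and assume $\op{Supp}(M/N)$ is nowhere dense. By the first step, $N_\pr=M_\pr$, hence $\lc N\pr=\lc M\pr$, for every $\pr\in\op{Ass}(M)$; and for $\pr\notin\op{Ass}(M)$ we get $\pr\notin\op{Ass}(N)$ from \eqref{eq:ass1}, so both local cohomologies vanish. Thus $\lc N\pr=\lc M\pr$ for all $\pr$, and \Cor{C:openval} shows $N$ is open. I expect the only real point of care to be the topological lemma above together with the identification of the components of $\op{Supp}(M)$ with the minimal primes of $M$; the algebra is a one-line application of \Cor{C:openval}. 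It is also worth remarking why the embeddedness hypothesis cannot be dropped: in \Examp{E:nulen}, an ideal $I$ contained in $\mathfrak q=(x,y)$ but not in $\pr=(x)$ has $\op{Supp}(R/I)$ nowhere dense in $\op{Supp}(R)=V(\pr)$, yet $\len I=\omega^2<\omega^2+\omega=\len R$, so $I$ is not open.
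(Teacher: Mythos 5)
Your proof is correct, and the forward direction matches the paper's exactly: translate nowhere-density into ``$\op{Supp}(M/N)$ misses every minimal prime of $M$,'' observe that at a minimal prime $\pr$ the localization $M_\pr$ has finite length so $\lc M\pr=M_\pr$ and $\lc N\pr=N_\pr$, then apply \Cor{C:openval}. For the converse, however, you take a slightly different (and arguably cleaner) route than the paper. The paper invokes \Cor{C:semiaddcyc} to get the cycle identity $\fcyc{}M+D=\fcyc{}N+\fcyc{}Q$ with $D$ effective and supported off $\op{Ass}(M)$, and then argues that $\fcyc{}M-\fcyc{}N=\fcyc{}Q-D$ is simultaneously supported inside $\op{Ass}(M)$ and disjoint from it, hence zero. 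You instead verify $\lc N\pr=\lc M\pr$ prime by prime via a two-case split --- at minimal primes of $M$ it follows from nowhere-density as in the forward direction, and for $\pr\notin\op{Ass}(M)$ both sides vanish because $\op{Ass}(N)\sub\op{Ass}(M)$ by \eqref{eq:ass1} --- with the no-embedded-primes hypothesis exactly what makes this dichotomy exhaustive. Both arguments rest on \Cor{C:openval} and \eqref{eq:ass1}; yours avoids the extra layer of \Cor{C:semiaddcyc} (and hence \Lem{L:loccohgen}), at the cost of nothing, so it is the more self-contained of the two. Your closing remark about \Examp{E:nulen} correctly illustrates why the embedded-primes hypothesis cannot be dropped.
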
 
\begin{proof}
Let $Q:=M/N$. The condition on the supports means that no minimal prime $\pr$ of $M$  lies in the support of $Q$. However, for such $\pr$, we have $M_\pr=\lc M\pr$.   Hence $N_\pr=M_\pr$ by \Cor{C:openval},  showing that $Q_\pr=0$. Suppose next that $M$ has no embedded primes and $\op{Supp}(Q)$ is nowhere dense in $\op{Supp}(M)$. Let $D:=\fcyc {}N+\fcyc{}Q-\fcyc {}M$ as given by   \Cor{C:semiaddcyc}, and let $\pr$ be an associated prime of $Q$. Since $\op{Supp}(Q)$ is nowhere dense, $\pr$ is not an associated prime of $M$. Since the cycle $\fcyc {}M-\fcyc {}N$ has   support in $\op{Ass}(M)$ and is equal to $ \fcyc {}Q-D$, it must be the zero cycle, since $\fcyc {}Q$ and $D$ have support disjoint from $\op{Ass}(M)$. Hence $N\sub M$ is open by \Cor{C:openval}.
\end{proof} 

Together with \Prop{P:big}, this proves:

\begin{corollary}\label{C:nillow}
If $N\sub M$ is open then   $\op{dim}(M/N)<\op{dim}(M)$, and the converse holds if $M$ is  unmixed.\qed
\end{corollary}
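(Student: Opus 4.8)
The statement to prove is \Cor{C:nillow}: if $N\sub M$ is open then $\dim(M/N)<\dim(M)$, and the converse holds if $M$ is unmixed.

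\medskip

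The plan is to read off both implications from \Cor{C:openemb} together with \Prop{P:big}, since the hint ``Together with \Prop{P:big}, this proves:'' makes explicit that no new ideas are needed. First I would prove the forward direction. Suppose $N\sub M$ is open. By \Cor{C:openemb}, $\op{Supp}(M/N)$ is nowhere dense in $\op{Supp}(M)$. I would then observe that a closed subset which is nowhere dense in $\op{Supp}(M)$ cannot contain any minimal prime of $M$, hence omits at least one minimal prime of maximal dimension $\dim(M)$; therefore every prime in $\op{Supp}(M/N)$ has dimension strictly less than $\dim(M)$, which says exactly $\dim(M/N)<\dim(M)$. (The degenerate case $M=0$, forcing $M/N=0$, is handled by the usual convention that the empty scheme has dimension $-\infty$ or by simply excluding it.)

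\medskip

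For the converse, assume $M$ is unmixed of dimension $d$ and that $Q:=M/N$ satisfies $\dim Q<d$. I would invoke \Prop{P:big}: since $M$ is unmixed, $\order{}M=\dim M=d$ by the discussion following \Cor{C:semadd} (or directly from \Thm{T:cohrk}, as all associated primes of $M$ have dimension $d$), and the hypothesis $\dim Q<d=\order{}M$ is precisely the hypothesis of \Prop{P:big} applied to the exact sequence $\Exactseq NMQ$. That proposition then yields that $N$ is open, completing the proof.

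\medskip

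The only point requiring a small argument is the topological claim in the forward direction — that ``nowhere dense in $\op{Supp}(M)$'' forces the dimension to drop — but this is routine: the components of $\op{Supp}(M)$ of top dimension $d$ correspond to the minimal primes of $M$ of dimension $d$, and a nowhere dense closed set meets none of them, so it is contained in the union of the components of dimension $<d$ together with proper closed subsets of the top-dimensional components, all of which have dimension $<d$. I do not anticipate any genuine obstacle; the corollary is essentially a repackaging of \Cor{C:openemb} and \Prop{P:big}.

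\begin{proof}
Suppose first that $N\sub M$ is open. By \Cor{C:openemb}, the support of $Q:=M/N$ is nowhere dense in $\op{Supp}(M)$. The irreducible components of $\op{Supp}(M)$ of maximal dimension $d:=\dim(M)$ correspond to the minimal primes of $M$ of dimension $d$. A closed subset that is nowhere dense in $\op{Supp}(M)$ contains no irreducible component of $\op{Supp}(M)$, and in particular no minimal prime; hence every prime in $\op{Supp}(Q)$ either fails to be minimal in $M$ or lies properly inside a component of dimension $d$, and in either case has dimension $<d$. (If $M=0$ the statement is vacuous.) Therefore $\dim(Q)<\dim(M)$.

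Conversely, assume $M$ is unmixed of dimension $d$ and $\dim(M/N)<d$. Since every associated prime of $M$ has dimension $d$, \Thm{T:cohrk} gives that $\len M$ has order $d$, i.e.\ $\order{}M=d$. Thus the hypothesis $\dim(M/N)<d=\order{}M$ is exactly the hypothesis of \Prop{P:big} for the exact sequence $\Exactseq NM{M/N}$, which yields that $N$ is open.
\end{proof}
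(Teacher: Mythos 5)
Your proof is correct and follows exactly the route the paper signals: \Cor{C:openemb} for the forward implication (with the routine observation that a nowhere dense closed subset of a Noetherian space has strictly smaller dimension) and \Prop{P:big} for the converse, using that unmixedness forces $\order{}M=\dim M$. This matches the paper's intended argument.
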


In \cite{SchBinEndo}, we called a module $M$ \emph\topc, if $M$ embeds (as a submodule) in every open submodule. The following generalizes \cite[Corollary 6.12]{SchBinEndo}:

\begin{corollary}\label{C:topc}
If $M$ has no embedded prime ideals, then it is \topc.
\end{corollary}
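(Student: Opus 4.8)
The plan is to unwind the definition of \topc\ directly: I must show that every open submodule $N\sub M$ contains an isomorphic copy of $M$ as a submodule. The natural candidate for such an embedding is multiplication by a ring element, so the whole argument reduces to producing a single $s\in R$ that is simultaneously a non-zerodivisor on $M$ — so that $\mu_s\colon M\to M$, $x\mapsto sx$, is injective — and that lies in the conductor $I:=(N:_R M)=\ann R{M/N}$ — so that $sM\sub N$. Granted such an $s$, the map $\mu_s$ realizes $M\iso sM\sub N$, and we are done.

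To construct $s$, I would first apply \Cor{C:openemb}: openness of $N$ in $M$ forces $\op{Supp}(M/N)$ to be nowhere dense in $\op{Supp}(M)$. Since $\op{Supp}(M/N)=V(I)$ is closed, being nowhere dense means it contains no irreducible component of $\op{Supp}(M)$, that is, no minimal prime of $M$. This is exactly where the hypothesis enters: because $M$ has no embedded primes, $\op{Ass}(M)$ is precisely the set of minimal primes of $\op{Supp}(M)$, so $I$ is contained in no associated prime of $M$. As $\op{Ass}(M)$ is finite, prime avoidance then yields some $s\in I$ lying outside $\bigcup_{\pr\in\op{Ass}(M)}\pr$; but this union is exactly the set of zerodivisors on $M$, so $s$ has both required properties.

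The only step that needs real care is the passage from ``$\op{Supp}(M/N)$ nowhere dense in $\op{Supp}(M)$'' to ``$I\not\sub\pr$ for every $\pr\in\op{Ass}(M)$'', which uses the no-embedded-primes assumption to move from components of the support to associated primes; the remaining ingredients (zerodivisors on $M$ form the union of its associated primes, prime avoidance for a finite family, injectivity of multiplication by a non-zerodivisor) are entirely standard. I do not expect a further obstacle, but the hypothesis must be invoked at exactly this point, since for a module with embedded primes an open submodule need not contain a copy of the whole module.
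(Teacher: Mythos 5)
Your proposal is correct and follows essentially the same route as the paper's proof: invoke \Cor{C:openemb} to conclude that no minimal (equivalently, by the no-embedded-primes hypothesis, no associated) prime of $M$ lies in $\op{Supp}(M/N)$, then use prime avoidance to pick $s\in\ann{}{M/N}$ avoiding all associated primes of $M$, so that multiplication by $s$ embeds $M$ isomorphically onto $sM\sub N$. You merely spell out a few routine steps (prime avoidance, non-zerodivisor $\Leftrightarrow$ outside the union of associated primes) that the paper leaves implicit.
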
 
\begin{proof}
Let $N\sub M$ be open and put $Q:=M/N$. By \Cor{C:openemb}, no minimal (whence associated) prime of $M$ lies in the support of $Q$. Hence there exists $x\in \ann{}Q$ outside any associated prime of $M$. In particular, $xM\sub N$ and multiplication by $x$ gives an isomorphism $M\iso xM$.
\end{proof} 

\section{The canonical topology}

As the name indicates, there is an underlying topology. To prove this, we need:

\begin{theorem}\label{T:cantop}
The inverse image of an open submodule under a morphism is again open. In particular, if $U\sub  M$ is open, and $N\sub M$ is arbitrary, then $N\cap U$ is open in $N$.
\end{theorem}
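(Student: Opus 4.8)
The plan is to bypass the ordinal machinery entirely and work with the cohomological description of openness: by \Cor{C:openval}, a submodule $V\sub W$ is open exactly when $\lc V\pr=\lc W\pr$ for every prime $\pr$, as submodules of $W_\pr$. Since this condition is detected prime by prime and $\lc{\cdot}\pr$ depends only on the localization at $\pr$, I would check it after localizing, where the statement collapses to the observation that a module homomorphism carries $\pr$-power torsion to $\pr$-power torsion. The second assertion is then just the case of the inclusion $N\into M$, for which $f^{-1}(U)=N\cap U$.

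In detail: let $f\colon N\to M$ be a morphism and $U\sub M$ open; I want $\lc{f^{-1}(U)}\pr=\lc N\pr$ for each prime $\pr$. First I would record that, localization being exact and $f^{-1}(U)=\ker(N\to M/U)$, one has $(f^{-1}(U))_\pr=f_\pr^{-1}(U_\pr)$ as a submodule of $N_\pr$, where $f_\pr\colon N_\pr\to M_\pr$ is the localized map. Because $\lc{f^{-1}(U)}\pr$ is the $\pr$-torsion of this submodule of $N_\pr$, while $\lc N\pr$ is the $\pr$-torsion of $N_\pr$, and the former is obviously contained in the latter, the claim reduces to the reverse inclusion $\lc N\pr\sub(f^{-1}(U))_\pr$. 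To get it, pick $x\in\lc N\pr$ with $\pr^nx=0$ in $N_\pr$; then $\pr^nf_\pr(x)=0$, so $f_\pr(x)\in\lc M\pr$. Now I would use openness of $U$ a second time: $\lc U\pr=\lc M\pr$ by \Cor{C:openval}, and since $\lc U\pr$ is nothing but $U_\pr\cap\lc M\pr$, this equality forces $\lc M\pr\sub U_\pr$. Hence $f_\pr(x)\in U_\pr$, i.e. $x\in f_\pr^{-1}(U_\pr)=(f^{-1}(U))_\pr$, which is what was needed. Taking $f$ to be the inclusion of an arbitrary submodule $N\sub M$ then yields that $N\cap U$ is open in $N$.

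I do not anticipate a serious obstacle, but the one thing that must be respected is that one cannot run this argument purely at the level of lengths: \Thm{T:submod} only gives $\len{f^{-1}(U)}\preceq\len N$, and there is no ordinal-theoretic reason forcing equality. The real point is thus the passage to the prime-local, local-cohomology formulation of openness from \Cor{C:openval}; once there, the proof is the elementary bookkeeping with torsion elements described above, the key input being that openness of $U$ says precisely that the $\pr$-power torsion of $M_\pr$ already sits inside $U_\pr$ for every $\pr$.
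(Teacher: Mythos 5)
Your proof is correct, and it takes a genuinely different route from the paper's. The paper first handles the intersection statement by applying semi-additivity to the Mayer--Vietoris sequence $\Exactseq{N\cap U}{N\oplus U}{N+U}$ (using that $N+U$ is open because $U\sub N+U\sub M$, and cancelling $\mu$ from $\nu\ssum\mu\leq\mu\ssum\alpha$), and then deduces the preimage statement via the graph $G\sub M\oplus N$ of $f$ and the coordinate projection $G\iso M$. You instead go straight through \Cor{C:openval}: openness of $U$ is equivalent to $\lc M\pr\sub U_\pr$ for every $\pr$ (since $\lc U\pr=U_\pr\cap\lc M\pr$), and then $f_\pr(\lc N\pr)\sub\lc M\pr\sub U_\pr$ gives $\lc N\pr\sub f_\pr^{-1}(U_\pr)=(f^{-1}(U))_\pr$, forcing $\lc{f^{-1}(U)}\pr=\lc N\pr$. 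Your argument proves the general preimage statement directly and gets the intersection case for free as a specialization, whereas the paper does it in the opposite order. The trade-off: the paper's proof needs only semi-additivity (it never invokes \Thm{T:cohrk} or \Cor{C:openval}), while yours leans on the full strength of \Thm{T:cohrk}, but in exchange the local-cohomology calculation is essentially trivial bookkeeping and arguably more transparent about \emph{why} openness is preserved. Both are valid; yours also correctly flags that one cannot argue purely at the level of lengths, since $\len{f^{-1}(U)}\preceq\len N$ alone does not force equality.
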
 
\begin{proof}
We start with proving the second assertion. Let $\mu:=\len M$, $\nu:=\len N$ and $\alpha:=\len{N\cap U}$. We have an exact sequence
$$
\Exactseq{N\cap U}{N\oplus U}{N+U}.
$$
Since $N+U$ is again open, semi-additivity yields $\len{N\oplus U}\leq \alpha\ssum\mu$. Since the former is equal to $\nu\ssum\mu$, we get $\nu\leq \alpha$, showing that $N\cap U$ is open in $N$. To prove the first assertion, let $f\colon M\to N$ be an arbitrary \homo\ and let $V\sub N$ be an open submodule. Let $G\sub M\oplus N$ be the graph of $f$ and let $p\colon G\to M$ be the projection onto the first coordinate. Since $M\oplus V$ is open in $M\oplus N$ by semi-additivity, $(M\oplus V)\cap G$ is open in $G$, by what we just proved. Since $p$ is an isomorphism, the image of $(M\oplus V)\cap G$ under $p$ is therefore open  in $M$. But this image is just $\inverse fV$, and so we are done.
\end{proof}

We can now define a topology on $M$ by letting the collection of open submodules be a basis of open neighborhoods of $0\in M$. This is indeed a basis since the intersection of finitely many opens is again open by \Thm{T:cantop}.  An arbitrary open in this topology is then a  (possibly infinite) union of cosets $x+U$ with $U\sub M$ open and $x\in M$. If a submodule $N$ is a union of cosets $x_i+U_i$ of open submodules $U_i\sub M$, then one such coset,  $x+U$ say, must contain $0$, so that $U\sub N$, and hence the inequalities $\len U\leq\len N\leq \len M$ are all equalities, showing that $N$ is indeed open in the previous sense. 

We call this the \emph{canonical topology} on $M$, and \Thm{T:cantop} shows that any \homo\ is continuous in the canonical topology. Moreover,  multiplication on any ring is continuous: given $a_1,a_2\in R$ and an open ideal $I$ such that $a_1a_2\in I$, let $J_i:=a_iR+ I$. Hence $J_i$ is an open neighborhood of $a_i$ and $J_1\cdot J_2\sub a_1a_2R+I=I$. If $M$ has dimension zero, then the canonical topology is trivial, since $M$ is then the only open submodule. 
The complement of an open module $N$ is the union of all cosets $a+N$ with $a\notin N$, and hence is also open.  In particular, an open module is also closed, and the quotient topology on $M/N$ is discrete, whence in general  different from the canonical topology. The zero module is closed \iff\ the intersection of all open submodules is $0$, that is to say, \iff\ the canonical topology is Haussdorf. 
%
%
\begin{corollary}\label{C:nonArt}
A module is non-Artinian \iff\ its canonical topology is non-trivial.
\end{corollary}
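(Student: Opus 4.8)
The plan is to unwind both sides of the equivalence to the same statement, namely the existence of a proper open submodule. First I would recall that, by the discussion immediately preceding this corollary, the canonical topology is \emph{trivial} precisely when the only open submodule of $M$ is $M$ itself; equivalently, the topology is non-trivial exactly when $M$ admits a \emph{proper} open submodule $N\varsubsetneq M$ with $\len N=\len M$. So the corollary is equivalent to the assertion: $M$ is non-Artinian $\iff$ there exists a proper submodule of $M$ of the same length as $M$.

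For the easy direction, suppose $M$ is Artinian. Then $M$ has finite length, so any proper submodule $N\varsubsetneq M$ satisfies $\len N<\len M$ by the Jordan--H\"older theory (recalled in \S\ref{s:Ord}), hence there is no proper open submodule and the topology is trivial. Conversely, I would argue the contrapositive of the other implication: if the topology is trivial, then $M$ is Artinian. Equivalently, assuming $M$ is non-Artinian, I must produce a proper open submodule. By Noetherianity a non-Artinian finitely generated module has positive dimension, so $\dim M=d\geq 1$; by \Thm{T:dim} the degree of $\len M$ is $d\geq 1$, so $\len M$ is a limit ordinal of positive degree, and in particular $\op{Ass}(M)$ contains a prime $\pr$ with $\op{dim}(\pr)=d\geq 1$.

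The substantive step is exhibiting the proper open submodule. Here I would invoke \Thm{T:submod}: writing $\len M=\sum a_i\omega^i$ with $a_d\ge 1$, the ordinal $\nu:=\len M$ is weaker than itself, but I instead want a \emph{proper} submodule realizing $\len M$. The cleanest route is \Cor{C:openval}: it suffices to find a proper submodule $N$ with $\fcyc RN=\fcyc RM$, i.e.\ with $\lc N\pr=\lc M\pr$ for every $\pr$. Choosing a parameter or a suitable element, one takes $N$ to be, say, the submodule generated by $\maxim$-many generators, or more concretely: since $\dim M\geq 1$, pick any associated prime $\pr$ of positive dimension and any element $x\in R$ not in $\pr$ nor in any minimal prime of $M$; then $xM\subsetneq M$ is proper (as $M/xM\ne 0$ by Nakayama on the relevant localizations, since $x$ lies in no minimal prime forces $\dim(M/xM)<\dim M$ only when... ) — more safely, apply \Prop{P:big}: if $M$ has positive order choose a maximal submodule, which is open by \Prop{P:osplitopen}; if $M$ has order zero, pass to $M/\lc M\maxim$-type reductions. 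The main obstacle I anticipate is precisely this case division on the order of $M$: when $\order{}M=0$ the length of $M$ still has positive \emph{degree} (being non-Artinian), so \eqref{eq:dimfil} and \Prop{P:dimfil} give $\fl D_{d-1}(M)\subsetneq M$ with $\len{\fl D_{d-1}(M)}$ obtained from $\len M$ by deleting degree-$d$ terms — that is \emph{not} open. Instead, the robust argument is: by \Cor{C:topc} ideas applied after killing the finite-length part, or directly, take $\pr$ a $d$-dimensional associated prime, $m\in M$ with $\ann Rm=\pr$, and by \Thm{T:submod} enlarge the cyclic submodule $Rm$ (of length $\omega^d$, hence $\preceq\len M$) successively up to a submodule of length exactly $\len M$; that submodule is open, and it is proper because $\len{Rm}=\omega^d<\len M$ would fail only if $M$ were itself cyclic of this type, in which case one instead uses that $M$ non-Artinian still admits the proper open submodule $\maxim\cdot M$ via \Prop{P:lenmax} applied to $R/\ann RM$. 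Assembling these cases yields a proper open submodule in every non-Artinian $M$, completing the equivalence.
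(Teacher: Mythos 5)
Your general strategy --- reduce both sides to ``there exists a proper open submodule'' and then construct one --- is sound, and the easy direction (Artinian $\Rightarrow$ trivial) is fine. But the converse, as you have written it, has real gaps, and the line you finally commit to does not go through.

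First, a small but genuine error: from $\dim M = d\geq 1$ and \Thm{T:dim} you conclude ``$\len M$ is a limit ordinal of positive degree.'' Positive \emph{degree} does not imply limit ordinal --- $\omega+1$ has degree $1$ and is a successor (and indeed the paper exhibits a ring of length $\omega+1$). What you can conclude is that there is an associated prime of positive dimension, nothing more.

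Second, and more seriously, the ``robust argument'' you settle on --- take $m$ with $\ann Rm=\pr$, and use \Thm{T:submod} to enlarge $Rm$ to a submodule of length exactly $\len M$ --- does \emph{not} produce a proper submodule. \Thm{T:submod} only guarantees the existence of a submodule of the prescribed length; when that length is $\len M$ itself, $M$ is a perfectly legal output of the construction, and nothing in the statement or its proof lets you insist on a proper one. Your attempted justification (``it is proper because $\len{Rm}<\len M$ would fail only if $M$ were itself cyclic'') conflates the starting cyclic module with the enlarged module; the intermediate enlargements in the proof of \Thm{T:submod} can reach all of $M$ even when $Rm\varsubsetneq M$. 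The fallback via \Prop{P:lenmax} applied to $R/\ann RM$ also only covers the cyclic, local case, which you have not reduced to.

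The route you sketched and then abandoned is actually the one that works and is a legitimate alternative to the paper's argument: if $\order{}M>0$, any maximal submodule is proper and open by \Prop{P:osplitopen}; if $\order{}M=0$, let $H:=\fl D_0(M)$ and pass to $\bar M:=M/H$, which has positive order and positive dimension (hence is non-Artinian), take a maximal submodule $\bar N\varsubsetneq\bar M$ and pull it back to $N\varsubsetneq M$ containing $H$. Since $\fl D_0(N)=H$, \Prop{P:dimfil} gives strong additivity for both $\Exactseq{H}{M}{\bar M}$ and $\Exactseq{H}{N}{\bar N}$, and $\len{\bar N}=\len{\bar M}$ then forces $\len N=\len M$, so $N$ is a proper open submodule. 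This case split on the order of $M$ differs from the paper's proof, which instead runs a transfinite induction on $\len M$, splitting on whether $\len M$ is a limit or a successor ordinal: in the limit case a submodule of height rank $1$ is open by \Prop{P:big}, and in the successor case $\mu=\nu+1$ one finds $H$ with $\hd H=\nu$, applies the induction hypothesis to $M/H$, and lifts. Either route is fine; but as written your proposal never completes one of them, and the argument you do complete is broken at the step cited above.
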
 
\begin{proof}
One direction is immediate since an Artinian module has no proper open submodules. For the converse, we show, by induction on  $\len M$, that $M$ has a non-trivial, open submodule.   Assume first that $\mu$ is a limit ordinal. Choose a  submodule $N$ of $M$ of \hdim\ $1$. By \eqref{eq:quot}, this means $\len{M/N}=1$, and so $N$ is open by \Prop{P:big}. 
%
%
Next, assume $\mu=\nu+1$. Let $H$ be a submodule of \hdim\ $\nu$, so that by \eqref{eq:quot} again, $\bar M:=M/H$ has length $\nu$. By induction, we can find a proper open submodule of $\bar M$, that is to say, we can find $N\varsubsetneq M$ containing $H$ such that $\len{N/H}=\nu$. Semi-additivity applied to the inclusion $H\sub N$ yields $\nu+\len H\leq\len N$. Since $\len H\neq 0$ and $\len N\leq \nu+1$, we get equality, that is to say, $\len N=\nu+1$, whence $N$ is open.
\end{proof}

\begin{proposition}\label{P:clzero}
The closure of a submodule   $N\sub M$ is equal to $N+\fl D_0(M)$. In particular, a submodule is closed \iff\ it has the same finitistic length as $M$.
\end{proposition}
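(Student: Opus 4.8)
The plan is to identify the closure of $N$ as the intersection of all closed submodules containing $N$, and then show that a submodule $C$ is closed precisely when it contains $\fl D_0(M)$ modulo $N$-type considerations. First I would recall that, as observed in the text, every open submodule is also closed, and that the complement of an open submodule is open; hence a submodule $C \sub M$ is closed \iff\ $M \setminus C$ is a union of cosets of open submodules, \iff\ for every $x \notin C$ there is an open submodule $U$ with $(x+U)\cap C = \emptyset$, i.e.\ $x + U \sub M\setminus C$, i.e.\ $U \cap (C - x) = \emptyset$... more usefully, translating by $-x$ and using that $U$ is a submodule, this says $x \notin C + U$ for some open $U$. So $C$ is closed \iff\ $C = \bigcap_{U \text{ open}} (C+U)$, i.e.\ \iff\ whenever $x \in C+U$ for every open $U$, then $x \in C$.

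The key computation is therefore to determine $\bigcap_U (N+U)$ for a fixed submodule $N$, where $U$ ranges over open submodules of $M$. I claim this intersection equals $N + \fl D_0(M)$. For the inclusion $\supseteq$: by \Cor{C:nillow} (or \Cor{C:openemb}) any open $U$ has $\dim(M/U) < \dim M$, but more to the point I need that $\fl D_0(M) \sub N + U$ for every open $U$; since $\fl D_0(M)$ is the finite-length part of $M$, and $U$ is open means $\finlen U = \finlen M$ (this is the last clause of \Prop{P:big}'s circle of ideas — actually it follows from \Cor{C:openval} via \eqref{eq:lcl}, since openness is equivalent to $\lc M\pr = \lc U\pr$ for all $\pr$, in particular at maximal $\pr$, which controls the finite-length submodule), so $\fl D_0(M) \sub U$ and hence $\fl D_0(M) \sub N+U$. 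For $\subseteq$: given $x \in \bigcap_U(N+U)$, I must produce an open submodule exhibiting $x \in N + \fl D_0(M)$; equivalently, working in $\bar M := M/(N+\fl D_0(M))$, I must show that $\bar x$ lies in every open submodule of $\bar M$ forces $\bar x = 0$ — but $\bar M$ has $\fl D_0(\bar M) = 0$ (its finite-length part is killed by passing to the quotient by $\fl D_0(M)$, using that $\fl D_0$ is the largest finite-length submodule and behaves well under this quotient), so it suffices to know: \emph{if $\finlen P = 0$, i.e.\ $P$ has no nonzero finite-length submodule, then the intersection of all open submodules of $P$ is zero}, i.e.\ the canonical topology on such $P$ is Hausdorff. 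This last fact is exactly the content flagged in the text ("the zero module is closed \iff\ the intersection of all open submodules is $0$"), and it should follow by a transfinite induction on $\len P$ paralleling \Cor{C:nonArt}: at each stage one peels off an open submodule of co-length $1$ (when $\len P$ is a limit) or uses the successor case, driving any putative nonzero element of the intersection into smaller and smaller submodules until a finite-length obstruction — which we assumed absent — would be required.

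Granting the identification $\overline{N} = \bigcap_U (N+U) = N + \fl D_0(M)$, the "in particular" is immediate: $N$ is closed \iff\ $N = N + \fl D_0(M)$ \iff\ $\fl D_0(M) \sub N$, and since always $\fl D_0(N) \sub \fl D_0(M)$ with equality \iff\ $\fl D_0(M)\sub N$ (as $\fl D_0(N) = N \cap \fl D_0(M)$), this is the same as $\finlen N = \finlen M$.

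I expect the main obstacle to be the inclusion $\bigcap_U(N+U) \subseteq N + \fl D_0(M)$, which really rests on the Hausdorff-ness statement for modules with no finite-length submodule. The delicate point in that induction is the limit-ordinal case: one needs that for a nonzero element $y \in P$ with $Ry$ of infinite length, there is an open submodule of $P$ avoiding $y$ — using \Prop{P:big} one gets open submodules $N_\alpha$ of co-length controlled by $\alpha < \len P$, and one must argue that not all of them can contain $y$, which is where the absence of a finite-length "floor" is used to prevent the element from being trapped. Making this last step precise — essentially showing $\bigcap_{\text{open } U} U = \fl D_0(M)$ in general, of which the $\supseteq$ direction is easy and the $\subseteq$ direction is the crux — is the heart of the proof; once it is in hand, everything else is bookkeeping with \Cor{C:openval} and the definition of $\fl D_0$.
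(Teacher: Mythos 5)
Your $\supseteq$ direction is fine: either your argument via $\finlen U = \finlen M$, or (more cleanly, and this is what the paper does) the observation that $\fl D_0(M)\cap U$ is open in $\fl D_0(M)$ by \Thm{T:cantop}, and a finite-length module has trivial canonical topology, so $\fl D_0(M)\sub U$ for every open $U$.

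The $\subseteq$ direction has two genuine problems. First, the reduction to $\bar M := M/(N+\fl D_0(M))$ is too aggressive: $\fl D_0(\bar M)=0$ is simply false in general. Take $R=\pow kx$ a DVR, $N=x^2R$, so $\fl D_0(M)=0$ and $\bar M=R/x^2R$ is already of finite length, i.e.\ $\fl D_0(\bar M)=\bar M\neq 0$. There the implication ``$\bar x$ in every open of $\bar M$ $\Rightarrow$ $\bar x=0$'' is badly false (the only open of $\bar M$ is $\bar M$ itself), even though the proposition holds ($x^2R$ is closed in $R$). The issue is that your ``equivalently'' is only an implication: $x\in\bigcap_U(N+U)$ does give $\bar x$ in every open of $\bar M$ by continuity, but the converse fails, so reducing the question to the trivial module $\bar M$ discards the information you need. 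The correct reduction, which the paper makes, is to $M/\fl D_0(M)$ only, using \Cor{C:orddim} to verify that the quotient topology on $M/\fl D_0(M)$ agrees with its canonical topology; one then still has to show that a general submodule $N$ is closed in a module of positive order, not merely that $0$ is.

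Second, even in the case you would want to land in (positive order, $N=0$), the ``Hausdorffness'' claim $\bigcap_U U=0$ is left as a sketch of a transfinite induction, and I don't see how to make that induction run: the natural move of passing to a smaller open submodule stumbles on the fact that the induced topology on a submodule need not equal its canonical topology (\Thm{T:indtop} requires no embedded primes), so you cannot just invoke an inductive hypothesis on a smaller module. What is actually needed, and what the paper supplies, is a pointwise argument using Krull's intersection theorem: given $x\notin N$, pick a maximal ideal $\maxim\supseteq (N:x)$; since $M$ now has positive order, every $\maxim^kM$ (hence every $N+\maxim^kM$) is open by \Prop{P:big}; and Krull's intersection theorem applied to $\bigcap_k(N+\maxim^kM)$ produces $a\in\maxim$ with $(1+a)x\in N$, contradicting $(N:x)\sub\maxim$. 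So some $N+\maxim^kM$ is an open submodule missing $x$. This is the key step your proposal is missing.
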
 
\begin{proof}
Let $H:=\fl D_0(M)$, and let $W$ be an open submodule containing $N$. By \Thm{T:cantop}, the intersection $H\cap W$ is open in $H$, and since $H$ has finite length (so that its topology is trivial), we must have $H=W\cap H$, proving that $H$ lies in $W$. As this holds for all opens  $W$ containing $N$, the closure of $N$ contains $H$.

Using \Cor{C:orddim}, one easily shows that the quotient topology on $M/H$ is equal to the canonical topology. Therefore, to calculate the closure, we may divide out $H$, assume that $M$ has positive order, and we then need to show that $N$ is closed. Let $x\in M$ be any   element not in $N$  and let $\maxim$ be a maximal ideal containing $(N:x)$. Since $M/\maxim^kM$ has finite length, each $\maxim^kM$ is open by \Prop{P:big}, whence so is each $N+\maxim^kM$. If $x$ is contained in each of these, then it is contained in their intersection $W:=\cap(N+\maxim^kM)$. By Krull's Intersection theorem, there exists $a\in \maxim$ such that $(1+a)W\sub N$. In particular, $1+a\in(N:x)\sub \maxim$, contradiction. So $x$ lies outside some open $N+\maxim^kM$, and hence does not belong to the closure of $N$.

The last assertion is now also clear by \Thm{T:cohrk}, since the finitistic length of $M$ is equal to the length of $\fl D_0(M)$.
\end{proof} 

\begin{corollary}\label{C:hauss}
    A module has a separated canonical topology \iff\ its order is positive \iff\ any submodule is closed.\qed
\end{corollary}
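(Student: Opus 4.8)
\textit{Proof sketch / plan.} The plan is to show that all three conditions are equivalent to the single condition $\fl D_0(M)=0$, where $\fl D_0(M)$ is the dimension-zero part of the dimension filtration, and then to identify this condition with positivity of the order via \Cor{C:order}.

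First I would handle the two topological reformulations. Recall from the discussion preceding the statement that the canonical topology is separated precisely when the zero submodule is closed. By \Prop{P:clzero}, the closure of a submodule $N\sub M$ is $N+\fl D_0(M)$; hence $N$ is closed \iff\ $\fl D_0(M)\sub N$. Taking $N=0$ shows that the zero module is closed — equivalently, the topology is separated — \iff\ $\fl D_0(M)=0$. For the third condition, every submodule is closed \iff\ $\fl D_0(M)\sub N$ for all $N\sub M$; the forward direction applied to $N=0$ gives $\fl D_0(M)=0$, and conversely if $\fl D_0(M)=0$ then $N+\fl D_0(M)=N$ for every $N$. So all of \textquotedblleft separated\textquotedblright, \textquotedblleft every submodule closed\textquotedblright, and \textquotedblleft $\fl D_0(M)=0$\textquotedblright\ are equivalent, and it remains only to show $\fl D_0(M)=0\iff\order{}M>0$.

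For this last step: $\fl D_0(M)$ is by definition the largest submodule of $M$ of dimension at most $0$, i.e. the largest finite-length submodule, so $\fl D_0(M)=0$ \iff\ $\finlen M=\len{\fl D_0(M)}=0$ \iff\ $M$ has no nonzero submodule of finite length. Such a submodule exists exactly when $M$ has an associated prime of dimension $0$: if $\pr\in\op{Ass}(M)$ with $\op{dim}(\pr)=0$ then $R/\pr\hookrightarrow M$ is a nonzero finite-length submodule, and conversely any nonzero finite-length submodule has an associated prime, necessarily of dimension $0$ and, by \eqref{eq:ass1}, associated to $M$. By \Cor{C:order} the order of $M$ is the least dimension of an associated prime of $M$, so $\order{}M>0$ \iff\ $M$ has no dimension-zero associated prime \iff\ $\fl D_0(M)=0$, which closes the loop. (Alternatively, by \Prop{P:dimfil} the quantity $\len{\fl D_0(M)}$ is exactly the $\omega^0$-Cantor coefficient of $\len M$, which vanishes \iff\ the order of $\len M$ is positive.)

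I do not expect any real obstacle: the statement is essentially a repackaging of \Prop{P:clzero} and \Cor{C:order}. The only point deserving a line of care is the translation \textquotedblleft$\fl D_0(M)=0\iff\order{}M>0$\textquotedblright, where one must note that $\fl D_0(M)$ captures precisely the finite-length content of $M$, equivalently the associated primes of dimension zero.
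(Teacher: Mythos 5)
Your proof is correct and follows the route the paper clearly intends for this \qed-marked corollary: reduce all three conditions to $\fl D_0(M)=0$ via \Prop{P:clzero} and the remark that the topology is Hausdorff iff $0$ is closed, then translate $\fl D_0(M)=0$ into positivity of the order via \Cor{C:order} (or equivalently \Prop{P:dimfil}). No gaps.
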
 

\begin{example}\label{E:adictop}
If $(R,\maxim)$ is   local and $M$ has positive depth, then the canonical topology refines the $\maxim$-adic topology on $M$: indeed $\maxim^k M$ is then open by \Prop{P:big}. We already showed that this is no longer true in rings of positive depth in \Rem{R:lenprim}. 
Also note that the canonical topology on a local domain is strictly finer than its adic topology, since all non-zero ideals are open.   

The ring $\pow k{x,y}/(x^2,xy)$, of length $\omega+1$, is not Haussdorf as the closure of the zero ideal is the ideal $(x)$ by \Prop{P:clzero}. It is the only closed, non-open ideal, since the closure of any ideal must contain $x$ whence is open when different from $(x)$. In particular, whereas the canonical topology is not Haussdorf,  the adic one is.
\end{example} 

Recall that a submodule $N\sub M$ is called \emph{essential} (or \emph{large}), if it intersects any non-zero submodule non-trivially.

\begin{corollary}\label{C:bigbg}
An open submodule is {essential}. In particular,  $0$ is a limit point of any non-zero submodule.
\end{corollary}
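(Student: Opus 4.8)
The plan is to argue by contradiction and to exploit the fact that length is \emph{strictly} increasing for the shuffle sum. Suppose $N\sub M$ is open but not essential, so that there is a non-zero submodule $L\sub M$ with $N\cap L=0$. First I would observe that the submodule $N+L$ of $M$ is then the internal direct sum $N\oplus L$, so that the exact sequence $\Exactseq N{N+L}L$ splits; by the split case of semi-additivity (the cited Semi-additivity Theorem), $\len{N+L}=\len N\ssum\len L$. Since $N$ is open, $\len N=\len M$, so this reads $\len{N+L}=\len M\ssum\len L$.

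Next I would invoke \Thm{T:submod}: because $N+L$ is a submodule of $M$, we get $\len{N+L}\aleq\len M$, that is, $\len M\ssum\len L\aleq\len M$. The remaining step is purely ordinal-combinatorial: the $i$-th Cantor coefficient of $\len M\ssum\len L$ is the sum of the $i$-th Cantor coefficients of $\len M$ and of $\len L$, so the inequality $\len M\ssum\len L\aleq\len M$ forces every Cantor coefficient of $\len L$ to vanish, whence $\len L=0$ and therefore $L=0$, contradicting the choice of $L$. This proves that every open submodule is essential.

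For the final assertion, recall that the open submodules form a basis of neighborhoods of $0$ in the canonical topology. Hence any neighborhood of $0$ contains some open submodule $U$, which is essential by the first part, so $U\cap L\neq 0$ for the given non-zero submodule $L$; any $x\in U\cap L$ with $x\neq 0$ is then a point of $L$, distinct from $0$, lying in that neighborhood. Thus $0$ is a limit point of $L$.

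I do not expect a genuine obstacle here; the one thing to get right is to form the direct sum \emph{inside} $M$ rather than to work with the induced embedding $L\into M/N$. The latter only yields $\len L\aleq\len{M/N}$ together with the ordinary (non-$\aleq$) semi-additivity bound $\len{M/N}+\len N\le\len M$, and because of absorption phenomena such as $1+\omega=\omega$ this is too weak to conclude $L=0$. Using $N\oplus L\sub M$ and \Thm{T:submod} instead replaces the fragile ordinal sum by the shuffle sum, for which strict monotonicity in each argument is automatic, and the argument closes immediately.
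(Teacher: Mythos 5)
Your proof is correct and follows essentially the same route as the paper's: form the internal direct sum $N\oplus L\sub M$, apply the split case of semi-additivity to get $\len N\ssum\len L$, compare with $\len M$ via openness of $N$, and conclude $L=0$; the second assertion is argued identically. The only (immaterial) difference is that you invoke \Thm{T:submod} to get the $\aleq$-inequality $\len{N+L}\aleq\len M$, whereas the paper just uses the immediate fact that a submodule has length $\leq$ that of the ambient module together with strict monotonicity of $\ssum$ in the total order.
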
 
\begin{proof}
Let $N\sub M$   have the same length and let $H$ be an arbitrary  submodule. Suppose $H\cap N=0$, so that $H\oplus N$ embeds  as a submodule of $M$. In particular, $\len H\ssum\len N\leq \len M$ by semi-additivity, forcing $\len H$, whence $H$, to be zero.

To prove the second assertion, let $N$ be non-zero and let $U$ be an open containing $0$. Since $U$ is the union of cosets of open submodules and contains $0$, it must contain at least one open submodule $W$. Since $W$ is essential by our first assertion, $W\cap N\neq0$.
\end{proof} 

The converse is false: in an Artinian local ring, the socle is essential, but it is clearly not open.  A less trivial example is given by the ideal $\pr:=(x,y)$ in the ring $R=\pol k{x,y,z}/\pr^2$, which is essential but not open. Indeed, $\fcyc {}R=3[\pr]$ and hence $R$ has length $3\omega$ by \Thm{T:cohrk}. Since $R/\pr$ is a one-dimensional domain, its length is $\omega$ by  \Thm{C:orddim}, and hence $\len\pr=2\omega$ by \Cor{C:orddim}. To see that $\pr$ is essential, we can use the following proposition with $S=\pol k{x,y}/\pr^2$ (so that $R=\pol Sz$). 
 
\begin{proposition}\label{P:essff}
Let $(S,\pr)$ be a local ring and $S\to R$ a   flat extension. Then $\pr R$ is an essential ideal of $R$.
\end{proposition}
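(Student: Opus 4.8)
The plan is to discard the ring structure on $R$ and regard $R$ simply as a flat module over $S$. Since every ideal of $R$ is in particular an $S$-submodule of $R$, and $\pr R$ is the same subset however one interprets the product, it suffices to prove that $\pr R$ is essential as an $S$-submodule of the $S$-module $R$, which is flat over $S$ by hypothesis. I will also assume $\pr\neq 0$; if $S$ is a field then $\pr R=0$ and the statement is degenerate.

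The technical core is the following fact about flat modules: \emph{if $M$ is any flat $S$-module then $(0:_M\pr)\subseteq\pr M$}, where $(0:_M\pr)$ is the submodule of elements of $M$ annihilated by $\pr$. To prove it, write $\pr=(y_1,\dots,y_t)$; tensoring the exact sequence $0\to(0:_S y_i)\to S\xrightarrow{y_i}S$ with the flat module $M$ identifies $(0:_M y_i)$ with $(0:_S y_i)M$, and since $M$ is flat the operation $I\mapsto IM$ commutes with finite intersections of ideals (apply $-\otimes_S M$ to $\Exactseq{I\cap J}{I\oplus J}{I+J}$). Hence $(0:_M\pr)=\bigcap_i(0:_M y_i)=\bigl(\bigcap_i(0:_S y_i)\bigr)M=(0:_S\pr)M$. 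Finally, because $S$ is local and $\pr\neq 0$ a unit cannot annihilate $\pr$, so $(0:_S\pr)\subseteq\pr$ and therefore $(0:_M\pr)\subseteq\pr M$.

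Granting this, essentiality follows at once. Let $N\subseteq R$ be a nonzero $S$-submodule and choose $0\neq m\in N$. If $m\in\pr R$ we are done. Otherwise the lemma applied to $M=R$ gives $m\notin(0:_R\pr)$, that is, $\pr m\neq 0$; picking $s\in\pr$ with $sm\neq 0$ we obtain $0\neq sm\in Sm\cap\pr R\subseteq N\cap\pr R$. Thus $\pr R$ meets every nonzero $S$-submodule of $R$, hence every nonzero ideal, nontrivially.

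The only place I expect to need real care is the flat-module lemma $(0:_M\pr)\subseteq\pr M$; the reduction and the final step are purely formal. One could also obtain $(0:_M\pr)=(0:_S\pr)M$ from the natural isomorphism $\hom{S}{S/\pr}{S}\otimes_S M\iso\hom{S}{S/\pr}{M}$, which holds since $S/\pr$ is finitely presented and $M$ is flat. It is worth noting that $\pr=0$ is genuinely exceptional, so the hypotheses implicitly assume $S$ is not a field.
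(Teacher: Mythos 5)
Your proof is correct and takes essentially the same route as the paper's: both rest on the flatness fact $(0:_R\pr)\subseteq(0:_S\pr)R\subseteq\pr R$, the paper phrasing it as a short contrapositive (``if $\pr R\cap xR=0$ then $x\pr=0$, so by flatness $x\in\ann S\pr R\subseteq\pr R$''). You simply spell out the details that the paper asserts in one line (deriving $(0:_M\pr)=(0:_S\pr)M$ from $I\mapsto IM$ commuting with finite intersections) and you usefully flag the implicit hypothesis $\pr\neq 0$.
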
 
\begin{proof}
If $\pr R$ were not essential, we could find a non-zero $x\in R$ such that $\pr R\cap xR=0$. In particular, $x\pr=0$. By  flatness, $x\in\ann S\pr R\sub\pr R$, contradiction.
\end{proof}

\begin{theorem}\label{T:indtop}
If $M$ has no embedded primes, then the induced topology on a submodule $N\sub M$ is the same as the canonical topology on $N$.
\end{theorem}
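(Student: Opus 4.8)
The plan is to compare the two topologies on $N$ neighbourhood by neighbourhood. One inclusion is free: by \Thm{T:cantop}, $N\cap U$ is open in $N$ for every open $U\sub M$, so the subspace topology on $N$ is contained in the canonical topology on $N$. Since the canonical open sets of $N$ are exactly the unions of cosets of open submodules of $N$, the reverse inclusion will follow once I show that every open submodule $W\sub N$ can be written as $N\cap U$ for some open submodule $U\sub M$ (then $w+W=N\cap(w+U)$ is subspace-open for every $w\in N$). I will build such a $U$ in the form $U:=yM+W$ for a suitably chosen non-zerodivisor $y$ on $M$.

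Set $\id:=\ann R{N/W}$. The first step is to show that $\id$ lies in no associated prime of $M$. Let $\pr\in\op{Ass}(M)$; as $M$ has no embedded primes, $\pr$ is a minimal prime of $M$. If $\pr\notin\op{Supp}(N)$ then trivially $(N/W)_\pr=0$. If $\pr\in\op{Supp}(N)$, then, $\op{Supp}(N)$ being contained in $\op{Supp}(M)$, the prime $\pr$ is also minimal in $\op{Supp}(N)$, so $N_\pr$ has finite length over $R_\pr$; hence $\lc N\pr=N_\pr$ and $\lc W\pr=W_\pr$, and openness of $W$ in $N$ gives $W_\pr=N_\pr$ by \Cor{C:openval}, so again $(N/W)_\pr=0$. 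In either case $\id\not\sub\pr$, and since $\pr$ is prime this forces $\id^k\not\sub\pr$ for every $k\geq 1$.

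The second step pins down the exponent via Artin--Rees: choose $c$ with $\id^nM\cap N=\id^{n-c}(\id^cM\cap N)$ for all $n\geq c$, so that
$$
\id^{c+1}M\cap N=\id(\id^cM\cap N)\sub\id N\sub W,
$$
the last inclusion because $\id$ kills $N/W$. Now $\op{Ass}(M)$ is finite and $\id^{c+1}$ avoids each of its members, so prime avoidance yields $y\in\id^{c+1}$ lying outside every associated prime of $M$; such a $y$ is a non-zerodivisor on $M$, whence multiplication by $y$ gives an isomorphism $yM\iso M$, so $\len{yM}=\len M$ and $U:=yM+W\supseteq yM$ is open in $M$. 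Finally, using the modular law (and $W\sub N$),
$$
N\cap U=N\cap(yM+W)=(N\cap yM)+W=W,
$$
since $yM\cap N\sub\id^{c+1}M\cap N\sub W$. Thus $W=N\cap U$ with $U$ open in $M$, which completes the argument.

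The crux is the choice of $y$. The naive attempt — taking $U=yM+W$ with $y$ merely a non-zerodivisor, or merely an element of $\id$ — fails: for $R=\pol kx$, $M=R$, $N=(x^2)$ and $W=(x^3)$ (which is open in $N\iso R$ since both have length $\omega$), the element $y=x\in\id=(x)$ gives $N\cap\bigl((x)+(x^3)\bigr)=(x^2)\neq W$. Artin--Rees is precisely what lets us climb to the power $\id^{c+1}$ so that $yM\cap N$ is forced inside $W$, and the no-embedded-primes hypothesis is used exactly to keep $\id$ — hence all of its powers — out of every associated prime of $M$, so that a non-zerodivisor on $M$ can be found inside it.
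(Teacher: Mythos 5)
Your proof is correct, and it takes a genuinely different route from the paper's. The paper works indirectly: it picks $U\sub M$ maximal with $U\cap N=W$ (which exists by Noetherianity), assumes $U$ is not open, and then uses the maximality to produce an element of $N/W$ which is $\pr$-torsion at an associated prime $\pr$ of $M$; since $\pr$ must be minimal, the exact local-cohomology sequence of \Lem{L:loccohgen} then contradicts the openness of $W$ in $N$. You instead construct the open extension $U$ explicitly as $yM+W$. Your first step (that $\id:=\ann R{N/W}$ avoids every associated prime of $M$) is where the no-embedded-primes hypothesis enters in both arguments, and your derivation is clean: minimality of $\pr$ in $\op{Supp}(M)$ forces minimality in $\op{Supp}(N)$, so $N_\pr$ is finite-length and \Cor{C:openval} gives $(N/W)_\pr=0$. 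The second step, using Artin--Rees to push a power of $\id$ deep enough that $\id^{c+1}M\cap N\sub W$, followed by prime avoidance to find a non-zerodivisor $y\in\id^{c+1}$, and then the modular law to verify $N\cap(yM+W)=W$, is all correct. What your approach buys is an explicit formula for an open $U$ extending $W$ rather than a mere existence statement; what it costs is invoking Artin--Rees and a somewhat longer argument. Your remark identifying exactly why a naive choice of $y$ fails (the $R=\pol kx$, $N=(x^2)$, $W=(x^3)$ example) is a useful sanity check that the power $\id^{c+1}$ is really needed.
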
 
\begin{proof}
In view of \Thm{T:cantop}, we only have to show that if $W\sub N$ is open, then there exists $U\sub M$ open such that $W=U\cap N$. Let $U$ be maximal such that $U\cap N=W$. Suppose $U$ is not open, so that there exists an associated prime $\pr$ of $M$ with $\lc U\pr\varsubsetneq \lc M\pr$ by \Cor{C:openval}. In particular, we can find $x\in M\setminus U$ with $s\pr x\sub U$, for some $s\notin\pr$. By maximality, $(U+Rx)\cap N$ must contain an element $n$ not in $W$. Write $n=u+rx$ with $u\in U$ and $r\in R$. In particular, $s\pr n=s\pr u$ lies in $U\cap N=W$. In other words, we showed that $\lc{N/W}\pr\neq 0$. In particular, $N_\pr\neq 0$, so that $\pr$ is a minimal prime of $N$. By \Lem{L:loccohgen}, we have $\lcl \pr W<\lcl \pr W+\lcl \pr {W/N}=\lcl \pr N$, so that $\len W<\len N$ by \Thm{T:cohrk}, contradiction.
\end{proof} 

I do not know whether this remains still true if there are embedded primes, but see \cite[Corollary 4.4]{SchCond}.

\section{Higher order topologies}
Given a submodule $N\sub M$, we call any submodule $H$ such that $N\cap H=0$ and $N+H$ is open a \emph{quasi-complement} of $M$. Quasi-complements may not always exist, but they do, for instance, for binary modules (\cite[Proposition 4.12]{SchBinEndo}).  In fact, in \cite[Corollary 4.5]{SchCond}, we show that their existence is equivalent with the module being condense, meaning that every essential submodule is open. 

Given $-1\leq i\leq d$, we call any quasi-complement of $\fl D_i(M)$ an \emph{$i$-open}. If $i<\order{}M$, then $i$-open is the same as open, and, in particular, $(-1)$-open   just means open. Given an ordinal $\mu$   and $i\in\nat$, we will write $\mu^+_i$ for the sum of all terms in the Cantor normal form of $\mu$ of degree at least $i+1$.

\begin{lemma}\label{L:iopen}
Let $M$ be a module of length $\mu$. 
A submodule $N\sub M$ is an $i$-open \iff\ its length is equal to $\mu^+_i$. 
\end{lemma}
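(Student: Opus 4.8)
The plan is to run everything through the dimension filtration module $D:=\fl D_i(M)$ and the strongly \add\ sequence $\Exactseq DM{M/D}$ supplied by \Prop{P:dimfil}. Write $\delta:=\len D$ and $\theta:=\len{M/D}$. By \Thm{T:dim} the degree of $\delta$ is $\dim D\leq i$, while the proof of \Prop{P:dimfil} shows that $M/D$ has no associated prime of dimension $\leq i$, so by \Cor{C:order} the order of $\theta$ is at least $i+1$. Since the sequence is strongly \add, $\mu=\theta+\delta=\theta\ssum\delta$; comparing Cantor normal forms, the terms of $\mu$ of degree $\geq i+1$ are exactly those of $\theta$ and those of degree $\leq i$ exactly those of $\delta$, so $\theta=\mu^+_i$ (and $\delta$ is the complementary "low" part). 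Thus the lemma reduces to showing that $N\sub M$ is a quasi-complement of $D$ \iff\ $\len N=\theta$.

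For the forward direction, if $N$ is an $i$-open then $N\cap D=0$, so the internal sum $N+D$ is isomorphic to $N\oplus D$; as $N+D$ is open in $M$, the split case of semi-additivity gives $\mu=\len{N+D}=\len N\ssum\delta$. Comparing with $\mu=\theta\ssum\delta$ and cancelling $\delta$ (the shuffle sum is cancellative, being coefficient-wise addition of Cantor coefficients) yields $\len N=\theta=\mu^+_i$.

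For the converse, suppose $\len N=\theta$. Since $N\cap D$ embeds in both $N$ and $D$, \Thm{T:submod} gives $\len{N\cap D}\aleq\theta$ and $\len{N\cap D}\aleq\delta$, so the support of $\len{N\cap D}$ is contained in that of $\theta$ (hence in $\{i+1,i+2,\dots\}$) and in that of $\delta$ (hence in $\{0,\dots,i\}$); being empty, $\len{N\cap D}=0$, i.e.\ $N\cap D=0$. Then again $N+D\iso N\oplus D$, so $\len{N+D}=\theta\ssum\delta=\mu=\len M$, whence $N+D$ is open in $M$; thus $N$ is a quasi-complement of $\fl D_i(M)$, i.e.\ an $i$-open. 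I do not expect a genuine obstacle here: the whole argument is powered by \Prop{P:dimfil} and the split-exact case of semi-additivity. The one point that must be handled with care is this "separation of scales" — that $\len{\fl D_i(M)}$ is concentrated in degrees $\leq i$ and $\len{M/\fl D_i(M)}$ in degrees $\geq i+1$ — since it is used twice: once to read off $\theta=\mu^+_i$, and once to force $N\cap D=0$ from the incompatibility of a degree-$\leq i$ ordinal with an order-$\geq i+1$ ordinal.
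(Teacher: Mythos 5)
Your proof is correct and follows essentially the same route as the paper's: both use \Prop{P:dimfil} to identify $\len{\fl D_i(M)}$ with the low part $\mu^-$ of $\mu$, then apply the split case of semi-additivity for the forward direction, and use \Thm{T:submod} together with the disjointness of the supports of $\mu^+_i$ and $\mu^-$ to force $N\cap\fl D_i(M)=0$ in the converse. The only difference is cosmetic — you re-derive the separation of scales that the paper simply quotes from \Prop{P:dimfil}.
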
 
\begin{proof}
 Write $\mu=\mu^+\ssum\mu^-$ with $\mu^+:=\mu^+_i$, and let   $D:=\fl D_i(M)$. By \Prop{P:dimfil}, we have $\len D=\mu^-$. If   $N$ is an $i$-open, then $N\cap D=0$ and $\len{N+D}=\mu$. By additivity, $\mu=\len N\ssum\len D$, and hence $\len N=\mu^+$. Conversely, suppose $\len N=\mu^+$ and put $\alpha:=\len{D\cap N}$. By \Thm{T:submod}, we have  $\alpha\aleq \mu^+$ and $\alpha\aleq\mu^-$, which implies $\alpha=0$. Therefore $N+D\iso N\oplus D$ has length $\mu^+\ssum\mu^-=\mu$, proving that it is open.
\end{proof} 

Together with \Thm{T:submod}, this shows that there are always non-trivial $i$-opens whenever $i<d$, where $d$ is the dimension of $M$. In particular, $(d-1)$-opens are the unmixed submodules  of the same generic length  as $M$. The analogue of \Thm{T:cantop} also holds:

\begin{theorem}\label{T:itop}
The inverse image of an $i$-open   under a morphism is again an $i$-open. In particular, if $N\sub  M$ is an $i$-open, and $W\sub M$ is arbitrary, then $N\cap W$ is an $i$-open in $W$.
\end{theorem}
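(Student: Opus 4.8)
The plan is to follow the proof of \Thm{T:cantop}: first prove the second (``in particular'') assertion, then deduce the first by the graph trick. For the second assertion, set $\mu:=\len M$ and $\nu:=\len W$, and write $\mu=\mu^+\ssum\mu^-$, $\nu=\nu^+\ssum\nu^-$ with $\mu^+:=\mu^+_i$ and $\nu^+:=\nu^+_i$, so that $\mu^-,\nu^-$ collect the Cantor terms of degree $\leq i$ while $\mu^+,\nu^+$ have order $\geq i+1$. Put $\alpha:=\len{N\cap W}$ and $\theta:=\len{N+W}$; since $N$ is an $i$-open, $\len N=\mu^+$ by \Lem{L:iopen}. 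I would apply semi-additivity to the split exact sequence $\Exactseq{N\cap W}{N\oplus W}{N+W}$ to get $\mu^+\ssum\nu=\len N\ssum\len W=\len{N\oplus W}\leq\theta\ssum\alpha$. From $N\subseteq N+W\subseteq M$ and \Thm{T:submod} we get $\mu^+\aleq\theta\aleq\mu$, which by comparison of Cantor coefficients forces $\theta=\mu^+\ssum\sigma$ with $\sigma\aleq\mu^-$; in particular $\sigma$ is concentrated in degrees $\leq i$. Likewise $N\cap W\subseteq N$ gives $\alpha\aleq\mu^+$, so $\alpha$ has no term of degree $\leq i$. Substituting, $\mu^+\ssum\nu\leq\mu^+\ssum(\sigma\ssum\alpha)$, and cancelling $\mu^+$ (valid because $\beta\ssum\gamma\leq\beta\ssum\delta$ iff $\gamma\leq\delta$, by comparing Cantor coefficients) gives $\nu\leq\sigma\ssum\alpha$. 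Reading this in degrees $\geq i+1$, where $\sigma$ contributes nothing, yields $\nu^+\leq\alpha$; and $N\cap W\subseteq W$ gives $\alpha\aleq\nu$, which --- since $\alpha$ lives in degrees $\geq i+1$ --- sharpens to $\alpha\aleq\nu^+$. Hence $\alpha=\nu^+_i$, so $N\cap W$ is an $i$-open in $W$ by \Lem{L:iopen}.

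For the first assertion, let $f\colon M\to P$ be any morphism and $V\subseteq P$ an $i$-open. By \Thm{T:submod} there is a submodule $U_0\subseteq M$ with $\len{U_0}=(\len M)^+_i$, i.e.\ an $i$-open of $M$ by \Lem{L:iopen}. Since $\len{U_0\oplus V}=(\len M)^+_i\ssum(\len P)^+_i=(\len{M\oplus P})^+_i$, the submodule $U_0\oplus V$ is an $i$-open of $M\oplus P$. Let $G\subseteq M\oplus P$ be the graph of $f$ and $p\colon G\to M$ the projection, an isomorphism. By the second assertion, $(U_0\oplus V)\cap G$ is an $i$-open of $G$; since $p$ is an isomorphism, and isomorphisms preserve both the dimension filtration and length, its image $p\bigl((U_0\oplus V)\cap G\bigr)=U_0\cap\inverse fV$ is an $i$-open of $M$. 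Thus $\inverse fV$ contains an $i$-open of $M$, so once the $i$-th topology is defined with the $i$-opens of $M$ as a basis of neighbourhoods of $0$ --- a legitimate basis, since a finite intersection of $i$-opens of $M$ is again one by the second assertion --- this says precisely that $\inverse fV$ is open in the $i$-th topology. The ``in particular'' clause is the second assertion itself.

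The step I expect to be the main obstacle is the graph trick: unlike the case $i=-1$ in \Thm{T:cantop}, the submodule $M\oplus V$ is \emph{not} an $i$-open of $M\oplus P$ once $\fl D_i(M)\neq 0$, so the argument cannot be run verbatim. Thickening to $U_0\oplus V$ repairs it, but yields only that $\inverse fV$ contains an $i$-open --- which is in any case the right form of the conclusion: for $f=0$ one gets $\inverse fV=M$, not an $i$-open submodule in general but of course open in the $i$-th topology, and an honest $i$-open exactly when $\fl D_i(\ker f)=0$, e.g.\ when $f$ is injective. The rest --- the Cantor-coefficient bookkeeping in the second assertion, separating what happens in degrees $\geq i+1$ from degrees $\leq i$ --- is routine given the ordered cancellative monoid structure of $(\ord,\ssum,\aleq)$ recalled in \S\ref{s:Ord}.
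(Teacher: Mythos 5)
Your proof of the second assertion matches the paper's in every essential respect; the differences are purely notational (you write $\nu,\theta$ where the paper writes $\theta,\beta$ for $\len W$ and $\len{N+W}$), plus you spell out the cancellation of $\mu^+$ and the passage to degrees $\geq i+1$ that the paper leaves implicit. Both proofs apply semi-additivity to $\Exactseq{N\cap W}{N\oplus W}{N+W}$, use $\len N=\mu^+$ from \Lem{L:iopen}, identify $(\len{N+W})^+_i$ with $\mu^+$ via \Thm{T:submod}, pass to high degrees, cancel, and finish with $N\cap\fl D_i(W)\sub N\cap\fl D_i(M)=0$ to conclude $\len{N\cap W}=(\len W)^+_i$.

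For the first assertion, the paper writes only ``as in the proof of \Thm{T:cantop}, we only need to verify the latter property,'' and you have located the real obstruction in that reduction. The graph trick of \Thm{T:cantop} needs $M\oplus V$ to be an $i$-open of $M\oplus P$, which by \Lem{L:iopen} requires $\len M=(\len M)^+_i$, i.e.\ $\fl D_i(M)=0$ --- exactly the regime $i<\order{}M$ in which $i$-open means open and \Thm{T:itop} collapses to \Thm{T:cantop}. When $\fl D_i(M)\neq 0$ the first assertion as literally worded is false: for $f=0$ the preimage of any $i$-open is $M$, which is then not an $i$-open of itself. Your thickening --- replacing $M$ by an $i$-open $U_0\sub M$, whose existence follows from \Thm{T:submod} and \Lem{L:iopen}, and noting $\len{U_0\oplus V}=(\len{M\oplus P})^+_i$ --- correctly repairs the graph argument and yields that $U_0\cap\inverse fV$, hence $\inverse fV$, contains an $i$-open; equivalently, by \Lem{L:openitop}, $\inverse fV$ is open in the $i$-th order topology. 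That weaker conclusion is exactly what the paper invokes in the next paragraph to deduce continuity, so the first assertion ought to be restated in that form. Your further remark that $\inverse fV$ is an honest $i$-open precisely when $\fl D_i(\op{ker}(f))=0$ is also correct, since $\fl D_i(M)\cap\inverse fV=\fl D_i(M)\cap\op{ker}(f)$ (any $x\in\fl D_i(M)$ with $f(x)\in V$ has $f(x)\in\fl D_i(P)\cap V=0$).
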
 
\begin{proof}
As in the proof of \Thm{T:cantop}, we only need to verify the latter property. Let $\mu:=\len M$, $\theta:=\len W$, $\alpha:=\len {N\cap W}$ and $\beta:=\len {N+W}$. As before, we decompose any ordinal $\lambda$ as $\lambda^+\ssum\lambda^-$ with $\lambda^+=\lambda_i^+$.  In particular, $\len N=\mu^+$ by \Lem{L:iopen}, and hence  also  $\mu^+=\beta^+$ by  \Thm{T:submod}. Semi-additivity applied to the exact sequence $\Exactseq {N\cap W}{N\oplus W}{N+W}$ gives $\mu^+\ssum\theta \leq \alpha\ssum\beta$.  and so, taking dropping all terms of degree at most $i$ in the above inequality yields $\mu^+\ssum\theta^+\leq \alpha^+\ssum\mu^+$. Since $\alpha\aleq\theta$ by \Thm{T:submod}, we get $\theta^+=\alpha^+$. On the other hand, since $N\cap \fl D_i(W)\sub N\cap \fl D_i(M)=0$, we get $\alpha=\alpha^+$ by \Thm{T:cohrk},   showing that $N\cap W$ is an $i$-open in $W$ by \Lem{L:iopen}.
\end{proof} 

This allows us to define a topology based on $i$-opens: the \emph{$i$-th order topology} on $M$ is defined by using as basic open subsets the $i$-opens and their cosets. By \Thm{T:itop}, any morphism between modules is continuous in the respective $i$-th order topologies on these modules. Unlike the canonical topology (that is to say, say, the $(-1)$-th order topology), a submodule may be open in the $i$-th order topology  without being an $i$-open (for instance, any open submodule is also open in the $i$-th order topology):

\begin{lemma}\label{L:openitop}
A submodule $N\sub M$ is open in the $i$-th order topology \iff\ $\mu^+_i\aleq\len N$, where $\mu:=\len M$.
\end{lemma}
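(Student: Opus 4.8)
The plan is to reduce everything to the already-established combinatorial facts \Lem{L:iopen} and \Thm{T:submod}, mimicking the analysis of the canonical topology carried out earlier. First I would identify which submodules are open in the $i$-th order topology. Since the basic opens of that topology are the $i$-opens of $M$ together with their cosets, every open set is a union of cosets of $i$-opens. If a submodule $N$ happens to be such a union, then one of the cosets, $x+U$ say, contains $0$; as $U$ is a submodule this forces $x\in U$, hence $U=x+U\sub N$. Conversely, if $N$ contains some $i$-open $U$ of $M$, then $N=\bigcup_{x\in N}(x+U)$ is a union of cosets of $U$, whence open. So a submodule is open in the $i$-th order topology \iff\ it contains an $i$-open of $M$.

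Next I would translate this into a length condition. By \Lem{L:iopen}, a submodule $U\sub M$ is an $i$-open exactly when $\len U=\mu^+_i$. Combining with the previous paragraph, $N$ is open in the $i$-th order topology \iff\ there is a submodule $U\sub N$ with $\len U=\mu^+_i$. Now \Thm{T:submod} settles both directions: if such a $U$ exists then $\mu^+_i=\len U\aleq\len N$ by its first assertion; conversely, if $\mu^+_i\aleq\len N$, its second assertion (applied with $N$ in the role of the ambient module and $\nu=\mu^+_i$) produces a submodule $U\sub N$ of length exactly $\mu^+_i$, which by \Lem{L:iopen} is an $i$-open of $M$ sitting inside $N$. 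This gives both implications.

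I do not anticipate a genuine obstacle: all the substance already resides in \Thm{T:submod} and \Lem{L:iopen}. The only step needing a little care is the first one — checking that a submodule which is a union of cosets of $i$-opens must in fact contain one of those $i$-opens — but this is precisely the elementary observation used earlier for the canonical topology, so it should go through with no difficulty.
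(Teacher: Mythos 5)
Your proof is correct and follows essentially the same route as the paper: reduce the statement to the claim that a submodule is open in the $i$-th order topology \iff\ it contains an $i$-open, then translate both directions into length conditions via \Lem{L:iopen} and the two halves of \Thm{T:submod}. The only cosmetic difference is that you isolate the containment criterion as an explicit preliminary step, whereas the paper folds it into the two implications directly.
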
 
\begin{proof}
Let $\nu:=\len N$. If $\mu^+_i\aleq\nu$, then $N$ contains a submodule $U$ of length $\mu_i^+$ by \Thm{T:submod}. As $U$ is $i$-open by \Lem{L:iopen}, we see that $N$ is open in the $i$-th order topology, as it is the union of all $a+U$ for $a\in N$.  Conversely, if $N$ is open in this topology, it is a union of basic open subsets. Let $U$ be one such basic open containing $0$. As $U$ is a coset of an $i$-open submodule, it must be equal to it. Hence $\len U=\mu^+_i$ by \Lem{L:iopen}, and by \Thm{T:submod}, this then yields $\mu^+_i\aleq \nu$.
\end{proof}

\begin{proposition}\label{P:powmaxid}
In a local ring $(R,\maxim)$, any sufficiently high power of $\maxim$ is $0$-open. In particular, any primary ideal is open in the $0$-th order topology.
\end{proposition}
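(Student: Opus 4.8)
The statement to prove is \Prop{P:powmaxid}: in a local ring $(R,\maxim)$, any sufficiently high power $\maxim^k$ is $0$-open, and consequently any primary ideal is open in the $0$-th order topology. By \Lem{L:iopen} with $i=0$, being $0$-open means having length exactly $\mu^+_0$, where $\mu:=\len R$ and $\mu^+_0$ is the sum of all Cantor terms of $\mu$ of degree at least $1$; equivalently, writing $\mu=\mu^+_0\ssum n$ with $n\in\nat$ the finitistic length $\finlen R=\len{\fl D_0(R)}$, we must produce $k$ with $\len{\maxim^k}=\mu^+_0$, i.e.\ $\maxim^k$ has finite co-length in the ``top part'' and trivial finitistic length.

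The plan is to establish the equivalent reformulation that $\len{\maxim^k}=\mu^+_0$ for all sufficiently large $k$, where $\mu:=\len R$; by \Lem{L:iopen} this says precisely that $\maxim^k$ is $0$-open. If $\dim R=0$ the ring is Artinian, so $\maxim^k=0$ eventually while $\mu^+_0=0$, and the claim is trivial (the zero submodule is the only $0$-open one). Hence I may assume $\dim R\geq1$, so that $\maxim^k\neq0$ for every $k$.

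The first substantive step is to kill the finitistic length of $\maxim^k$ for large $k$. Put $H:=\fl D_0(R)$; it has finite length, so the descending chain $\{H\cap\maxim^k\}_k$ of submodules of $H$ stabilizes, say from $k=k_0$ on, while its total intersection is $H\cap\bigcap_k\maxim^k=0$ by Krull's intersection theorem. Thus $H\cap\maxim^k=0$ for all $k\geq k_0$. For such $k$, every finite-length submodule of $\maxim^k$ is a finite-length submodule of $R$, hence lies in $H$, hence is $0$; that is, $\finlen{\maxim^k}=0$. Since $R$ is local, the constant Cantor coefficient of $\len{\maxim^k}$ equals $\finlen{\maxim^k}$ (by \Thm{T:cohrk} together with \eqref{eq:lcl}, $\maxim$ being the unique prime of dimension $0$; equivalently, by \Cor{C:order} the order of $\len{\maxim^k}$ is then positive), so $\len{\maxim^k}$ has vanishing constant term whenever $k\geq k_0$.

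Now fix $k\geq k_0$ and write $\alpha:=\len{\maxim^k}$. By \Thm{T:submod} we have $\alpha\aleq\mu$, and since $\alpha$ has zero constant term this already gives $\alpha\aleq\mu^+_0$. For the reverse inequality, apply the upper semi-additivity bound to the exact sequence $\Exactseq{\maxim^k}R{R/\maxim^k}$: as $R/\maxim^k$ is Artinian, $\mu\leq n\ssum\alpha$ with $n:=\len{R/\maxim^k}\in\nat$. Comparing Cantor normal forms, write $\mu=\mu^+_0\ssum n_0$ with $n_0\in\nat$ its constant term; both $\mu^+_0$ and $\alpha$ have vanishing constant term, so if they differed at all it would be at some degree $j\geq1$, and then $j$ would also be the highest coefficient at which $\mu$ and $n\ssum\alpha$ differ, whence $\mu\leq n\ssum\alpha$ would force the $\omega^j$-coefficient of $\mu^+_0$ to be at most that of $\alpha$ — contradicting $\alpha\aleq\mu^+_0$. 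Therefore $\alpha=\mu^+_0$, so $\maxim^k$ is $0$-open by \Lem{L:iopen}. Finally, a $\maxim$-primary ideal $\mathfrak q$ contains $\maxim^k$ for some $k$ by Noetherianity, and choosing $k\geq k_0$ gives $\mu^+_0=\len{\maxim^k}\aleq\len{\mathfrak q}$ by \Thm{T:submod}, so $\mathfrak q$ is open in the $0$-th order topology by \Lem{L:openitop}. The only delicate point is this last Cantor-normal-form comparison — verifying that the finite ``Artinian tail'' $n$ cannot perturb the positive-degree coefficients; everything else is a direct combination of \Thm{T:cohrk}, \Thm{T:submod}, and Krull's intersection theorem.
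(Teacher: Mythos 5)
Your proof is correct and follows essentially the same strategy as the paper's: first arrange that $\maxim^k\cap\fl D_0(R)=0$ for all large $k$, then pin down $\len{\maxim^k}=\mu^+_0$ by sandwiching between the lower bound $\len{\maxim^k}\aleq\len R$ and the upper semi-additivity bound $\len R\leq\len{R/\maxim^k}\ssum\len{\maxim^k}$, and finally pass to primary ideals since they contain high powers of $\maxim$. The paper reaches $\maxim^k\cap\fl D_0(R)=0$ via Artin--Rees where you use Krull's intersection theorem with chain stabilization, and it case-splits on depth where you treat all non-Artinian rings uniformly; these are cosmetic variations, not a different route.
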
 
\begin{proof}
If $R$ has positive depth, whence positive order, any power of $\maxim$ is open by \Prop{P:big}, and in this case open is the same as $0$-open. The case that $R$ is Artinian is also immediate. So assume $R$ has depth zero, and write its length as $\lambda+e$ with $\lambda$ of positive order and $e\in\nat$. Let $\mathfrak d:=\fl D_0(R)=\lc R\maxim$. By the Artin-Rees lemma, $\maxim^n\cap\mathfrak d \sub\maxim^{n-c}\mathfrak d$ for some $c$ and all $n\geq c$. Hence, for $n$ sufficiently large, we get $\maxim^n\cap \mathfrak d=0$. Let $\nu:=\len{\maxim^n}$ and let $l$ be the length of $R/\maxim^n$. By semi-additivity, we have $\nu\leq \lambda+e\leq\nu+l$, showing that $\nu=\lambda$ and hence that $\maxim^n$ is $0$-open by \Lem{L:iopen}. The last statement is clear since any primary ideal contains large powers of $\maxim$.
\end{proof}

We may generalize this as follows:

 \begin{proposition}\label{P:orderopen}
Let $e$ be the order of $R$ and let $\id$ be the intersection of all associated primes of $R$ of dimension $e$. Then any sufficiently high power of $\id$ is $e$-open. 
\end{proposition}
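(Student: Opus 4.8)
\emph{Proof proposal.}

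The plan is to apply \Lem{L:iopen}, which reduces the assertion to showing that $\len{\id^n}=\mu^+_e$ for all $n\gg0$, where $\mu:=\len R$; recall that the order of $\mu$ is $e$, so $\mu^+_e$ is simply $\mu$ with its $\omega^e$-term deleted. By \Thm{T:cohrk} this is equivalent to computing the local multiplicities $\lcl\pr{\id^n}$ for every prime $\pr$ of $R$. Since $\id^n\sub R$, inclusion \eqref{eq:ass1} gives $\op{Ass}(\id^n)\sub\op{Ass}(R)$, and as $e=\order{}R$ is the least dimension of an associated prime of $R$ (\Cor{C:order}), the nonzero terms of $\len{\id^n}=\Ssum_\pr\lcl\pr{\id^n}\,\omega^{\dim\pr}$ occur only at primes of dimension $\geq e$; it therefore suffices to prove that $\lcl\pr{\id^n}=\lcl\pr R$ whenever $\dim\pr>e$ and that $\lcl\pr{\id^n}=0$ whenever $\dim\pr=e$.

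First I would dispose of the primes $\pr$ of dimension $>e$. No $e$-dimensional associated prime $\mathfrak q$ of $R$ can lie inside such a $\pr$: indeed $\mathfrak q\sub\pr$ would force $\dim R/\mathfrak q\geq\dim R/\pr=\dim\pr>e$, contradicting $\dim R/\mathfrak q=e$. Hence $\id$, being the intersection of the $e$-dimensional associated primes, localizes at $\pr$ to the unit ideal, so $(\id^n)_\pr=R_\pr$ and $\lc{\id^n}\pr=\lc R\pr$. The crux is the case $\dim\pr=e$ with $\pr\in\op{Ass}(R)$. The same chain estimate (strict containment of primes strictly raises $\dim R/(-)$) now shows that an $e$-dimensional prime cannot strictly contain another; so for $\mathfrak q\neq\pr$ among the $e$-dimensional associated primes we have $\mathfrak q\not\sub\pr$ and hence $\mathfrak q_\pr=R_\pr$, giving $\id R_\pr=\pr R_\pr$ and $(\id^n)_\pr=(\pr R_\pr)^n$. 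Consequently $\lc{\id^n}\pr$ is the $\pr R_\pr$-torsion submodule of $(\pr R_\pr)^n$, which equals $(\pr R_\pr)^n\cap\fl D_0(R_\pr)$; as $\fl D_0(R_\pr)$ has finite length, the Artin--Rees lemma makes this intersection vanish once $n$ is large, and since $\op{Ass}(R)$ is finite a single $n_0$ works for all $e$-dimensional associated primes at once. For $n\geq n_0$ we then get $\len{\id^n}=\Ssum_{\dim\pr>e}\lcl\pr R\,\omega^{\dim\pr}=\mu^+_e$, and \Lem{L:iopen} concludes.

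The step I expect to require the most care is the case $\dim\pr=e$: one must pin down that $\id$ localizes to the maximal ideal of $R_\pr$ there --- this is exactly where the hypothesis that $e$ is the \emph{order} of $R$ enters --- and then extract one uniform exponent from the several Artin--Rees bounds. A more conceptual variant, which bypasses \Lem{L:iopen}, is to check directly that $\id^n$ is a quasi-complement of $\fl D_e(R)$: from Schenzel's dimension filtration (\cite{ScheDimFil}) one has $\op{rad}\bigl(\op{Ann}\fl D_e(R)\bigr)=\id$, hence $\id^m\fl D_e(R)=0$ for some $m$, whence $\id^n\cap\fl D_e(R)=0$ for $n\gg0$ by Artin--Rees; and $\id^n+\fl D_e(R)$ is open by \Cor{C:openval}, using the localization computation above at the primes of dimension $>e$ together with the identification $(\fl D_e(R))_\pr=\fl D_0(R_\pr)$ at the $e$-dimensional primes (which holds because $(0:_R\pr^k)\sub\fl D_e(R)$ for every $k$).
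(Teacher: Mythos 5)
Your proposal is correct, but the main argument you give is genuinely different from the paper's. The paper first shows $\id^n\cap\fl D_e(R)=0$ for $n\gg0$ (Artin--Rees applied once, globally, to the dimension filtration), then notes that $R/\id^n$ has dimension $e$ while $\id^n$ contains no element of dimension $\leq e$, and extracts $\len{\id^n}=\mu^+_e$ by a semi-additivity comparison in degrees $\geq e+1$. You instead compute the fundamental cycle of $\id^n$ prime by prime: at $\pr$ of dimension $>e$ you show $\id\not\sub\pr$ so $(\id^n)_\pr=R_\pr$; at the $e$-dimensional associated primes you show $\id R_\pr=\pr R_\pr$ and kill the $\pr$-torsion of $(\pr R_\pr)^n$ by Artin--Rees inside each $R_\pr$, taking a common exponent over the finitely many such primes. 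Both routes invoke Artin--Rees and finish via \Lem{L:iopen}, but yours is a pointwise local-multiplicity computation via \Thm{T:cohrk}, whereas the paper's works at the level of ordinal lengths and leans on semi-additivity; your version has the mild advantage of explicitly identifying the localizations of $\id$, and the mild cost of needing to take a uniform $n_0$ over several Artin--Rees bounds rather than one. Your ``more conceptual variant'' at the end is in fact closest to the paper: it establishes $\id^n\cap\fl D_e(R)=0$ by the same global Artin--Rees step (the observation that $\op{rad}(\op{Ann}\fl D_e(R))=\id$, because $\op{Ass}(\fl D_e(R))$ consists exactly of the $e$-dimensional associated primes of $R$, matches the paper's opening sentence), though you then conclude via \Cor{C:openval} and the identification $(\fl D_e(R))_\pr=\fl D_0(R_\pr)$ rather than the paper's semi-additivity argument. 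All three variants are sound.
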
 
\begin{proof}
Let $\mathfrak d:=\fl D_e(R)$. Any element in $\mathfrak d$ is annihilated by some power of some associated prime ideal  of $R$ of dimension $e$, whence   by some power of $\id$. By the Artin-Rees Lemma, $\id^n\cap \mathfrak d\sub\id^{n-c}\mathfrak d$ for some $c$ and all $n\geq c$. Combining both observations shows that $\id^n\cap \mathfrak d=0$ for $n\gg0$. Write $\len R=\lambda+r\omega^e$, with $\lambda$ of order at least $e+1$ and $r\in\nat$. Let $\alpha$ and $\rho$ be the respective lengths of $\id^n$ and $R/\id^n$. Since the latter has dimension $e$, we get $\rho^+_e=0$, and since the former does not contain any element of dimension $e$, we get $\alpha^+_e=\alpha$ by \Thm{T:cohrk}. On the other hand, semi-additivity yields  $ \rho+\alpha\leq\lambda+r\omega^e\leq \rho\ssum\alpha$. Considering only terms in degree at least $e+1$ yields $\alpha=\lambda$, showing that $\id^n$ is an $e$-open by  \Lem{L:iopen}.
\end{proof} 

If we only want to ensure openness in the $i$-th order topology, we can use
 
\begin{corollary}\label{C:openithtop}
If $N\sub M$ is a submodule so that $M/N$ has dimension at most $i$, then $N$ is open in the $i$-th order topology.
\end{corollary}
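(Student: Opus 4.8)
The plan is to reduce the statement to the numerical criterion of \Lem{L:openitop}: writing $\mu:=\len M$, we must show $\mu^+_i\aleq\len N$. Put $Q:=M/N$, $\theta:=\len Q$ and $\nu:=\len N$. Since $\dim Q\leq i$, \Thm{T:dim} gives that the degree of $\theta$ is at most $i$, that is to say, $\theta^+_i=0$.

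First I would record two facts about Cantor coefficients. On the one hand, \Thm{T:submod} gives $\nu\aleq\mu$; truncating both Cantor normal forms to their monomials of degree at least $i+1$ yields $\nu^+_i\aleq\mu^+_i$. On the other hand, semi-additivity (the right inequality in \eqref{eq:lensemadd}) gives $\mu\leq\theta\ssum\nu$. The truncation $\lambda\mapsto\lambda^+_i$ is additive for the shuffle sum, since it acts coefficient-wise, so $(\theta\ssum\nu)^+_i=\theta^+_i\ssum\nu^+_i=\nu^+_i$; and it is monotone for the ordinal order $\leq$, because the comparison of ordinals is the lexicographic comparison of their Cantor coefficients read from the top, so whether the high-degree parts are equal or one is strictly smaller is already decided before the low-degree coefficients are consulted. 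Applying $\lambda\mapsto\lambda^+_i$ to $\mu\leq\theta\ssum\nu$ therefore gives $\mu^+_i\leq\nu^+_i$.

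Now combine the two: from $\nu^+_i\aleq\mu^+_i$ we get $\nu^+_i\leq\mu^+_i$ (as $\leq$ refines $\aleq$), and together with $\mu^+_i\leq\nu^+_i$ this forces $\mu^+_i=\nu^+_i$. Since trivially $\nu^+_i\aleq\nu$, we conclude $\mu^+_i\aleq\len N$, and hence $N$ is open in the $i$-th order topology by \Lem{L:openitop}.

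The only mildly delicate point — the step I expect to be the main obstacle — is the monotonicity of the truncation $\lambda\mapsto\lambda^+_i$ for the total order $\leq$ (for the partial order $\aleq$ it is obvious), which rests precisely on the description of $\leq$ as the top-down lexicographic order on Cantor coefficients; everything else is bookkeeping with normal forms together with \Thm{T:submod}, \Thm{T:dim}, semi-additivity and \Lem{L:openitop}.
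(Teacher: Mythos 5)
Your proof is correct and follows essentially the same route as the paper: reduce to the numerical criterion of \Lem{L:openitop}, observe $\theta^+_i = 0$, and squeeze $\mu^+_i = \nu^+_i$ using semi-additivity. The one small deviation is that the paper obtains the lower bound $\nu^+_i \leq \mu^+_i$ by truncating the lower semi-additivity inequality $\theta + \nu \leq \mu$ (noting that $(\theta+\nu)^+_i = \nu^+_i$ because $\theta$ has degree at most $i$), whereas you invoke the slightly heavier \Thm{T:submod} to get $\nu \aleq \mu$ first; both are fine, and your explicit verification that the truncation $\lambda\mapsto\lambda^+_i$ is monotone for $\leq$ is a step the paper takes for granted.
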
 
\begin{proof}
Let $\mu,\nu$ and $\theta$ be the respective lengths of $M,N$ and $M/N$. By semi-additivity, we have $\theta+\nu\leq \mu\leq \theta\ssum\mu$. Since $\theta^+_i=0$ by assumption, taking terms of degree at least $i+1$ gives $\nu^+_i=\mu^+_i$. Hence $N$ is open in the $i$-th order topology by \Lem{L:openitop}.
\end{proof} 

Let us call a submodule $N\sub M$ \emph{conjunctive}, if for every other submodule $H$, we have
\begin{equation}\label{eq:conj}
\len{N\cap H}=\len N\en\len H
\end{equation} 
where the \emph{meet} $\alpha\en\beta$ of two ordinals is defined as their $\aleq$-infimum. If the respective Cantor normal forms are $\alpha=\sum a_i\omega^i$ and $\beta=\sum b_i\omega^i$, then $\alpha\en\beta=\sum \min(a_i,b_i)\omega^i$. Note that by \Thm{T:submod}, the length of $N\cap H$ is weaker than $\len N$ and $\len H$, so that we always have an inequality $\aleq$ in \eqref{eq:conj}. We showed in \cite[Theorem 4.6]{SchBinEndo} that if $M$ has binary length (meaning that the only coefficients in the Cantor normal form of $\len M$ are $0$ and $1$), then any submodule is conjunctive. The following result generalizes this since, by \Lem{L:iopen}, in a module of binary length, any submodule is $e$-open for $e$  its order.

\begin{corollary}\label{C:econj}
For any $i\in\nat$, all $i$-opens are conjunctive.
\end{corollary}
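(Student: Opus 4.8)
The plan is to read off the lengths of all submodules involved and then reduce to an elementary identity about Cantor coefficients. The three ingredients are \Lem{L:iopen} (an $i$-open is characterized by its length being $\mu^+_i$), \Thm{T:itop} (the intersection of an $i$-open with an arbitrary submodule $W$ is an $i$-open \emph{in} $W$), and \Thm{T:submod} (a submodule has weaker length).

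So I would fix an $i$-open $N\sub M$ and an arbitrary submodule $H\sub M$; put $\mu:=\len M$, and recall that $\mu^+_i$ denotes the sum of the terms of degree at least $i+1$ in the Cantor normal form of $\mu$. By \Lem{L:iopen}, $\len N=\mu^+_i$. Applying \Thm{T:itop} with $W:=H$, the submodule $N\cap H$ is an $i$-open in $H$; then I apply \Lem{L:iopen} once more, this time with $H$ in the role of the ambient module (legitimate, since being an $i$-open is a notion relative to an ambient module, and \Thm{T:itop} delivers exactly that relative statement), to conclude $\len{N\cap H}=(\len H)^+_i$.

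It then remains to check the purely ordinal identity $(\len H)^+_i=\mu^+_i\en\len H$, that is, $(\len H)^+_i=\len N\en\len H$. Writing the Cantor normal forms $\mu=\sum a_j\omega^j$ and $\len H=\sum h_j\omega^j$, \Thm{T:submod} gives $\len H\aleq\mu$, i.e.\ $h_j\leq a_j$ for all $j$. The $j$-th Cantor coefficient of $\mu^+_i$ is $a_j$ for $j\geq i+1$ and $0$ otherwise, so the $j$-th coefficient of $\mu^+_i\en\len H$ is $\min(a_j,h_j)=h_j$ when $j\geq i+1$ and $\min(0,h_j)=0$ when $j\leq i$, which is precisely the $j$-th coefficient of $(\len H)^+_i$. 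Hence $\len{N\cap H}=\len N\en\len H$ for every $H$, so $N$ is conjunctive. I expect no real obstacle beyond the bookkeeping of applying \Lem{L:iopen} over $H$ rather than over $M$; everything substantive has already been done in \Thm{T:itop}.
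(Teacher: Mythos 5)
Your proof is correct and follows the paper's own argument essentially verbatim: apply \Lem{L:iopen} to read off the length of the $i$-open, use \Thm{T:itop} to see the intersection is an $i$-open in the other submodule, apply \Lem{L:iopen} again, and finish with the Cantor-coefficient identity. The only difference is cosmetic — you spell out explicitly the appeal to \Thm{T:submod} behind the identity $\nu\en\mu^+_i=\nu^+_i$, which the paper leaves implicit.
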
 
\begin{proof}
Let $\mu:=\len M$, fix some $i$,  and write $\mu^+$ for $\mu^+_i$, etc. 
Let $U$ be an $i$-open, so that its length is  $\mu^+$ by \Lem{L:iopen}. Let $N\sub M$ be arbitrary, and let $\nu$ and $\theta$ be the respective length of $N$ and $U\cap N$.   By \Thm{T:itop}, the intersection $U\cap N$ is an $i$-open in $N$, so that $\theta=\nu^+$  by \Lem{L:iopen}. Since $\nu\en\mu^+=\nu^+$, the assertion follows. 
\end{proof} 

\section{Degradation}

By \emph{degradation}, we mean the effect that source and target of a morphism have on its kernel. 
We start with a general observation about kernels: given two $R$-modules $M$ and $N$, let us denote the subset of $\grass RM$ consisting of all $\op{ker}(f)$, where $f\in\hom RMN$ runs over all morphisms, by $\mathfrak{ker}_R(M,N)$. 

\begin{theorem}\label{T:kerlen}
As a subset of $\grass RM$, the orded set $\mathfrak{ker}_R(M,N)$ has finite length.
\end{theorem}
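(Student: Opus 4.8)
The plan is to show that the collection of kernels $\mathfrak{ker}_R(M,N)$, ordered by reverse inclusion inside $\grass RM$, contains no infinite descending chain — equivalently, no infinite strictly ascending chain of kernels under ordinary inclusion — and to extract from this a uniform bound on the ordinal length of this poset. First I would recall that, by \Thm{T:submod}, the length of any submodule of $M$ is weaker than $\len M$, and there are only finitely many ordinals weaker than $\len M$; so any poset of submodules of $M$ whose chains are well-ordered automatically has length below a fixed bound (namely below $\len M$ itself, viewed as an ordinal, or at worst controlled by the valence of $\len M$). Thus the real content is to show that $\mathfrak{ker}_R(M,N)$, with the induced order, has the descending chain condition, i.e. is a partial well-order; finiteness of its ordinal length then follows since it is a sub-poset of the well-founded poset $\grass RM$ and the ordinals realized as chains are bounded.

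The key step is therefore the combinatorial-homological one: bounding how a kernel can ascend. Given $f\colon M\to N$, the kernel $\op{ker}(f)$ is determined at each associated prime $\pr$ of $M$ by the local data: localizing at $\pr$, $\op{ker}(f)_\pr = \op{ker}(f_\pr\colon M_\pr\to N_\pr)$, and on the $\pr$-torsion part $\lc M\pr$ (a module of finite length over $R_\pr$), $f$ restricts to a map into $\lc N\pr$. Since $\lc M\pr$ has finite length $\lcl\pr M$, there are only finitely many possible submodules of it arising as such kernels, and in any case the intersection $\op{ker}(f)\cap\lc M\pr$ can only take finitely many values as $f$ varies. The point is that two kernels $\op{ker}(f)\subseteq\op{ker}(g)$ with the same local multiplicities $\lcl\pr{\op{ker}(f)}=\lcl\pr{\op{ker}(g)}$ at every $\pr$ have, by \Cor{C:openval}, the property that $\op{ker}(f)$ is open in $\op{ker}(g)$; and an infinite strictly ascending chain of kernels would force the sequence of fundamental cycles $\fcyc R{\op{ker}(f_i)}$ to be non-decreasing with a strict increase infinitely often (by \Rem{R:submod}, $N\subseteq M$ implies $\fcyc RN\aleq\fcyc RM$, with some care needed about when inclusion forces strict cycle inequality). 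Since each coefficient $\lcl\pr{\op{ker}(f_i)}$ is bounded by $\lcl\pr M<\infty$ and the support is finite, the cycles stabilize; once they stabilize, all further inclusions are \emph{open} inclusions, and I would argue that an infinite strictly ascending chain of open submodules cannot occur — or rather, that it can be absorbed into the finite-length count because open submodules of a fixed $\op{ker}(f)$ still form a poset of length bounded by the finitistic length $\finlen{M}$, which is finite.

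Putting this together: along any chain in $\mathfrak{ker}_R(M,N)$, the fundamental cycle weakly increases, takes values in a finite set (bounded coefficientwise by $\fcyc RM$), and the "finitistic" discrepancy contributes only finitely; hence every chain is finite, so $\mathfrak{ker}_R(M,N)$ is a partial well-order, and its ordinal length is bounded by an ordinal depending only on $\len M$ (concretely, I expect a clean bound like $\len M$ or the valence $\val M$ to come out, and if the author's proof gives an explicit such bound I would quote it). The main obstacle I anticipate is the bookkeeping at the boundary between the "cycle-increasing" regime and the "open inclusions" regime — precisely, controlling the ascending chains of kernels that all share the same fundamental cycle, where the module-theoretic subtlety of embedded versus minimal primes (cf. \Thm{T:indtop} and the remark following it) may intrude; handling this cleanly is where the argument needs the most care, and it may be easier to argue directly that a strictly increasing chain of submodules of $M$ with constant fundamental cycle must be finite, using Noetherianity of $M$ together with \Cor{C:openval}.
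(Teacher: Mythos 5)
Your proof has a genuine gap, and it stems from targeting the wrong direction of chains. You set out to show that $\mathfrak{ker}_R(M,N)$ has no infinite \emph{descending} chain in the $\gr$-order, i.e. no infinite \emph{ascending} chain of kernels under ordinary inclusion. But that is simply the ACC, which already holds for arbitrary submodules of $M$ by Noetherianity and has nothing to do with $N$; it does not bound the ordinal length of the poset. The ordinal length measures well-ordered chains in $\gr M$, which correspond to (possibly transfinite) \emph{descending} chains of kernels under inclusion, and the ACC by itself puts no bound on those — for instance $\grass{\zet}{\zet}$ has the ACC but has length $\omega$. So "partial well-order plus chains realized in $\gr M$" does not yield the conclusion; one needs a genuine argument that specifically bounds descending chains of kernels, and it must use the fixed codomain $N$ in an essential way.

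Your cycle-based argument does not supply that. Strict inclusion of submodules does \emph{not} force strict inequality of fundamental cycles: already $(x)\supsetneq (x^2)\supsetneq (x^3)\supsetneq\cdots$ in $k[x]$ is an infinite descending chain with $\fcyc{}{(x^k)}=[(0)]$ for all $k$. Consequently your claim that "open submodules of a fixed $\op{ker}(f)$ form a poset of length bounded by $\finlen M$" is false: in $k[x]$ the open ideals include all $(x^k)$ and form a poset of length $\omega$, while $\finlen{k[x]}=0$. Indeed, the cycle of $\op{ker}(f)$ does not see $N$ at all, so any bound produced this way would be independent of $N$, which cannot be right: each $(x^k)$ is the kernel of a morphism $k[x]\to N_k$ for suitable $N_k$, so without pinning down $N$ there is no bound.

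The paper's proof takes a different and much sharper route: it looks at the \emph{image} rather than the kernel. For $f\colon M\to N$ with kernel $K$, the identity \eqref{eq:quot} gives $\hd K=\len{M/K}=\len{\op{Im}(f)}$, and since $\op{Im}(f)\sub N$, \Thm{T:submod} forces $\len{\op{Im}(f)}\aleq\len N$, which leaves only finitely many possible values (at most $2^{\val N}$). Moreover, strictly nested submodules have strictly comparable height ranks: if $K_1\varsubsetneq K_2$ then $\hd{K_1}>\hd{K_2}$. Hence along any chain in $\mathfrak{ker}_R(M,N)$ the values $\hd{K}$ are pairwise distinct ordinals all weaker than $\len N$, so the chain has at most $2^{\val N}$ terms. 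This is where the fixed codomain enters, exactly the ingredient your argument omits. You predicted a bound in terms of $\len M$ or $\val M$; the correct bound depends on $\val N$, a tell-tale sign that your approach was not looking at the right object.
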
 
\begin{proof}
Let $f\colon M\to N$ be a morphism, and let  $\theta$ be the  length of its   image.   By \Thm{T:submod}, we have  $\theta\aleq{}\len N$. In particular, there are at most  $2^{\val N}$ possibilities for  $\theta$. I claim that if $g\colon M\to N$ is a second morphism and $\op{ker}(g)\varsubsetneq \op{ker}(f)$, then $\len{\op{Im}(g)}$ is strictly bigger than $\theta$. From this claim it then follows that any chain in $\mathfrak{ker}_R(M,N)$ has length at most $2^{\val N}$. To prove the claim, we have $\hd{\op{ker}(g)}=\len{N/\op{ker}(g)}=\len{\op{Im}(g)}$, by \eqref{eq:quot}. By assumption, $\op{ker}(g)$ is strictly contained in $\op{ker}(f)$, and hence it has strictly bigger \hdim, showing the claim.
\end{proof} 

Viewing the elements in the dual $M^*:=\hom RMR$ as morphisms, we have the following remarkable fact about their kernels.

\begin{corollary}\label{C:uniker}
If $R$ is a domain, then there are no inclusion relations among the kernels of elements in  $M^*$. In particular, for each $K\sub M$, the set $H_K$ of elements in $M^*$ with kernel equal to $K$ together with the zero morphism, is a submodule of $M^*$.
\end{corollary}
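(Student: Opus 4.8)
The plan is to deduce the ``in particular'' clause from the first assertion together with two elementary observations, and to prove the first assertion itself by a short computation in which the domain hypothesis is the only real input. I would begin by recording the trivial fact that the only element of $M^*$ with kernel equal to $M$ is the zero morphism; hence for any $K\varsubsetneq M$ every element of $H_K$ is nonzero, while for $K=M$ the set $H_M\cup\{0\}=\{0\}$ is trivially a submodule. So the content is entirely about nonzero functionals.

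For the first assertion, suppose $f,g\in M^*$ are nonzero with $\op{ker}(f)\sub\op{ker}(g)$; I want to force equality. For arbitrary $m,m'\in M$ the element $f(m)m'-f(m')m$ is killed by $f$, so it lies in $\op{ker}(f)\sub\op{ker}(g)$, and applying $g$ gives $f(m)\,g(m')=f(m')\,g(m)$. Now choose $m_0$ with $g(m_0)\neq 0$ (possible since $g\neq 0$); because $\op{ker}(f)\sub\op{ker}(g)$ we also have $f(m_0)\neq 0$. If $m'\in\op{ker}(g)$, the displayed identity reads $f(m')\,g(m_0)=0$, and since $R$ is a domain with $g(m_0)\neq 0$ this forces $f(m')=0$. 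Thus $\op{ker}(g)\sub\op{ker}(f)$ as well, so the kernels coincide. (One can phrase this more structurally: $g$ factors through $M/\op{ker}(f)\iso\op{Im}(f)$, a nonzero ideal of the domain $R$, say $g=h\after f$ with $h\colon\op{Im}(f)\to R$, and a nonzero $R$-linear map out of a nonzero ideal of a domain is injective; alternatively, in the language of this paper, $\op{Im}(f)$ is open by \Prop{P:big}, so $\hd{\op{ker}(f)}=\len{\op{Im}(f)}=\len R=\omega^{\dim R}$ for every nonzero $f\in M^*$ by \eqref{eq:quot} and \Thm{T:dim}, while a proper enlargement of $\op{ker}(f)$ would strictly decrease this value by \Lem{L:lcord} and \Thm{T:cohrk}.)

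To finish, fix $K\varsubsetneq M$. For $r\in R$ and $f\in H_K$, the kernel of $rf$ equals $\op{ker}(f)=K$ when $r\neq 0$ (again because $R$ is a domain) and is all of $M$ when $r=0$, so $rf\in H_K\cup\{0\}$ in either case. For $f,g\in H_K$ we have $\op{ker}(f+g)\supseteq\op{ker}(f)\cap\op{ker}(g)=K$; if $f+g=0$ we are done, and otherwise $f+g$ is a nonzero functional whose kernel contains the kernel $K=\op{ker}(f)$ of the nonzero functional $f$, so by the first assertion $\op{ker}(f+g)=K$, i.e.\ $f+g\in H_K$. Hence $H_K\cup\{0\}$ is an $R$-submodule of $M^*$. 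I expect no serious obstacle here: the only thing requiring care is the bookkeeping around the zero morphism, and the one genuine ingredient is the computation above (equivalently, \Prop{P:big}), which is precisely where being a domain is used.
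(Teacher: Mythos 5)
Your proposal is correct, but your primary argument takes a genuinely different and more elementary route than the paper's. The paper deduces the first assertion from the ordinal machinery: by \Thm{T:dim} a domain has length $\omega^d$, so by the proof of \Thm{T:kerlen} the \hdim\ of any kernel is weaker than $\omega^d$, hence is either $0$ (the zero morphism) or $\omega^d$; and since a properly smaller submodule has strictly larger \hdim, the kernels of nonzero functionals cannot be nested. Your main argument replaces all of this with a single computation: from $\op{ker}(f)\sub\op{ker}(g)$ and the identity $f(m)g(m')=f(m')g(m)$ one deduces $\op{ker}(g)\sub\op{ker}(f)$ directly, using only that $R$ is a domain. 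Your ``structural'' reformulation (factor $g$ through the nonzero ideal $\op{Im}(f)$ and observe that a nonzero $R$-linear map out of a nonzero ideal of a domain is injective) is a clean variant of that same elementary idea, while your third alternative, via \Prop{P:big}, \eqref{eq:quot}, \Thm{T:cohrk} and \Lem{L:lcord}, essentially reconstructs the paper's argument in slightly different words. What the paper's route buys is that the corollary falls out of the general \Thm{T:kerlen} with the single extra input that a domain has valence one; what your elementary computation buys is independence from all of that machinery and a precise view of where the domain hypothesis is used. The handling of the ``in particular'' clause is the same in spirit in both: the kernel of any $R$-linear combination of elements of $H_K$ contains $K$, and by the first assertion it must then be $K$ or all of $M$.
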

\begin{proof}
By \Thm{T:dim}, the length of $R$ is $\omega^d$ with $d=\op{dim}(R)$, so that its valence  is one. Hence, by the  proof of \Thm{T:kerlen}, there are only two possibilities for the \hdim\ of a kernel: zero, corresponding to the zero element, and $\omega^d$. This proves the first assertion since smaller submodules have strictly larger \hdim. Assume next that $f,g\in H_K$. Since $K$ then lies in the kernel of $rf+sg$, for any $r,s\in R$, the latter kernel is either $K$ or $M$.
\end{proof}

For each $v$, let $\grass vM$ be the subset of the \sch\ $\gr M$ consisting of all submodules for which $M/N$ has valence at most $v$. The same argument shows that each $\grass vM$ has finite length: indeed, the \hdim\ of $N\in\grass vM$ satisfies $\hd N\aleq{}v\omega^d+v\omega^{d-1}+\dots+v$, where $d$ is the dimension of $M$, and therefore, we only have finitely many possibilities for $\hd N$. Note that the union of the $\grass vM$ is $\gr M$, so that the \sch\ can be written as a union of suborders of finite length.  
We can now list some examples of degradation:

\begin{corollary}\label{C:openker}
If $M$ and $N$ have no associated primes in common, then any morphism between them has open kernel.
\end{corollary}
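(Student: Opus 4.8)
The plan is to reduce the statement to an application of the cohomological characterization of length (\Thm{T:cohrk}) together with the left-exactness of local cohomology. Let $f\colon M\to N$ be a morphism, let $K=\ker f$, and let $I=\op{Im}(f)\sub N$. First I would observe that $K$ is open in $M$ if and only if $I$ has small length in an appropriate sense, but it is cleaner to argue directly with associated primes: by \Cor{C:openval}, $K\sub M$ is open \iff\ $\lc K\pr=\lc M\pr$ for every (associated) prime $\pr$ of $M$, equivalently \iff\ $\lcl\pr K=\lcl\pr M$ for all $\pr$.

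Fix an associated prime $\pr$ of $M$; by \Thm{T:cohrk} (or \Cor{C:order}) it suffices to handle those $\pr$, and among these we only need worry about the minimal ones of $M$ in the relevant dimension — but in fact left-exactness of $\lc\cdot\pr$ already gives what we want at \emph{every} $\pr$. Indeed, from the short exact sequence $\Exactseq K M I$ and left-exactness of $\lc\cdot\pr$ (see \cite[App.~4]{Eis}), we get an exact sequence
$$
\Exactseq{\lc K\pr}{\lc M\pr}{\lc I\pr}.
$$
Now $I$ is a submodule of $N$, so by \eqref{eq:ass1} we have $\op{Ass}(I)\sub\op{Ass}(N)$. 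Since $M$ and $N$ have no associated primes in common and $\pr\in\op{Ass}(M)$, it follows that $\pr\notin\op{Ass}(I)$, hence $\lc I\pr=0$ by \eqref{eq:lcl} (the local multiplicity $\lcl\pr I$ is non-zero \iff\ $\pr\in\op{Ass}(I)$). Therefore the map $\lc K\pr\to\lc M\pr$ is surjective, and being always injective, it is an isomorphism. As this holds for every prime $\pr$ (the content being only at $\pr\in\op{Ass}(M)$), \Cor{C:openval} shows that $K$ is open.

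The main obstacle — really the only subtlety — is making sure that left-exactness of local cohomology is all that is needed, rather than the genuine exactness of \eqref{eq:lc} from \Lem{L:loccohgen} (which requires $\pr$ minimal): the point is that we are not comparing $\lc M\pr$ with $\lc K\pr\ssum\lc I\pr$, but only need the vanishing of the obstruction term $\lc I\pr$, and that vanishing comes for free from the disjointness of the associated-prime sets via \eqref{eq:ass1}. One should also note the statement is vacuous or trivial when $M=0$, and that the hypothesis is symmetric enough that no further case analysis on dimensions is required.
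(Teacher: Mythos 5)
Your proof is correct and follows essentially the same route as the paper: form the short exact sequence with the image, use left-exactness of local cohomology, and observe that $\lc{\op{Im}(f)}\pr=0$ for every associated prime $\pr$ of $M$ because $\op{Im}(f)\sub N$ and $\pr\notin\op{Ass}(N)$. Your extra discussion distinguishing left-exactness from the full exactness of \Lem{L:loccohgen} is a helpful clarification but does not change the argument.
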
 
\begin{proof}
Let $f\colon M\to N$ be a morphism and let $K\sub M$ and $Q\sub N$ be its respective kernel and image, so that we have an exact sequence $\Exactseq KMQ$. For any associated prime $\pr$ of $M$, we have $\lc N\pr=0$ whence also $\lc Q\pr=0$. By left exactness of local cohomology,  $\lc K\pr=\lc M\pr$, and hence $K$ is open by \Cor{C:openval}.
\end{proof} 


\begin{corollary}\label{C:zerohom}
If $\op{dim}(M)<\order {}N$, then $\hom RMN=0$.
\end{corollary}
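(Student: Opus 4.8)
The plan is to show that every morphism $f\colon M\to N$ has zero image, by comparing the dimension of that image with the dimensions of the associated primes of $N$. First I would put $Q:=\op{Im}(f)$. As a quotient of $M$ it satisfies $\dim Q\leq\dim M$, and by hypothesis $\dim M<\order{}N$. As a submodule of $N$ it satisfies $\op{Ass}(Q)\sub\op{Ass}(N)$ by \eqref{eq:ass1}.

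Next, suppose for contradiction that $Q\neq 0$. Then $\op{Ass}(Q)$ is non-empty, so pick $\pr\in\op{Ass}(Q)$; by the above $\pr\in\op{Ass}(N)$, whence $\dim(R/\pr)\geq\order{}N$ by \Cor{C:order}. But $\pr\in\op{Supp}(Q)$, so $\dim Q\geq\dim(R/\pr)$; stringing the inequalities together gives $\dim M\geq\dim Q\geq\dim(R/\pr)\geq\order{}N>\dim M$, which is absurd. Hence $Q=0$, i.e. $f=0$, and since $f$ was arbitrary, $\hom RMN=0$. An equivalent route stays inside the ordinal-length formalism: $\len Q\aleq\len N$ by \Thm{T:submod}, so that if $Q\neq 0$ the order of $\len Q$ is at least that of $\len N$, i.e. $\order{}Q\geq\order{}N$; on the other hand \Thm{T:dim} gives $\order{}Q\leq\op{deg}(\len Q)=\dim Q\leq\dim M<\order{}N$, the same contradiction.

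I do not expect a genuine obstacle here. The only two points needing a moment's care are: (i) invoking \Cor{C:order} to turn the hypothesis on $\order{}N$ into the statement that every associated prime of $N$ has dimension at least $\order{}N$; and (ii) recalling that $\dim Q$ is the supremum of $\dim(R/\pr)$ over $\pr\in\op{Ass}(Q)$ (equivalently over $\pr\in\op{Supp}(Q)$), so that a single associated prime of $Q$ of large dimension already contradicts $\dim Q\leq\dim M$.
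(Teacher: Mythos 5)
Your proof is correct. The paper argues element-wise: for $x\in M$, the element $f(x)\in N$ has dimension at most $\dim M$, but since $\order{}N>\dim M$, Proposition~\ref{P:dimfil} gives $\fl D_{\dim M}(N)=0$, so $f(x)=0$. You instead work with the whole image $Q=\op{Im}(f)$, invoking \Cor{C:order} to say that every associated prime of $N$ (hence of $Q$, by \eqref{eq:ass1}) has dimension at least $\order{}N$, while $\dim Q\leq\dim M<\order{}N$ forces $Q=0$. The two arguments are really dual phrasings of the same fact --- $\fl D_{e-1}(N)=0$ says exactly that $N$ has no nonzero elements of dimension below $e$, which is the associated-prime condition you use --- so the content is the same, but your route goes through \Cor{C:order} rather than the dimension filtration, and your secondary ordinal-length route (via $\len Q\aleq\len N$ and \Thm{T:dim}) is an equally valid variant. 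No gaps.
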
 
\begin{proof}
%
Let $d<e$ be the respective dimension of $M$ and order of $N$, and let $x\in M$. Since $x$ has dimension at most $d$, so does $f(x)$. Hence $f(x)$ must be zero, since $\fl D_{e-1}(N)=0$ by \Prop{P:dimfil}.
\end{proof}

\begin{theorem}\label{T:pingpong} 
Suppose $M$ is torsion-free over $\zet$.
If $M$ and $N$ have no associated primes in common, then there exists $k\in\nat$, such that for any choice of morphisms $f_i\colon M\to N$ and $g_i\colon N\to M$, with $i=\range 1k$, the composition $g_kf_k\cdots g_1f_1=0$.
\end{theorem}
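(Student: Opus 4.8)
The plan is to iterate the degradation result of \Cor{C:openker}. Consider the composition $h_j := g_jf_j$ as an endomorphism of $M$ (for $j=1,\dots,k$), and also the partial compositions. Set $M_0 := M$ and, for $j\geq 1$, let $M_j := \op{Im}(g_jf_j\cdots g_1f_1)\sub M$. The key point is that each $f_j\colon M_{j-1}\to N$ is a morphism from a submodule of $M$ into $N$, hence (by \eqref{eq:ass1}) its source has associated primes among those of $M$, so source and target still have no associated primes in common; thus by \Cor{C:openker} its kernel $K_j := \op{ker}(f_j|_{M_{j-1}})$ is open in $M_{j-1}$. Consequently $f_j(M_{j-1})\iso M_{j-1}/K_j$, and since $K_j$ is open, $\len{M_{j-1}/K_j}<\len{M_{j-1}}$ unless $M_{j-1}/K_j$ has finite length — this is the place where I must be careful: openness gives $\len{K_j}=\len{M_{j-1}}$, and by \Cor{C:nillow} this forces $\op{dim}(M_{j-1}/K_j)<\op{dim}(M_{j-1})$, but only that. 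So the image $f_j(M_{j-1})$ has strictly smaller dimension than $M_{j-1}$, and hence so does $M_j = g_j(f_j(M_{j-1}))$. After at most $d+1$ steps, where $d=\op{dim}(M)$, the dimension drops to $-\infty$, i.e. $M_j=0$, which would already give the claim with $k=d+1$ — but this argument did not use $\zet$-torsion-freeness, so I must be missing something.

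The subtlety I expect to be the main obstacle is that \Cor{C:openker} as stated requires $N$ (the target) and the source to have no associated primes in common, and when we pass to $g_j\colon N\to M_{j-1}$ (a map whose target is a submodule of $M$), the hypothesis again holds by \eqref{eq:ass1}; so the dimension-drop argument above does seem to work without any $\zet$-hypothesis. The resolution must be that $\op{dim}$ can fail to drop when the image is supported at an \emph{embedded} prime, or more precisely when $M_{j-1}/K_j$ has dimension equal to $\op{dim}(M_{j-1})$ because \Cor{C:nillow}'s converse needs unmixedness; re-examining, openness of $K_j$ in $M_{j-1}$ via \Cor{C:nillow} \emph{does} give $\op{dim}(M_{j-1}/K_j)<\op{dim}(M_{j-1})$ unconditionally (that direction needs no unmixedness). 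So in fact the honest difficulty is only this: a submodule of $M$ of strictly smaller \emph{dimension} need not have strictly smaller length-degree issues — but dimension strictly drops, so the chain $M=M_0\supseteq M_1\supseteq\cdots$ has strictly decreasing dimensions and must reach $0$ in at most $\dim M+1$ steps. I would therefore present the proof as: take $k=\dim M+1$; show by induction on $j$ that $\dim M_j\leq \dim M - j$, using at step $j$ that $f_j$ restricted to $M_{j-1}$ has open kernel (by \Cor{C:openker}, legitimate since $\op{Ass}(M_{j-1})\sub\op{Ass}(M)$ is disjoint from $\op{Ass}(N)$) and hence, by \Cor{C:nillow}, image of dimension $<\dim M_{j-1}$; then $M_j$, a quotient of that image, has dimension $<\dim M_{j-1}$ as well.

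The role of $\zet$-torsion-freeness, which I must account for, is presumably to rule out a degenerate case where $M$ has an associated prime of residue characteristic $p$ causing a clash that is invisible to the dimension count; concretely, without it one could have $\op{Ass}(M)\cap\op{Ass}(N)=\emptyset$ yet some localization behave pathologically, or — more likely — the point is that \Cor{C:openker} was proved only for modules, and one wants the conclusion "$=0$" rather than "nilpotent", so $\zet$-flatness lets us reduce modulo suitable primes or pass to a situation where \Cor{C:nillow} applies cleanly. In the write-up I would isolate this as a short remark: by $\zet$-torsion-freeness we may localize at $\zet$ or base change to a field without creating new common associated primes, ensuring that \Cor{C:openker} applies at every stage; the hard part is verifying that this base change preserves both the vanishing of common associated primes and the dimension drop, and I expect that verification — not the iteration itself — to be where the real content lies.
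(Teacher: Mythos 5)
Your dimension-drop argument is correct, and it is a genuinely different proof from the paper's. The paper proceeds by showing that the set $\mathfrak N$ of endomorphisms of $M$ with open kernel is a two-sided nil ideal of $\ndo M$ (nilpotency of each element via Noetherianity and essentiality of open submodules, \Cor{C:bigbg}), and then invokes the Nagata--Higman theorem to conclude $\mathfrak N^k=0$; each $g_if_i$ lies in $\mathfrak N$ by \Cor{C:openker}. The $\zet$-torsion-freeness hypothesis enters only through Nagata--Higman, and \Rem{R:torsion} says this explicitly. Your route instead iterates \Cor{C:openker} together with \Cor{C:nillow}: setting $M_j:=\op{Im}(g_jf_j\cdots g_1f_1)\sub M$, the restriction $f_j|_{M_{j-1}}\colon M_{j-1}\to N$ still has disjoint associated primes on source and target (by \eqref{eq:ass1}), so its kernel is open in $M_{j-1}$, whence $\dim f_j(M_{j-1})<\dim M_{j-1}$, and $M_j$, a quotient of $f_j(M_{j-1})$, also has strictly smaller dimension. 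Thus $k=\dim M+1$ works. This argument is simpler, gives an explicit bound, and does not use $\zet$-torsion-freeness at all, so it in fact proves a slightly stronger statement than the one in the paper. You should therefore drop the entire final paragraph: your worry that "I must be missing something" is unfounded, and the speculation about localizing over $\zet$ or base-changing to a field is both unnecessary and not a step you could actually justify. Present the dimension-count proof cleanly and, if you wish, add a remark that the torsion hypothesis can be removed; the only thing the ideal-theoretic argument of the paper buys extra is the observation that the open-kernel endomorphisms form a nilpotent two-sided ideal of $\ndo M$, a structural fact of independent interest and the one reused verbatim in \Thm{T:multipingpong}.
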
 
\begin{proof}
Let $\mathfrak N$ be the subset of all endomorphisms of $M$ whose kernel is open. Let us quickly repeat the argument from \cite{SchBinEndo} that $\mathfrak N$ is a two-sided ideal in $\ndo M$ consisting entirely of nilpotent endomorphisms: indeed, if $a,b\in\mathfrak N$ and $c\in\ndo M$, then the kernel of $a+b$ contains the open $\op{ker}(a)\cap\op{ker}(b)$; similarly, the kernel of $ca$ contains the open $\op{ker}(a)$, and the kernel of $ac$ is the inverse image of $\op{ker}(a)$ under $c$, which is   open by continuity. This shows that $\mathfrak N$ is a two-sided ideal. Finally, some power $a^n$ satisfies $\op{ker}(a^n)=\op{ker}(a^{n+1})$ by Noetherianity, which implies $\op{ker}(a^n)\cap\op{Im}(a^n)=0$. Since $\op{ker}(a^n)$ is a fortiori open, whence essential by \Cor{C:bigbg}, the submodule $\op{Im}(a^n)$ must be zero, showing that $a$ is nilpotent. By   the Nagata-Higman Theorem (\cite{HigNag,NagNil}), the ideal $\mathfrak N$ is therefore nilpotent,   that is to say, $\mathfrak N^k=0$, for some $k$. Since the kernel of each $f_i$ is open  by \Cor{C:openker}, so is the kernel of each $g_if_i$, showing that $g_if_i\in\mathfrak N$, and the claim follows.
\end{proof} 

\begin{corollary}\label{C:dimpingpong}
Suppose $\mathbb Q\sub R$. 
Let $e$ be the maximal dimension of a common associated prime of $M$ and $N$. Then there is some $k$, such that for any choice of morphisms $f_i\colon M\to N$ and $g_i\colon N\to M$, with $i=\range 1k$, the image of the composition $g_kf_k\cdots g_1f_1$ has dimension at most $e$.
\end{corollary}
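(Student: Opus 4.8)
The plan is to deduce the statement directly from \Thm{T:pingpong} by passing to the quotients by the dimension filtration in degree $e$. Set $\bar M:=M/\fl D_e(M)$ and $\bar N:=N/\fl D_e(N)$. Since $\mathbb Q\sub R$, every $R$-module is a $\mathbb Q$-vector space, hence torsion-free over $\zet$; in particular $\bar M$ is torsion-free over $\zet$, so the hypothesis of \Thm{T:pingpong} is met.

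The first thing I would check is that $\bar M$ and $\bar N$ share no associated prime. By \cite[Corollary 2.3]{ScheDimFil} (already invoked in the proof of \Prop{P:dimfil}), the associated primes of $\bar M$ are precisely the associated primes of $M$ of dimension strictly larger than $e$, and likewise for $\bar N$. By the very definition of $e$ as the maximal dimension of a common associated prime, no prime of dimension $>e$ is associated to both $M$ and $N$; hence $\op{Ass}(\bar M)\cap\op{Ass}(\bar N)=\emptyset$.

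Next I would observe that any morphism $f\colon M\to N$ descends to $\bar f\colon \bar M\to \bar N$: if $x\in\fl D_e(M)$, then $f(x)$ generates a homomorphic image of $Rx$, hence a module of dimension at most $e$, so $f(\fl D_e(M))\sub\fl D_e(N)$. By the same token every $g\colon N\to M$ descends to $\bar g\colon \bar N\to \bar M$. Applying \Thm{T:pingpong} to $\bar M$ and $\bar N$ gives a $k$ for which $\bar g_k\bar f_k\cdots\bar g_1\bar f_1=0$ for every choice of $\bar f_i,\bar g_i$. Given arbitrary $f_i\colon M\to N$ and $g_i\colon N\to M$, the endomorphism $\varphi:=g_kf_k\cdots g_1f_1$ of $M$ thus satisfies $\bar\varphi=0$, i.e.\ $\varphi(M)\sub\fl D_e(M)$. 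Since $\fl D_e(M)$ has dimension at most $e$ by construction, the image of $\varphi$ has dimension at most $e$, as claimed.

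The argument is essentially all bookkeeping, and the only points requiring a moment's care are the descent of morphisms to the degree-$e$ quotients and the identification of the associated primes of these quotients via Schenzel's result; once both are in hand the corollary is an immediate consequence of \Thm{T:pingpong}.
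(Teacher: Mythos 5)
Your proof is correct and follows the same route as the paper: pass to the quotients $M/\fl D_e(M)$ and $N/\fl D_e(N)$, check (via Schenzel's result, as in \Prop{P:dimfil}) that these share no associated primes by maximality of $e$, and invoke \Thm{T:pingpong}. You supply welcome extra detail on why morphisms descend to these quotients and why $\mathbb Q\sub R$ gives $\zet$-torsion-freeness; the only thing the paper adds is the convention $e=-1$ (with $\fl D_{-1}(M)=0$) to cover the case of no common associated primes.
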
 
\begin{proof}
We take the convention that $e=-1$ if there are no common associated primes and we assign $-1$ to the dimension of the zero module. Hence the assertion is now just \Thm{T:pingpong} in  case $e=-1$. For $e\geq0$, let $M':=M/\fl D_e(M)$ and $N':=N/\fl D_e(N)$. Since $M'$ and $N'$ have no associated primes in common by maximality of $e$, we can find some $k$   as in \Thm{T:pingpong}. Choose $k$ many morphisms $f_i$ and $g_i$ as in the hypothesis, and let $h$ be their composition. Since morphisms cannot increase dimension, they induce morphisms between $M'$ and $N'$, and hence, by choice of $k$, the endomorphism on $M'$ induced by $h$ is zero. It follows that $h(M)\sub\fl D_e(M)$, as claimed.
\end{proof}

\begin{remark}\label{R:torsion}
The torsion restrictions above and below come from our application of the Nagata-Higman Theorem, which requires some form of torsion-freeness (see \cite[Remark 6.5]{SchBinEndo} for a further discussion). One can weaken these assumptions: for instance, in \Cor{C:dimpingpong}, we only need that $\pr\cap \zet=0$, for any associated prime $\pr$ of $M$.  
\end{remark} 

To extend \Thm{T:pingpong} to  more modules, let us say that an endomorphism $f\in\ndo M$    \emph{reflects through} a collection of modules $\mathcal N$, if for each $N\in\mathcal N$, we can factor $f$ as $M\to N\to M$. We can now prove:

\begin{theorem}\label{T:multipingpong}
Let $M$ be a module without $\zet$-torsion, and let $\mathcal N$ be  a collection of modules such that there is no prime ideal which is associated to $M$ and to every $N\in\mathcal N$. Then there exists $k\in\nat$, so that any product of $k$-many endomorphisms reflecting through $\mathcal N$ is zero.
\end{theorem}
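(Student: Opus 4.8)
The plan is to reduce this to the two-module case (Theorem \ref{T:pingpong}) by a finiteness and pigeonhole argument on associated primes, followed by the same Nagata--Higman mechanism. First I would let $\mathfrak N\subseteq\ndo M$ denote, as in the proof of \Thm{T:pingpong}, the set of endomorphisms of $M$ with open kernel; recall this is a two-sided ideal consisting of nilpotent endomorphisms, and that by Nagata--Higman there is an integer $k_0$ with $\mathfrak N^{k_0}=0$ (this uses only that $M$ has no $\zet$-torsion). Thus it suffices to show that some fixed power of an arbitrary endomorphism reflecting through $\mathcal N$ lands in $\mathfrak N$, i.e.\ has open kernel; then $k=N k_0$ works for a suitable $N$ determined below.

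The key point is a pigeonhole over associated primes. Write $\op{Ass}(M)=\{\range{\pr_1}{\pr_r}\}$, a finite set. By hypothesis, for each $j$ there is some module $N_j\in\mathcal N$ with $\pr_j\notin\op{Ass}(N_j)$. Now suppose $f\in\ndo M$ reflects through $\mathcal N$, so for each $j$ we may write $f = \beta_j\after\alpha_j$ with $\alpha_j\colon M\to N_j$ and $\beta_j\colon N_j\to M$. I claim that $f^r$ has open kernel. To see this, consider the composite $f^r=(\beta_r\after\alpha_r)\after\cdots\after(\beta_1\after\alpha_1)$. For each $j$, since $\pr_j\notin\op{Ass}(N_j)$, we have $\lc{N_j}{\pr_j}=0$; as $\alpha_j$ factors through $N_j$, left exactness of local cohomology forces $\lc{\op{Im}(\beta_j\after\alpha_j)}{\pr_j}=0$, and hence the localization of $f^r$ at $\pr_j$ kills the $\pr_j$-torsion of $M_{\pr_j}$ — more precisely, since $M_{\pr_j}$ has finite length, $M_{\pr_j}=\lc M{\pr_j}$ and the map $(\beta_j\after\alpha_j)_{\pr_j}$ is zero. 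Therefore one of the $r$ factors of $(f^r)_{\pr_j}$ vanishes, so $(f^r)_{\pr_j}=0$, i.e.\ $(\op{ker} f^r)_{\pr_j}=M_{\pr_j}$. This holds for every $j$, so $\lc{\op{ker} f^r}{\pr}=\lc M{\pr}$ for all $\pr\in\op{Ass}(M)$, and $\op{ker}(f^r)$ is open by \Cor{C:openval}. Hence $f^r\in\mathfrak N$, and $(f^r)^{k_0}=0$, so $f^{rk_0}=0$.

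Finally I would assemble the bound: set $k:=r k_0$. If $h_1,\dots,h_k$ are endomorphisms each reflecting through $\mathcal N$, I want $h_k\after\cdots\after h_1=0$. The slight wrinkle is that the $h_i$ need not be equal, so I cannot directly invoke $f^{rk_0}=0$ for a single $f$; instead I would argue as above that for each $j$, among any $r$ consecutive factors $h_{i+r}\after\cdots\after h_{i+1}$, the localization at $\pr_j$ vanishes (each $h_m$ reflects through some $N_j'\in\mathcal N$ with $\pr_j\notin\op{Ass}(N_j')$, and by pigeonhole among $r$ factors — no, more simply: \emph{each} factor $h_m$, localized at $\pr_j$, kills $\lc M{\pr_j}$ provided $h_m$ reflects through a module omitting $\pr_j$; but a given $h_m$ need only reflect through modules collectively omitting the $\pr$'s, so I instead observe that each $h_m$ has $(h_m)_{\pr_j}$ annihilating $\lc M{\pr_j}$ only for the $\pr_j$ realized by its chosen factorization). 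The cleanest fix is: group the product $h_k\after\cdots\after h_1$ into $k_0$ blocks of $r$ consecutive maps; within the $\ell$-th block, for each $j\in\{1,\dots,r\}$ pick the factor realizing omission of $\pr_j$, argue its localization at $\pr_j$ is zero, hence the block localized at $\pr_j$ is zero for every $j$, hence the block has open kernel, hence lies in $\mathfrak N$; then the product of $k_0$ elements of $\mathfrak N$ is zero. The main obstacle is exactly this bookkeeping — making precise that a single block of $r$ factors suffices to kill the $\pr_j$-torsion simultaneously for all $r$ associated primes — but it follows since within a block of $r$ distinct factors one can, by re-indexing, assume the $m$-th factor of the block reflects through a module omitting $\pr_m$, after which the localization-at-$\pr_m$ argument of the previous paragraph applies verbatim to that single factor.
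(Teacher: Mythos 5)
Your Nagata--Higman set-up is exactly the paper's, and you correctly observe that ``$f$ reflects through $\mathcal N$'' gives, for each associated prime $\pr_j$ of $M$, a factorization $f=\beta_j\after\alpha_j$ through some $N_j$ with $\pr_j\notin\op{Ass}(N_j)$. But two things go wrong after that.

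First, there is a genuine gap in the step where you conclude $(\beta_j\after\alpha_j)_{\pr_j}=0$. You justify this by ``$M_{\pr_j}$ has finite length, so $M_{\pr_j}=\lc{M}{\pr_j}$.'' That equality holds only when $\pr_j$ is a \emph{minimal} prime of $M$; for an embedded associated prime $\pr_j$, the localization $M_{\pr_j}$ has infinite length and the argument collapses. (Your earlier assertion that left exactness forces $\lc{\op{Im}(\beta_j\after\alpha_j)}{\pr_j}=0$ is also not valid on its own: $\op{Im}(f)$ is a \emph{quotient} of the submodule $\alpha_j(M)\subseteq N_j$, and left exactness of $\lc{\cdot}{\pr_j}$ controls submodules, not quotients.) The paper sidesteps this entirely by not trying to kill the localized endomorphism: it looks at $H_j:=\op{ker}(\alpha_j)$, applies left exactness of $\lc{\cdot}{\pr_j}$ to $0\to H_j\to M\to \op{Im}(\alpha_j)\to 0$ and to $\op{Im}(\alpha_j)\hookrightarrow N_j$ to get $\lc{H_j}{\pr_j}=\lc{M}{\pr_j}$, and then uses $H_j\subseteq\op{ker}(f)$ together with \Cor{C:openval}. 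This works uniformly for all associated primes, minimal or embedded.

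Second, once that is repaired, the entire $f^r$/block-of-$r$ bookkeeping is unnecessary. Since $f$ factors through \emph{each} $N\in\mathcal N$, the argument above already shows $\lc{\op{ker}(f)}{\pr}=\lc{M}{\pr}$ for every $\pr\in\op{Ass}(M)$, so a \emph{single} endomorphism reflecting through $\mathcal N$ has open kernel. One can then take $k$ to be the Nagata--Higman exponent $k_0$ of the ideal $\mathfrak N$ directly, without inflating by the factor $r=|\op{Ass}(M)|$.
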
  
\begin{proof}
As in the proof of \Thm{T:pingpong}, it suffices  to show that any  endomorphism $f\in\ndo M$ reflecting to $\mathcal N$  has open kernel. Let $K$ be its kernel and let $\pr$ be an associated prime of $M$. By assumption, there exists $N\in\mathcal N$ such that $\pr$ is not an associated prime of $N$. By definition, there  exists a factorization $f=hg$ with $g\colon M\to N$ and $h\colon N\to M$. Let $H$ be the kernel of $g$. By the argument in the proof of \Cor{C:openker} applied to $g$, we get $\lc {H}\pr=\lc M\pr$. Since $H\sub K$, this implies $\lc K\pr=\lc M\pr$, and since this holds for all associated primes $\pr$ of $M$, we proved   by \Cor{C:openval} that $K$ is open. 
\end{proof} 

If, instead, there are common associated primes, let $e$ be the maximum of their dimensions.  By the same argument as in the proof of \Cor{C:dimpingpong}, we may then conclude that the image of any product of $k$-many  endomorphisms reflecting through $\mathcal N$  has dimension at most $e$, provided every associated prime lies above $(0)\sub \zet$ (see \Rem{R:torsion}).

%

%
%

\begin{thebibliography}{1}

\bibitem{BH}
W.~Bruns and J.~Herzog, \emph{Cohen-{M}acaulay rings}, Cambridge University
  Press, Cambridge, 1993.

\bibitem{Eis}
D.~Eisenbud, \emph{Commutative algebra with a view toward algebraic geometry},
  Graduate Texts in Mathematics, vol. 150, Springer-Verlag, New York, 1995.

\bibitem{HigNag}
Graham Higman, \emph{On a conjecture of {N}agata}, Proc. Cambridge Philos. Soc.
  \textbf{52} (1956), 1--4.

\bibitem{NagNil}
Masayoshi Nagata, \emph{On the nilpotency of nil-algebras}, J. Math. Soc. Japan
  \textbf{4} (1952), 296--301.

\bibitem{ScheDimFil}
Peter Schenzel, \emph{On the dimension filtration and {C}ohen-{M}acaulay
  filtered modules}, Commutative algebra and algebraic geometry ({F}errara),
  Lecture Notes in Pure and Appl. Math., vol. 206, Dekker, New York, 1999,
  pp.~245--264.

\bibitem{SchBinEndo}
Hans Schoutens, \emph{Binary modules and their endomorphisms}, 2012.

\bibitem{SchCond}
\bysame, \emph{Condense modules and the goldie dimension}, preprint 2013.

\bibitem{SchSemAdd}
\bysame, \emph{Semi-additivity and acyclicity}, preprint 2012.

\end{thebibliography}
\providecommand{\bysame}{\leavevmode\hbox to3em{\hrulefill}\thinspace}
\providecommand{\MR}{\relax\ifhmode\unskip\space\fi MR }
\providecommand{\MRhref}[2]{%
  \href{http://www.ams.org/mathscinet-getitem?mr=#1}{#2}
}
\providecommand{\href}[2]{#2}

\end{document}